\documentclass[twoside]{article}

%!!!!!!!!!!!!!!!!!!!!!!!!!!                  !!!!!!!!!!!!!!!!!!!!!!!!!!
%!!!!!!!!!!!!!!!!!!!!!!!!!!   Les packages   !!!!!!!!!!!!!!!!!!!!!!!!!!
%!!!!!!!!!!!!!!!!!!!!!!!!!!                  !!!!!!!!!!!!!!!!!!!!!!!!!!

%% Les packages à charger en standard.

%\usepackage[frenchb]{babel}     % Typo francaise.
%\FrenchItemizeSpacingfalse      % Pour avoir des \bullet dans les {itemize}
                                % et ne pas dérégler les espaces verticaux.
%\usepackage[T1]{fontenc}        % Accents codés dans la fonte.
%\usepackage[latin1]{inputenc}   % Accents 8 bits dans le fichier.
\usepackage{vmargin}            % Régler la taille de la feuille.
\usepackage{fancyhdr}           % Régler le titre courant et le bas de page.
\usepackage{calc}               % Faire des calculs sur les longueurs.
\usepackage{ifthen}             % Faire des tests if/then/else.
\usepackage{pifont}             % La police \ding.
\usepackage{supertabular}       % Les grands tableaux.
\usepackage{multicol}           % Plusieurs colonnes.
\usepackage{xspace}             % Ajuster l'espace après des mots.

%% Les packages pour les maths.

\usepackage{amsmath}    % Les symboles les plus fréquents.
\usepackage{amssymb}    % Des symboles.
\usepackage{amsfonts}   % Des fontes, eg pour \mathbb.
\usepackage{verbatim}   % Pour les codes sources en informatique.
\usepackage{mathrsfs}   % Des lettres majuscules cursives (\mathscr).
\usepackage{dsfont}

%% Les packages pour les dessins.

\usepackage{array}
\usepackage{curves}
\usepackage{epic}
\usepackage{eepic}
\usepackage{epsfig}
\usepackage{graphics}
\graphicspath{{PS/}}

%!!!!!!!!!!!!!!!!!!!!!!                          !!!!!!!!!!!!!!!!!!!!!!
%!!!!!!!!!!!!!!!!!!!!!!   Présentation globale   !!!!!!!!!!!!!!!!!!!!!!
%!!!!!!!!!!!!!!!!!!!!!!                          !!!!!!!!!!!!!!!!!!!!!!

%% Une taille standard pour le texte.

 % Pour centrer le texte sur la page (utilisation courante).
\setpapersize{A4}
\setmarginsrb{33mm}{20mm}{33mm}{25mm}{10mm}{6mm}{0mm}{10mm}

%!!!!!!!!!!!!!!!!!!!!!!                          !!!!!!!!!!!!!!!!!!!!!!
%!!!!!!!!!!!!!!!!!!!!!!   Présentation globale   !!!!!!!!!!!!!!!!!!!!!!
%!!!!!!!!!!!!!!!!!!!!!!                          !!!!!!!!!!!!!!!!!!!!!!

%% Les notes dans la marge.

\newcounter{ifnote}
\setcounter{ifnote}{1}          % Accepter les notes dans la marge

%% Les équations numérotées.

\newcounter{Corrige}
\newcounter{Equation}[Corrige]

%!!!!!!!!!!!!!!!!!!!!                               !!!!!!!!!!!!!!!!!!!!
%!!!!!!!!!!!!!!!!!!!!   Mise en page   !!!!!!!!!!!!!!!!!!!!
%!!!!!!!!!!!!!!!!!!!!                               !!!!!!!!!!!!!!!!!!!!

%% Le titre courant.

\pagestyle{fancy}

\lhead[\thepage]{\fancyplain{}{\footnotesize \leftmark}}
\rhead[\fancyplain{}{\footnotesize\leftmark}]{\thepage}
\lfoot[]{}
\cfoot[]{}
\rfoot[]{}

%\pagestyle{fancy}
%\lhead[\thepage]{\footnotesize\LeTitre}
%\chead[]{}
%\rhead[\footnotesize\LeTitre]{\thepage}
%\lhead[\thepage]{\fancyplain{}{\footnotesize \leftmark}}
%\chead[\footnotesize\LeTitre]{\footnotesize\LeTitre}
%\rhead[\fancyplain{}{\footnotesize\leftmark}]{\thepage}
%\lfoot[]{}
%\cfoot[]{}
%\rfoot[]{}

%!!!!!!!!!!!!!!!!!!!!!                             !!!!!!!!!!!!!!!!!!!!!
%!!!!!!!!!!!!!!!!!!!!!   Des commandes pratiques   !!!!!!!!!!!!!!!!!!!!!
%!!!!!!!!!!!!!!!!!!!!!                             !!!!!!!!!!!!!!!!!!!!!

%% Rendre les guillemets ('«' et '»') actifs.
%%
%% Attention, cela implique de ne pas mettre d'espace après '«'
%% ni avant '»' dans le code source.
%%
%% Exemple de composition correcte: «comme ceci».

\catcode`\«=\active
\catcode`\»=\active
\def«{\og\ignorespaces}
\def»{{\fg}}

% L'environnement {remarque}.
% Voir plus bas.
        {\end{filet}\bigskip}

% L'environnement {indications}. Le nom « custom-itemize » est plus
% « générique »...
\newenvironment{custom-itemize}[1]%
        {%
        \begin{list}{}%
                {%
                \settowidth{\labelwidth}{#1}%
                \setlength{\leftmargin}{\labelwidth+\labelsep}%
                }%
        }%
        {\end{list}}
        {\end{custom-itemize}}

\reversemarginpar \marginparwidth 2.6cm

\newcounter{Note}[page]
\setcounter{Note}{0}

\newcounter{notesimple}
\setcounter{notesimple}{0}

%Pour les environnements hostiles
\newcounter{notemark}
\setcounter{notemark}{0}
\newcounter{notemarkref}
\setcounter{notemarkref}{0}
\newcounter{notetext}
\setcounter{notetext}{0}
\newcounter{notetextref}
\setcounter{notetextref}{0}

% Les compteurs \*ref servent à se rappeler où on en est, par exemple
% dans le cas où plusieurs \notemark se suivent avant l'apparition du
% premier \notetext; il faut en effet que le numéro de celui-ci
% corresponde au numéro du premier \notemark.

\newcommand{\notemark}{%
        \ifthenelse{\equal{\value{notemark}}{\value{notemarkref}}}{%
                        %on est au debut de l'environnement (premiere note)
                \refstepcounter{notemark}%
                \refstepcounter{notemarkref}%
                \setcounter{Note}{\value{notemark}}%
       }{%
                        %notes suivantes
                \refstepcounter{notemark}%
                \setcounter{Note}{\value{notemark}}%
        }%
                        %on prepare la suite
        \setcounter{notetextref}{\value{notemarkref}}%
                        %on écrit le numéro
        \begin{picture}(0,0)%
                \put(-3,-3){\LARGE\theNote}
        \end{picture}\xspace
}

\newcommand{\notetextbase}[1]{%
        \ifthenelse{\value{ifnote} = 1}{%
        \marginpar{\notedebasenumero{#1}}%
        }{}%
}

\newcommand{\notetext}[1]{
        \ifthenelse{\equal{\value{notetext}}{\value{notetextref}}}{%
                        %on est a la première note
                \refstepcounter{notetext}%
                \refstepcounter{notetextref}%
                \setcounter{Note}{\value{notetext}}
        }{
                         %notes suivantes
                \refstepcounter{notetext}%
                \setcounter{Note}{\value{notetext}}
        }%
                        %on prepare la suite
\setcounter{notemarkref}{\value{notetextref}}%
\notetextbase{#1}%
}

\newcommand{\notedebase}[1]{%
        \raggedright%
        \footnotesize%
        \vskip-0.5\baselineskip
        \rule[-1.4mm]{\linewidth}{0.5pt}
        \rule[1.4mm]{\linewidth}{0.5pt} \\%
        \vskip-0.5\baselineskip
        #1 \\%
        \vskip-\baselineskip
        \rule[-1.4mm]{\linewidth}{0.5pt}
        \rule[1.4mm]{\linewidth}{0.5pt} \\%
}

\newcommand{\notedebasenumero}[1]{%
        {\LARGE\theNote}
        \notedebase{#1}%
}

\newcommand{\notesimple}[1]{%
        \ifthenelse{\value{ifnote} = 1}{%
        \marginpar{\notedebase{#1}}%
        }{}%
}

\newcommand{\notenumero}[1]{%
        \ifthenelse{\value{ifnote} = 1}{%
        \refstepcounter{Note}%
        \setcounter{notemarkref}{\value{Note}}%
        \setcounter{notetextref}{\value{Note}}%
        \setcounter{notemark}{\value{Note}}%
        \setcounter{notetext}{\value{Note}}%
        \begin{picture}(0,0)%
        \put(-3,-3){\LARGE\theNote}
        \end{picture}%
        \ifthenelse{\isodd{\value{Note}}}{%
                \protect\reversemarginpar%
                \marginpar[{\notedebasenumero{#1}}]{\notedebasenumero{#1}}}{%
                \protect\normalmarginpar%
                \marginpar[{\notedebasenumero{#1}}]{\notedebasenumero{#1}}}%
        }{}%
}

\newcommand{\note}[2][1]{%
        \ifthenelse{\equal{#1}{0} \or \equal{#1}{simple}}{%
                \setcounter{notesimple}{1}%
        }{}%
        \ifthenelse{\value{notesimple} = 0}{%
                \notenumero{#2}%
        }{%
                \notesimple{#2}%
        }
}

%!!!!!!!!!!!!!!!!!!!                                 !!!!!!!!!!!!!!!!!!!
%!!!!!!!!!!!!!!!!!!!   Petits raccourcis pratiques   !!!!!!!!!!!!!!!!!!!
%!!!!!!!!!!!!!!!!!!!                                 !!!!!!!!!!!!!!!!!!!

% Les espaces verticaux et horizontaux.
\newcommand{\sk}{\smallskip}
\newcommand{\mk}{\medskip}
\newcommand{\bk}{\bigskip}

\newcounter{cinquantaine}
\newcounter{centaine}
\newcommand{\reperes}[1][20]{
        \ifthenelse{\value{ifnote} = 1}{%
                \setcounter{cinquantaine}{#1/5+1}%
                \setcounter{centaine}{#1/10+1}%
                \multiput(0,0)(10,0){#1}{\line(0,1){1}}%
                \multiput(0,0)(10,0){#1}{\line(0,-1){1}}%
                \multiput(0,0)(0,10){#1}{\line(1,0){1}}%
                \multiput(0,0)(0,10){#1}{\line(-1,0){1}}%
                \multiput(0,0)(-10,0){#1}{\line(0,1){1}}%
                \multiput(0,0)(-10,0){#1}{\line(0,-1){1}}%
                \multiput(0,0)(0,-10){#1}{\line(1,0){1}}%
                \multiput(0,0)(0,-10){#1}{\line(-1,0){1}}%
                \multiput(0,0)(50,0){\thecinquantaine}{\line(0,1){3}}%
                \multiput(0,0)(50,0){\thecinquantaine}{\line(0,-1){3}}%
                \multiput(0,0)(0,50){\thecinquantaine}{\line(1,0){3}}%
                \multiput(0,0)(0,50){\thecinquantaine}{\line(-1,0){3}}%
                \multiput(0,0)(-50,0){\thecinquantaine}{\line(0,1){3}}%
                \multiput(0,0)(-50,0){\thecinquantaine}{\line(0,-1){3}}%
                \multiput(0,0)(0,-50){\thecinquantaine}{\line(1,0){3}}%
                \multiput(0,0)(0,-50){\thecinquantaine}{\line(-1,0){3}}%
                \multiput(0,0)(100,0){\thecentaine}{\line(0,1){5}}%
                \multiput(0,0)(100,0){\thecentaine}{\line(0,-1){5}}%
                \multiput(0,0)(0,100){\thecentaine}{\line(1,0){5}}%
                \multiput(0,0)(0,100){\thecentaine}{\line(-1,0){5}}%
                \multiput(0,0)(-100,0){\thecentaine}{\line(0,1){5}}%
                \multiput(0,0)(-100,0){\thecentaine}{\line(0,-1){5}}%
                \multiput(0,0)(0,-100){\thecentaine}{\line(1,0){5}}%
                \multiput(0,0)(0,-100){\thecentaine}{\line(-1,0){5}}%
        }%
        {}%
}

% \mathbf ne suffit pas toujours à mettre en gras. En pareil cas, on peut
% avoir recours à \mathgras. Il ne vaut toutefois mieux pas l'utiliser
% systématiquement, car il ralentit la compilation du corrigé.
%
% Exemple : $a \mathgras{=} b$.
%
% Deux autres avantages: d'une part, tout texte en indice ou en exposant
% est automatiquement en gras; d'autre part, la police reste la même (ce
% qui n'est pas le cas avec \mathbf).

\newcommand{\mathgras}[1]{\ensuremath{%
        {\text{\mathversion{bold}\ensuremath{#1}}}%
        }}

% Numéroter les parties et sous-parties (annexes).
%\renewcommand{\thesection}{\Roman{section}}
%\renewcommand{\thesubsection}{\arabic{subsection}}

% Pour commencer sur une page de droite et laisser proprement une page
% blanche en cas de besoin.

% Des points de suspension sans limite de longueur.
\def\leaderfill{\leaders\hbox to 1ex{\hss.\hss}\hfill}

% Un peu spécieux, mais en fait pratique pour les annexes de maths.

% Insérer une page de séparation, qui contient par exemple « Annexes » en
% énorme centré sur la page.

% Pour le tableau des constantes en chimie.
%
        {\setlength{\columnsep}{8 mm}%
        \setlength{\columnseprule}{0.5 pt}%
        \begin{multicols}{2}}%
        {\end{multicols}}

%%%%%%%%%%%%%%%%%%%%%%%%%%%%%%%%%%%%%%%%%%%%%%%%%%%%%%%%%%%%%%%%%%%%%%%%
%%%%%%%%%%%%%%%%%%%%%%%%%%%%%%%%%%%%%%%%%%%%%%%%%%%%%%%%%%%%%%%%%%%%%%%%
%%%%%%%%%%%%%%%%%%                                     %%%%%%%%%%%%%%%%%
%%%%%%%%%%%%%%%%%%   Pour l'environnement {remarque}   %%%%%%%%%%%%%%%%%
%%%%%%%%%%%%%%%%%%                                     %%%%%%%%%%%%%%%%%
%%%%%%%%%%%%%%%%%%%%%%%%%%%%%%%%%%%%%%%%%%%%%%%%%%%%%%%%%%%%%%%%%%%%%%%%
%%%%%%%%%%%%%%%%%%%%%%%%%%%%%%%%%%%%%%%%%%%%%%%%%%%%%%%%%%%%%%%%%%%%%%%%

\makeatletter

\newbox\bk@bxb
\newbox\bk@bxa
\newif\if@bkcont
\newif\ifbkcount
\newcount\bk@lcnt

\def\breakboxskip{2pt}
\def\breakboxparindent{1.8em}
\def\margesep{1cm}      % Écart entre la marge de gauche et le filet.
\def\intervalle{1mm}    % Écart supplémentaire entre le filet et le texte.

\def\filet{\vskip\breakboxskip\relax
\setbox\bk@bxb\vbox\bgroup
\advance\linewidth -\fboxrule
\advance\linewidth -\margesep
\advance\linewidth -\intervalle
\advance\linewidth -\fboxsep
\hsize\linewidth\@parboxrestore
\parindent\breakboxparindent\relax}

% \@tempdimb: amount of vertical skip
% between the first line (\bk@bxa) and the rest (\bk@bxb)
\def\bk@split{%
\@tempdimb\ht\bk@bxb % height of original box
\advance\@tempdimb\dp\bk@bxb
\setbox\bk@bxa\vsplit\bk@bxb to\z@ % split it
\setbox\bk@bxa\vbox{\unvbox\bk@bxa}% recover height & depth of \bk@bxa
\setbox\@tempboxa\vbox{\copy\bk@bxa\copy\bk@bxb}% naive concatenation
\advance\@tempdimb-\ht\@tempboxa
\advance\@tempdimb-\dp\@tempboxa}% gap between two boxes

% \@tempdima: height of the first line (\bk@bxa) + fboxsep
\def\bk@addfsepht{%
     \setbox\bk@bxa\vbox{\vskip\fboxsep\box\bk@bxa}}

\def\bk@addskipht{%
     \setbox\bk@bxa\vbox{\vskip\@tempdimb\box\bk@bxa}}

% \@tempdima: depth of the first line (\bk@bxa) + fboxsep
\def\bk@addfsepdp{%
     \@tempdima\dp\bk@bxa
%     \advance\@tempdima\fboxsep
     \dp\bk@bxa\@tempdima}

% \@tempdima: depth of the first line (\bk@bxa) + vertical skip
\def\bk@addskipdp{%
     \@tempdima\dp\bk@bxa
     \advance\@tempdima\@tempdimb
     \dp\bk@bxa\@tempdima}

\def\bk@line{%
    \hbox to \linewidth{\ifbkcount\smash{\llap{\the\bk@lcnt\ }}\fi
    \hskip\margesep
    \vrule \@width\fboxrule\hskip\fboxsep
    \hskip\intervalle
    \box\bk@bxa\hfil
%    \hskip\fboxsep
        }}

\def\endfilet{\egroup
\ifhmode\par\fi{\noindent\bk@lcnt\@ne
\@bkconttrue\baselineskip\z@\lineskiplimit\z@
\lineskip\z@\vfuzz\maxdimen
\bk@split\bk@addfsepht\bk@addskipdp
\ifvoid\bk@bxb      % Only one line
\def\bk@fstln{\bk@addfsepdp
%\vbox{\hrule\@height\fboxrule\bk@line\hrule\@height\fboxrule}}%
\vbox{\bk@line}}%
\else               % More than one line
\def\bk@fstln{\vbox{\bk@line}\hfil
\advance\bk@lcnt\@ne
\loop
 \bk@split\bk@addskipdp\leavevmode
\ifvoid\bk@bxb      % The last line
 \@bkcontfalse\bk@addfsepdp
% \vtop{\bk@line\hrule\@height\fboxrule}%
 \vtop{\bk@line}%
\else               % 2,...,(n-1)
 \bk@line
\fi
 \hfil\advance\bk@lcnt\@ne
\if@bkcont\repeat}%
\fi
\leavevmode\bk@fstln\par}\vskip\breakboxskip\relax}

\bkcountfalse

\makeatother

\newlength{\leftlength}
\newlength{\rightlength}
\newlength{\calculskip}
\newcommand{\calculvskip}[1]{%
  \ifthenelse{#1 = 0}{\setlength{\calculskip}{0pt}}{}%
  \ifthenelse{#1 = 1}{\setlength{\calculskip}{\smallskipamount}}{}%
  \ifthenelse{#1 = 2}{\setlength{\calculskip}{\medskipamount}}{}%
  \ifthenelse{#1 = 3}{\setlength{\calculskip}{\bigskipamount}}{}%
  \ifthenelse{#1 = 4}{\setlength{\calculskip}{1cm}}{}%
  \vskip\calculskip
}

\newcommand{\leftcentersright}[4][2]{%
        \settowidth{\leftlength}{#2}%
        \settowidth{\rightlength}{#4}%
                % On laisse un espace vertical défini par l'argument
                % optionnel #1
        \calculvskip{#1}
                %
                % On place #2 et on recule de sa longueur
                % Amélioration par Céline: la ligne suivante fait en sorte
                % que l'argument du milieu soit toujours bien centré sur la
                % page, plutôt que centré dans l'environnement.
        \noindent#2\hskip-\leftlength%
%        \hskip\linewidth\hskip-\textwidth%
                % On centre #3
        \hfill#3\hfill
                % On va au bout de la ligne, on recule de la longueur de #4 et
                % on place #4
        \mbox{}\hskip-\rightlength#4%
                % On laisse un espace vertical défini par l'argument
                % optionnel #1
        \vskip\calculskip%
        }

\newcommand{\centers}[2][2]{\leftcentersright[#1]{}{#2}{}}
\newcommand{\leftcenters}[3][2]{\leftcentersright[#1]{#2}{#3}{}}
\newcommand{\centersright}[3][2]{\leftcentersright[#1]{}{#2}{#3}}

\newsavebox{\boite}
\def\debutcom{\begin{lrbox}{\boite}}
\def\fincomg{\end{lrbox}\makebox[0cm][l]{\usebox{\boite}}%
             \hskip\linewidth\hskip-\textwidth}
\def\fincomd{\end{lrbox}\makebox[0cm][r]{\usebox{\boite}}}

\newenvironment{calculs:base}[2][2]{%
        % On laisse un espace vertical défini par l'argument optionnel #1
        \calculvskip{#1}
        \noindent
        \begin{tabular*}{\linewidth}[t]%
                {@{}>{\debutcom}l<{\fincomg}@{\extracolsep{\fill}}%
                >{$}r<{$}%
                @{$\ #2\ $}%
                @{\extracolsep{0pt}}>{$}l<{$}%
                @{\extracolsep{\fill}}>{\debutcom}r<{\fincomd}@{}}
        }{%
        \end{tabular*}%
        \vskip\calculskip
}

% Autre cas fréquent: le symbole de relation entre les membres de gauche et
% de droite n'est pas '=', mais '\leq', ou '\geq'. On dispose de deux
% environnements pour répondre à ce besoin:
%
% \begin{inegalites:leq}
%     On a   & a & b &              \\[2mm]
%     et     & b & c & (question 3) \\[2mm]
%     donc   & a & c &              \\
% \end{inegalites:leq}
%
% \begin{inegalites:geq}
%     On a   & a & b &              \\[2mm]
%     et     & b & c & (question 3) \\[2mm]
%     donc   & a & c &              \\
% \end{inegalites:geq}
%
% Le premier cas étant de loin le plus fréquent, il est également accessible
% par la version abrégée {inegalites}:
%
% \begin{inegalites}
%     On a   & a & b &              \\[2mm]
%     et     & b & c & (question 3) \\[2mm]
%     donc   & a & c &              \\
% \end{inegalites}

\newenvironment{inegalites:leq}[1][2]{%
        \begin{calculs:base}[#1]{\leq}}{%
        \end{calculs:base}
}

\newenvironment{inegalites:geq}[1][2]{%
        \begin{calculs:base}[#1]{\geq}}{%
        \end{calculs:base}
}

% Les environnements précédents sont pratiques, mais ils sont loin de couvrir
% tous les cas. Bien souvent, le symbole de relation change en cours de
% calcul, ce qui nécessite un environnement plus général: {calculs}.
%
% Dans cet environnement, vous devez préciser le symbole de relation à chaque
% ligne, dans une colonne à part:
%
% \begin{calculs}
%     On a   & a &  =   & b &              \\
%     et     & b & \leq & c & (question 3) \\
%     donc   & a & \leq & c &              \\
% \end{calculs}
%
% Le principe est inchangé pour les autres colonnes.

%
        {\begin{calculs:rcl}[#1]{r}{c}{l}}%
        {\end{calculs:rcl}}

% Lorsque les calculs se compliquent, il est parfois souhaitable de ne pas
% suivre les consignes générales d'alignement, qui veulent que la colonne
% de gauche (dans les calculs, ie celle qui précède le symbole de relation)
% soit justifiée à droite, et la colonne de droite justifiée à gauche.
%
% LaTeX utilise les lettres
%   * l (left)
%   * c (center)
%   * r (right)
% pour classer les types d'alignements. La règle classique est donc
% r (colonne de gauche) puis c (symbole de relation) puis l (colonne de droite).
%
% L'environnement {calculs:rcl} fonctionne comme {calculs}, hormis qu'il
% exige trois paramètres, en plus de l'argument optionnel; ces paramètres
% règleront pour vous la justification des colonnes.
%
% \begin{calculs:rcl}{c}{c}{l}
%     On a   & a & = & b &              \\[2mm]
%     et     & b & = & c & (question 3) \\[2mm]
%     donc   & a & = & c &              \\
% \end{calculs:rcl}
%
% Remarquez qu'il n'est pas possible de changer l'alignement des colonnes
% ligne par ligne: ceci est une limitation de LaTeX.

\newenvironment{calculs:rcl}[4][2]{%
        % On laisse un espace vertical défini par l'argument
        % optionnel #1
        \calculvskip{#1}
        \noindent
        \begin{tabular*}{\linewidth}[t]%
                {@{}>{\debutcom}l<{\fincomg}@{\extracolsep{\fill}}
                >{$}#2<{$}@{\extracolsep{0pt}}%
                >{$\ }#3<{\ $}%
                @{\extracolsep{0pt}}>{$}#4<{$}%
                @{\extracolsep{\fill}}>{\debutcom}r<{\fincomd}@{}}%
        }{%
        \end{tabular*}%
        \vskip\calculskip
}

% Occasionnellement, il arrive que l'on ait besoin de séparer en deux morceaux
% la colonne de droite dans un calcul. Cela peut avoir lieu dans deux cas:
%
%   1° lorsqu'une ligne est trop longue;
%   2° lorsque le membre de droite comporte deux termes bien distincts.
%
% Le premier cas n'est pas traité ici. En revanche, pour le deuxième, c'est
% simple: il suffit de diviser la colonne de droite en deux colonnes ou, ce
% qui revient au même, à introduire une colonne supplémentaire entre la
% colonne des relations et la colonne des commentaires collés à la marge de
% droite.
%
% Voici un exemple d'utilisation:
%
% \begin{calculs:rcl:extracol}{r}{c}{l}{l}
%     On a   & a   &  =   & b_1 & + b_2 &               \\[2mm]
%     et     & b_1 & \leq & c   &       & (question 3a) \\[2mm]
%            & b_2 & \leq & d   &       & (question 3b) \\[2mm]
%     donc   & a   & \leq & c   & + d   &               \\
% \end{calculs:rcl:extracol}
%
% Dans cet exemple, imaginez bien sûr que b_1, b_2, c et d sont des
% expressions plutôt complexes: il va de soi que vous n'avez pas besoin
% de cet environnement pour additionner des termes de petite taille.

\newenvironment{calculs:rcl:extracol}[5][2]{%
        \calculvskip{#1}
        \noindent
        \begin{tabular*}{\linewidth}[t]%
                {@{}>{\debutcom}l<{\fincomg}@{\extracolsep{\fill}}%
                >{$}#2<{$}@{\extracolsep{0pt}}%
                >{$\ }#3<{\ $}@{\extracolsep{0pt}}%
                >{$}#4<{$}@{\extracolsep{0pt}}%
                >{$\null}#5<{$}%
                @{\extracolsep{\fill}}>{\debutcom}r<{\fincomd}@{}}%
}{%
        \end{tabular*}%
        \vskip\calculskip
}

\newenvironment{calculs:latotale}[6][2]{%
        \calculvskip{#1}
        \noindent
        \begin{tabular*}{\linewidth}[t]%
                {@{}>{\debutcom}l<{\fincomg}@{\extracolsep{\fill}}%
                >{$\null}#2<{$}@{\extracolsep{0pt}}%
                >{$\null}#3<{$}@{\extracolsep{0pt}}%
                >{$\null}#4<{$}%
                @{\extracolsep{0pt}}>{$\null}#5<{$}%
                @{\extracolsep{0pt}}>{$\null}#6<{$}%
                @{\extracolsep{\fill}}>{\debutcom}r<{\fincomd}@{}}%
}{%
        \end{tabular*}%
        \vskip\calculskip
}

% Bravo d'avoir lu jusque-là ! Pour vous en remercier, voici une petite
% astuce: les séparateurs '&' n'ont pas besoin d'être précisés lorsqu'ils
% précèdent immédiatement le retour à la ligne '\\'. Autrement dit, si
% vous ne collez pas de commentaire à la marge de droite, vous pouvez
% vous éviter de taper un '&' et, surtout, vous éviter de compter ces
% symboles.

%%%%%%%%%%%%%%%%%%%%%%%%%%%%%%%%%%%%%%%%%%%%%%%%%%%%%%%%%%%%%%%%%%%%%%%
%%%%%%%%%%%%%%%%%%%%%%%%%%%%%%%%%%%%%%%%%%%%%%%%%%%%%%%%%%%%%%%%%%%%%%%
%%%%%%%%%%%%%%%%%%%                                  %%%%%%%%%%%%%%%%%%
%%%%%%%%%%%%%%%%%%%   Le langage des mathématiques   %%%%%%%%%%%%%%%%%%
%%%%%%%%%%%%%%%%%%%                                  %%%%%%%%%%%%%%%%%%
%%%%%%%%%%%%%%%%%%%%%%%%%%%%%%%%%%%%%%%%%%%%%%%%%%%%%%%%%%%%%%%%%%%%%%%
%%%%%%%%%%%%%%%%%%%%%%%%%%%%%%%%%%%%%%%%%%%%%%%%%%%%%%%%%%%%%%%%%%%%%%%

%!!!!!!!!!!!!!!!!!!!!!!!!!!                   !!!!!!!!!!!!!!!!!!!!!!!!!!
%!!!!!!!!!!!!!!!!!!!!!!!!!!   Les fractions   !!!!!!!!!!!!!!!!!!!!!!!!!!
%!!!!!!!!!!!!!!!!!!!!!!!!!!                   !!!!!!!!!!!!!!!!!!!!!!!!!!

% La taille des fractions doit s'adapter au contexte: grandes dans une
% équation, petites en exposant ou en indice. La commande \f gère pour vous
% tous les cas.
%
% Exemple : $ \f{1}{2} $
%           $ x^{\f{3}{2}} $
%           $ \Int{0}{ \f{pi}{2} } \cos²t \dt $
\newcommand{\f}[2]{{\ensuremath{%
        \mathchoice%
        {\dfrac{#1}{#2}}
    {\dfrac{#1}{#2}}
        {\frac{#1}{#2}}
        {\frac{#1}{#2}}
}}}

\renewcommand{\tilde}[1]{\ensuremath{\widetilde{#1}}}

% Plus et moins l'infini.
%
% Lorsque l'on écrit $+\infty$, le plus est trop gros. On fait la même
% remarque avec $-\infty$. Pour contourner ce problème, vous pouvez utiliser
% des macros spécifiques à ces cas-là.
%
% Exemple: $ \sqrt{n} \tvn \pinf $
%
% Attention, n'utilisez jamais \plus et \moins dans vos corrigés.
% Ces macros n'existent que pour définir \pinf et \minf et quelques autres
% bidouilles comme \Rplus.
% Ne les utilisez pas à la place de + et - dans les opérations courantes !

\newcommand{\plus}{\mbox{\protect\raisebox{.2mm}{\tiny{\ensuremath{+}}}}}
\newcommand{\moins}{\mbox{\protect\raisebox{.2mm}{\tiny{\ensuremath{-}}}}}
\newcommand{\pinf}{\plus\ensuremath{\infty}}
\newcommand{\minf}{\moins\ensuremath{\infty}}

% Le « plus ou moins » (\pm) de LaTeX est vraiment trop laid.
\newlength{\pmlength}
\settowidth{\pmlength}{{\scriptsize$+$}}

% La commande \pm précédente ne passe pas bien en \mathgras... En voici
% une version spécialement prévue dans ce cas.
\newlength{\pmgraslength}
\settowidth{\pmgraslength}{{\scriptsize\ensuremath{\mathgras{$+$}}}}

% La commande \pm précédente ne passe pas bien en exposant... En voici
% une version spécialement prévue dans ce cas.
\newlength{\pmpetitlength}
\settowidth{\pmpetitlength}{{\tiny$+$}}

% Idéalement, le « e » de l'exponentielle et le « i » des complexes devraient
% être composés en romain (« droit »). Ceci est facultatif dans les Annales,
% parce que la composition est rendue moins intuitive.
% Toutefois, pour simplifier la composition des auteurs qui souhaitent suivre
% cette louable convention, les commandes suivantes peuvent être utiles.
%
% Exemples:     La fonction $f \colon x \mapsto \ex{x+1}$ etc.
%               La «racine carrée» de $-1$ est notée \ir.
%               La racine troisième de l'unité est notée $\jr$.
%               $1 + \ir = \sqrt{2} \: \exi{\pi / 4}$.

        % « i romain »
        % « j romain »

% Donner la bonne tête aux symboles courants: sacrés amerloques !
\renewcommand{\leq}{\ensuremath{\leqslant}}
\renewcommand{\geq}{\ensuremath{\geqslant}}
                           % Synonyme
                           % Synonyme
\renewcommand{\epsilon}{\ensuremath{\varepsilon}}

\renewcommand{\setminus}{\smallsetminus}        % $\R \setminus \Q$
%\newcommand{\bs}{\ensuremath{\backslash}}      % Quand \setminus est trop petit

% \phi et \varphi.
% Neuf fois sur dix, c'est \varphi que l'on veut dessiner, ce qui mérite un
% raccourci.

% Une moyenne bien présentée.
% Exemple: $ \moyenne{ \vectE } $

% Pour placer une conjonction entre deux termes dans une formule centrée.

% Pour dessiner la restriction d'une fonction à un ensemble, la coutume est
% de placer le nom de la fonction, une barre verticale et en indice,
% l'ensemble dans lequel on fait vivre la restriction. C'est l'objet de cette
% commande.
%
% Exemple: $ \rest{f}{ \Z } $

\newlength{\restsubwidth}
\newlength{\restsubheight}
\newlength{\restsubmoreheight}
\setlength{\restsubmoreheight}{4 pt}
\newcommand{\rest}[2]{%
        \settowidth{\restsubwidth}{\ensuremath{#2}}
        \settoheight{\restsubheight}{\ensuremath{{}_{#2}}}
        \ensuremath{{#1\hskip 0.5 pt}_{\vrule\kern2pt\parbox[b][%
        \the\restsubheight +
                \the\restsubmoreheight][b]{\the\restsubwidth}{%
                        \ensuremath{{}_{#2}}}}}
        }

\newcommand{\Sumtproto}[2]{%
        \ifthenelse{%
                \equal{#1}{}
        }{%
                \ifthenelse{%
                        \equal{#2}{}%
                }{%
                        \ensuremath{\sum}%
                }{%
                        \smash[b]{\ensuremath{\sum\limits_{#1}^{#2}}}%
                }
        }{%
                \ensuremath{\sum\limits}_{#1}^{#2}%
        }%
}

% Il y a quelques cas où l'on souhaite dessiner un symbole de sommation plus
% grand que la normale, par exemple s'il est suivi d'une intégrale.
%
% Exemple: $ \SUM{k=1}{n}   \f{1}{k}   \Int{}{} f_n(t) \dt $

\newcommand{\SUM}[2]{\ensuremath{{\displaystyle\sum\limits_{#1}^{#2}}}}

\makeatletter
\newif\if@ListeStar
\global\@ListeStartrue

\newcommand{\liste}{%
        \@ifstar{\global\@ListeStartrue\@liste}%
                {\global\@ListeStarfalse\@liste}%
}

\newcommand{\@liste}[2][n]{%
        \if@ListeStar%
                \left({#2}_0,{#2}_1,\ldots,{#2}_{#1}\right)%
        \else%
                \left({#2}_1,{#2}_2,\ldots,{#2}_{#1}\right)%
        \fi\@ListeStarfalse%
}
\makeatother

% Les commandes suivantes ne sont gardées que pour la compatibilité
% ascendante. Elles sont néanmoins fonctionnelles, mais n'acceptent pas
% de forme étoilée.

                         % Synonyme

                             % Synonyme
                             % Synonyme
                             % Synonyme
                             % Synonyme

% Les deux grandes manières utilisées en prépa pour noter les suites sont de
% la forme $(u_n)$ et $(u_n)_{n\in\N}$. La première (sans les indices) est
% généralement préférable, tant que le contexte permet de lever toute
% ambiguïté sur ce qui serait en indice dans la notation complète.
% Toutefois, il est parfois utile de préciser les indices, par exemple dans
% les résultats encadrés. On pourra alors soit composer les indices
% « à la main », soit avec la commande \suite.
%
% Exemples:     \suite{u}       -> (u_n)_{n\in\N}
%               \suite*{u}      -> (u_n)_{n\in\N^*}
%               \suite[k]{u}    -> (u_k)_{k\in\N}
%               \suite*[k]{u}   -> (u_k)_{k\in\N^*}

\makeatletter
\newif\if@SuiteStar
\global\@SuiteStartrue

\newcommand{\suite}{%
        \@ifstar{\global\@SuiteStartrue\@suite}%
                {\global\@SuiteStarfalse\@suite}%
}

\newcommand{\@suite}[2][n]{%
        \if@SuiteStar%
                \left(#2_{#1}\right)_{#1\in\N^*}%
        \else%
                \left(#2_{#1}\right)_{#1\in\N}%
        \fi\@SuiteStarfalse%
}
\makeatother

%% Quelques ensembles classiques.

% La commande \mathBB est proposée par Paul; elle est construite sur la base
% de la commande usuelle \mathbb, mais pour obtenir la version étoilée
% (« R étoile »), il suffit de placer une étoile après le symbole, sans
% chapeau.
% Exemple:      \N*     est équivalent à \mathbb{N}^{*}

\makeatletter
\newif\if@laststared
\global\@laststaredtrue
\newcommand{\mathBB}[1]{%
        \@ifstar%
        {\global\@laststaredtrue\m@thBB{#1}}%
        {\global\@laststaredfalse\m@thBB{#1}}%
}
\newcommand{\m@thBB}[1]{%
        \if@laststared{\ensuremath{\mathbb{#1}^{*}}\xspace}%
        \else{\ensuremath{\mathbb{#1}}\xspace}%
        \fi%
        \@laststaredfalse%
}
\makeatother

      % L'ensemble vide
\newcommand{\N}{\ensuremath{\mathBB{N}}}                % Entiers
                % Entiers relatifs
                % Rationnels
\newcommand{\R}{\ensuremath{\mathBB{R}}}                % Réels
                % Complexes
                % Corps fini

% Les nombres complexes de module 1 (Groupe Unimodulaire)

% Un corps quelconque.

                                 % Synonyme

% Les anneaux cycliques.

        % \ZZ[n] --> Z/nZ
        % \ZZ[2] --> Z/2Z
        % \ZZ    --> Z/nZ

% Les anneaux de polynômes.
% Exemple: $ P \in \CX    \implique    P \in \RX $

% Le groupe symétrique.
                      % Au choix
                     % Au choix
                     % Synonyme

% Une notation affreuse parfois rencontrée pour l'ensemble des nombres
% premiers.
\renewcommand{\P}{\ensuremath{\mathBB{P}}}

% L'anneau des matrices.
% Exemple: $ A \in \M_n(\R) $

%% Des symboles d'ensemble avec des lettres rondes.

   % $f$ est de classe \Cun

  % $f$ est de classe $\classe^4$
 % La courbe représentative

     % \def est réservé par TeX
  % $ \deff = \intoo{0}{1} $
     % Un ensemble
\newcommand{\G}{\ensuremath{\mathscr{G}}}       % La constante de gravitation
     % $ u \in \Lin(E,F) $
   % Le produit mixte

      % Pour les idéaux
  % Ensemble des solutions
  % Volumes

% Des droites parallèles.
% Exemple: $\droite_1 \parallele \droite_2$

% Les intervalles.
%
% Bien composer un intervalle n'est pas facile: il faut adapter la taille des
% crochets et bien espacer le délimiteur central, qui doit être un
% point-virgule. On dispose de commandes qui prennent tout cela en charge.
% On précise si les intervalles sont Ouverts ou Fermés à gauche et à droite,
% puis les valeurs des bornes.
%
% Exemple:      \intff{0}{1}    <==>    [0;1]
%               \intof{0}{1}    <==>    ]0;1]
%               \intfo{0}{1}    <==>    [0;1[
%               \intoo{0}{1}    <==>    ]0;1[
%
% Si le membre de gauche (resp. de droite) est omis, il est remplacé par
% -\infty (resp. +\infty).
%
% Pour les intervalles d'entiers, on peut ajouter une barre verticale dans les
% crochets, et les bornes doivent alors être incluses dans l'ensemble. La
% taille de ces crochets modifiés ne dépend pas des arguments; en cas de
% besoin, utiliser la notation ensembliste.
%
% Exemple: $ \forall k \in \intn{0}{n-1}    \quad    |u_k| < 1 $

\newcommand{\TestGauche}[1]{\ifthenelse{\equal{#1}{}}{\minf}{#1}}
\newcommand{\TestDroite}[1]{\ifthenelse{\equal{#1}{}}{\pinf}{#1}}

%\newcommand{\intff}[2]{\ensuremath{\left [ \, #1 \,; #2 \, \right ] }}
%\newcommand{\intfo}[2]{\ensuremath{\left [ \, #1 \,; #2 \, \right [ }}
%\newcommand{\intof}[2]{\ensuremath{\left ] \, #1 \,; #2 \, \right ] }}
%\newcommand{\intoo}[2]{\ensuremath{\left ] \, #1 \,; #2 \, \right [ }}

                           % Synonyme

% L'intérieur d'un ensemble est noté avec \ronde ou \interieur.
% Exemple: $ \ronde{A} $

                          % Synonyme
% Une alternative pour les ensembles notés par une lettre minuscule:
% \mathaccent0023{a}

%!!!!!!!!!!!!!!!!!!!!!!!!!!                  !!!!!!!!!!!!!!!!!!!!!!!!!!
%!!!!!!!!!!!!!!!!!!!!!!!!!!   Numérotation   !!!!!!!!!!!!!!!!!!!!!!!!!!
%!!!!!!!!!!!!!!!!!!!!!!!!!!                  !!!!!!!!!!!!!!!!!!!!!!!!!!

% Il peut être très utile de numéroter une équation pour y faire aisément
% référence ensuite, soit explicitement avec les commandes ci-dessous, soit
% avec la commande \centersnumero (décrite juste après).
%
% Exemple : \leftcentersright{par suite}{$
%               f = g² + h²
%               $}{\deux}.
%
% La commande de base est \numero; elle accepte un argument.
%
% Exemple: $ \numero{17} $
%
% Pour faire simple, des raccourcis qui utilisent cette macro sont déjà prévus
% (\un <==> \numero{1}).

\newcommand{\F}{\numero{F}}

        {\begin{pmatrix}}%
        {\end{pmatrix}}
        {\begin{vmatrix}}%
        {\end{vmatrix}}

\makeatletter
\renewcommand\section{\@startsection{section}{1}{\z@}%
	{1cm  \@plus -1ex \@minus -.2ex}%
	{2.3ex \@plus.2ex}%
	{\reset@font\normalsize\scshape\centering}}
\makeatother

\usepackage{palatino}  %police pour le mode texte
\usepackage[cm]{sfmath}  %police pour les maths (sans serif)

\usepackage{amsthm} %theoremes
\usepackage[bbgreekl]{mathbbol} %lettres grecques en gras

\usepackage{pst-node} %diagrammes
\usepackage{pstricks}

%listes ameliorees : l'espacement vertical est diminué
\usepackage{tweaklist}

\newtheorem{prop}{Proposition}[section]  
\newtheorem{de}[prop]{Definition}
\newtheorem{thm}[prop]{Theorem}
\newtheorem{lem}[prop]{Lemma}  
\newtheorem{cor}[prop]{Corollary}
\newtheorem*{thm2}{Theorem}

\newenvironment{rmk}{
	\refstepcounter{prop}%
	\noindent \textbf{Remark \theprop.}}{}
\newenvironment{example}{
	\refstepcounter{prop}%
	\noindent \textbf{Example \theprop.}
}{}
\newenvironment{notations}{
	\refstepcounter{prop}%
	\noindent \textbf{Notations \theprop.}
}{}
\newcommand{\leftcentersn}[4][2]{%
        \refstepcounter{prop}%
        \leftcentersright[#1]{#2}{#3}{%
                \textbf{(\theprop)}\label{#4}%
        }%
}
\newcommand{\bb}{Bialynicki-Birula}
\renewcommand{\G}{\mathbf{G}}
\newcommand{\B}{\mathbf{B}}
\newcommand{\U}{\mathbf{U}}
\newcommand{\V}{\mathbf{V}}
\newcommand{\T}{\mathbf{T}}
\renewcommand{\P}{\mathbf{P}}

\newcommand{\sRw}{{}^*\mathcal{R}_{\dot w}}
\renewcommand{\F}{\mathbb{F}}
\newcommand{\Y}{\mathrm{Y}}
\newcommand{\X}{\mathrm{X}}
\newcommand{\Yxw}{\mathrm{Y}_x(\dot w)}
\newcommand{\Yo}{\mathrm{Y}_{w_0}(\dot w)}
\newcommand{\Xo}{\mathrm{X}_{w_0}(w)}
\newcommand{\Yt}{\tilde{\mathrm{Y}}_x(\dot w)}
\renewcommand{\R}{\mathrm{R}\Gamma_c}
\newcommand{\Yg}{\mathrm{Y}_\gamma^\circ}
\newcommand{\ub}{U \backslash}
\newcommand{\db}{D(\U)^F \backslash}
\newcommand{\ol}{\mathop{\otimes}\limits^\mathrm{L}}
\renewcommand{\H}{\mathrm{H}_c}

\begin{document} 
\title{Deligne-Lusztig restriction \\ of a Gelfand-Graev module}
\author{Olivier Dudas\footnote{Laboratoire de Math\'ematiques, Universit\'e de Franche Comt\'e.}\footnote{
The author is partly supported by the ANR, Project No JC07-192339.}}

\maketitle

\begin{abstract}  Using Deodhar's decomposition of a double Schubert cell, we study the regular representations of finite groups of Lie type arising in the cohomology of Deligne-Lusztig varieties associated to tori. We deduce that the Deligne-Lusztig restriction of a Gelfand-Graev module is a shifted Gelfand-Graev module. 

\end{abstract}

\section*{Introduction}

Let $\G$ be a connected reductive algebraic group defined over an algebraic closure $\F$ of a finite field of characteristic $p$. Let $F$ be an isogeny of $\G$ such that some power is a Frobenius endomorphism. The finite group $G = \G^F$ of fixed points under $F$ is called a finite group of Lie type. We fix a maximal torus $\T$ contained in a Borel subgroup $\B$ with unipotent radical $\U$, all of which assumed to be $F$-stable. The corresponding Weyl group will be denoted by $W$.

\sk

In a attempt to have a complete understanding of the character theory of $G$, Deligne and Lusztig have introduced in \cite{DL} a family of biadjoint morphisms $\mathrm{R}_w$ and ${}^*\mathrm{R}_w$ indexed by $W$, leading to an outstanding theory of induction and restriction between $G$ and any of its maximal tori. Roughly speaking, they encode, into a virtual character, the different representations occurring in the cohomology of the corresponding Deligne-Lusztig variety. Unfortunately, the same construction does not give enough information in the modular setting, and one has to work at a higher level. More precisely, for a finite extension $\Lambda$ of the ring $\mathbb{Z}_\ell$ of $\ell$-adic integers, Bonnaf\'e and Rouquier have defined in \cite{BR1} the following functors:

\leftcenters{and}{$\begin{array}[b]{r@{\  \, : \,  \ }l} \mathcal{R}_{\dot w} & \mathcal{D}^b(\Lambda \T^{wF}$-$\mathrm{mod}) \longrightarrow \mathcal{D}^b(\Lambda G$-$\mathrm{mod}) \\[4pt] {}^*\mathcal{R}_{\dot w} & \mathcal{D}^b(\Lambda G$-$\mathrm{mod}) \longrightarrow \mathcal{D}^b(\Lambda \T^{wF}$-$\mathrm{mod}) \end{array}$}

\noindent between the derived categories of modules, which generalize the definition of Deligne-Lusztig induction and restriction. \sk

In this article we study the action of the restriction functor on a special class of representations: the Gelfand-Graev modules, which are projective modules  parametrized by the $G$-regular characters of $U$. More precisely, we prove in section 3 the following result:

\begin{thm2} Let $\psi : U \longrightarrow \Lambda^\times$ be a $G$-regular linear character, and denote by $\Gamma_\psi$ the associated Gelfand-Graev module of $G$. Then, for any $w$ in $W$, one has 

\centers{$\sRw \Gamma_{\psi} \, \simeq \, \Lambda \T^{wF} [-\ell(w)]$}

\noindent in the derived category $\mathcal{D}^b(\Lambda \T^{wF}$-$\mathrm{mod})$.

\end{thm2}

This result was already known for some specific elements of the Weyl group. In the case where $w$ is the trivial element, the Deligne-Lusztig functor ${}^*\mathcal{R}_{\dot w}$ comes from a functor defined at the level of module categories, and the result can be proved in a complete algebraic setting (see \cite[Proposition 8.1.6]{Car}). But more interesting is the case of a Coxeter element, studied by Bonnaf\'e and Rouquier in \cite{BR}. Their proof relies on the following geometric properties for the corresponding Deligne-Lusztig variety $\X(w)$ (see \cite{Lu}):

%\begin{itemize}
%
%\item $\X(w)$ is contained in the maximal Schubert cell $\B w_0  \B /\B$;
%
%\item the quotient variety $U \backslash \X(w)$ is a product of $\G_m$'s.
%
%\end{itemize}

\leftcentersright[1]{$\begin{array}{l} \quad \bullet \ \, \X(w) \text{ is contained in the maximal Schubert cell }\B w_0  \B /\B ; \\[4pt] \quad \bullet \ \, \text{the quotient variety } U \backslash \X(w) \text{ is a product of } \G_m\text{'s.} \end{array}$}{}{$\mathbf{(\star)}$}

\noindent Obviously, one cannot expect these properties to hold for any element $w$ of the Weyl group (for instance, the variety $\X(1)$ is a finite set of points whose intersection with any $F$-stable Schubert cell is non-trivial). However, it turns out that for the specific class of representations we are looking at, we can restrict our study to a smaller variety which will be somehow a good substitute for $\X(w)$.

%This subvariety is obtained as the maximal piece of a partition of $\X(w)$ coming from a general decomposition of double Schubert cells into locally closed smooth subvarieties. 

%More precisely, we show  in section 3 that there exists a partition of $\X(w)$ such that the only piece of this partition carrying regular characters in its cohomology satisfies the previous properties. The crucial ingredient of   
\sk
%\mk
 Let us give some consequences of this theorem, which are already known but can be deduced in an elementary way from our result.  From the quasi-isomorphism one can first obtain a  canonical algebra homomorphism from the endomorphism algebra of a Gelfand-Graev module to the algebra $\Lambda \T^{wF}$. Tensoring by the fraction field $K$ of $\Lambda$, it can be shown that we obtain the Curtis homomorphism ${}_K \mathrm{Cur}_w : \mathrm{End}_{KG}(K\Gamma_\psi) \longrightarrow K\T^{wF}$, thus giving a modular and conceptual version of this morphism (see \cite[Theorem 2.7]{BK}). 

%\sk
The character-theoretic version of the theorem is obtained in a drastic way, by tensoring the quasi-isomorphism by $K$ and by looking at the induced equality in the Grothendieck group of the category of $\Lambda \T^{wF}$-modules. Applying the Alvis-Curtis duality gives then a new method for computing the values of the Green functions at a regular unipotent element (see \cite[Theorem 9.16]{DL}). This is the key step for showing that a Gelfand-Graev character has a unique irreducible component in each rational series $\mathcal{E}(G,(s)_{{\G^*}^{F^*}})$.

%\sk 
Beyond these applications, our approach aims at understanding each of the cohomology groups of the Deligne-Lusztig varieties, leading to concentration and disjointness properties, in the spirit of Brou\'e's conjectures. For example, by truncating by unipotent characters, one can deduce that the Steinberg character is concentrated in the half cohomology group. This result was already proved in \cite[Proposition 3.3.15]{DMR}, by a completely different method, since their proof relies on the computation  of eigenvalues of Frobenius.  By refining our method, we should be able to deal with some other unipotent characters and enlarge the scope of our result.

\sk
%\mk
 This paper is divided into three parts: in the first section, we introduce the basic notations about the modular representation theory of finite groups of Lie type. Then, we focus on an extremely rich decomposition for double Schubert cells, introduced by Deodhar in \cite{Deo}. To this end, we shall use the point of view of \cite{Mo} and use the \bb\ decomposition, since it is particularly  adapted to our case. This is the crucial ingredient for proving the main theorem. Indeed, we show in the last section that the maximal piece of the induced decomposition on $\X(w)$ satisfies the properties $\mathbf{(\star)}$, and that it is the only one carrying regular characters in its cohomology.

%We show nevertheless that there exists a partition of $\X(w)$ such that the only piece of this partition carrying regular characters in its cohomology satisfies the previous properties.   

%\sk After recalling , we expose

%However, we show that there exists a partition of $\X(w)$ coming from Deohdar's decomposition of a double Schubert cell, such that the only piece of this partition carrying regular characters in its cohomology satisfies the previous properties.

\mk

\section*{Acknowledgments}

Part of this work was carried out while I was a Program Associate at the Mathematical Sciences Research Institute in Berkeley. I wish to thank the Institution for their support and hospitality and especially Arun Ram, who really helped me and the other graduate students to make the most of our stay. I am deeply indebted to C\'edric Bonnaf\'e for his suggestions, guidance, and encouragement throughout the course of this work. I also thank Geordie Williamson for fruitful discussions. 
\bk \pagebreak \newpage

\section{Preliminaries\label{prel}} 

\noindent \textbf{1 - Cohomology of a quasi-projective variety}

\mk Let $\Lambda $ be a commutative ring and $H$ a finite group. We denote by 
 $\Lambda H$-$\mathrm{mod}$ the abelian category of finitely generated $\Lambda H$-modules, and by $\mathcal{D}^b(\Lambda H$-$\mathrm{mod})$ the derived category of the corresponding bounded complexes. From now on, we assume that $\Lambda $ is a finite extension of the ring $\mathbb{Z}_\ell$ of $\ell$-adic integers, for a prime $\ell$ different from $p$. To any quasi-projective variety $\X$ defined over $\F$ and acted on by $H$, one can associate a classical object in this category, namely the cohomology with compact support of $\X$, denoted by $\R(\X,\Lambda)$.   It is quasi-isomorphic to a bounded complex of modules which have finite rank over $\Lambda$. 

\sk We give here some  quasi-isomorphisms we shall use in section \ref{proof}. The reader will find references or proofs of these properties in  \cite[Section 3]{BR1} and \cite[Proposition 6.4]{DL} for the third assertion. The last one can be deduced from \cite[Expos\'e XVIII, 2.9]{SGA4}.

\begin{prop}\label{coho}Let $\X$ and $\Y$ be two quasi-projective varieties acted on by $H$. Then one has the following isomorphisms in the derived category $\mathcal{D}^b(\Lambda H$-$\mathrm{mod})$:

\begin{itemize} 

\item[$\mathrm{(i)}$] The K\"unneth formula: 

\vskip -8pt \centers[1]{$\R(\X \times \Y, \Lambda) \, \simeq \, \R(\X , \Lambda) \, \ol\, \R(\Y,\Lambda)$}

\vskip -8pt \noindent where $\ol$ denotes the left-derived functor of the tensor product over $\Lambda$.

\item[$\mathrm{(ii)}$]  The quotient variety $H \backslash \X$ exists and 

\vskip -7pt \centers[0]{$\R(H \backslash \X, \Lambda) \, \simeq \, \Lambda \, {\ol}_{\Lambda H} \, \R(\X,\Lambda)$.}

 \item[$\mathrm{(iii)}$]  If the action of $H$ on $\X$ is the restriction of an action of a connected group, then 

\vskip -8pt \centers{$ \R(\X,\Lambda) \, \simeq \, \Lambda \, {\ol}_{\Lambda H} \, \R(\X,\Lambda ).$}

 \item[$\mathrm{(iv)}$] Let $\pi : Y \longrightarrow X$ be an $H$-equivariant smooth morphism of finite type. If the fibers of $\pi$ are isomorphic to affine spaces of constant dimension $n$, then 
 
 \centers[1]{$ \R(\Y,\Lambda ) \, \simeq \, \R(\X,\Lambda )[-2n].$}

\end{itemize}
\end{prop}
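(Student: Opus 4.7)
The plan is to prove each of the four isomorphisms by invoking standard results from $\ell$-adic \'etale cohomology with compact support. In all four cases the $H$-equivariance of the isomorphism will be automatic, because every geometric map used is $H$-equivariant, so the real content is the underlying quasi-isomorphism of complexes of $\Lambda$-modules, which can then be promoted to a quasi-isomorphism of $\Lambda H$-complexes.

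For (i), I would appeal to the K\"unneth formula for compactly supported cohomology, as formulated in SGA~4. Writing $p : \X \times \Y \to \X$ for the first projection, the proper base change theorem together with the projection formula gives $\mathrm{R} p_! \Lambda \simeq \Lambda_{\X} \ol \R(\Y,\Lambda)$; applying $\R(\X,-)$ then yields the stated isomorphism, and the diagonal $H$-action on $\X \times \Y$ makes every intermediate identification $H$-equivariant. For (ii), the fact that $\X$ is quasi-projective and $H$ is finite guarantees that every $H$-orbit lies in a common affine open, so the categorical quotient $H \backslash \X$ exists in the category of quasi-projective varieties. The quotient map $\pi : \X \to H \backslash \X$ is finite, hence $\mathrm{R} \pi_! = \pi_*$, and $\pi_* \Lambda$ identifies on stalks with the permutation module on $H/H_x$; the identification of the derived functor of $H$-coinvariants with $\Lambda \ol_{\Lambda H} (-)$ then yields the claim.

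For (iii), the key input is the theorem of Deligne that a connected algebraic group acts trivially in the derived category on $\ell$-adic cohomology (this rests, via a specialisation argument, on the constancy of $\ell$-adic local systems along a connected base). Granted this, $H$ acts trivially on $\R(\X,\Lambda)$ in $\mathcal{D}^b(\Lambda H$-$\mathrm{mod})$, and a standard argument shows that for any complex $C$ of $\Lambda H$-modules on which $H$ acts trivially in the derived category one has $\Lambda \ol_{\Lambda H} C \simeq C$, which is exactly the statement. For (iv), I would use smooth and proper base change together with the classical computation $\R(\mathbb{A}^n, \Lambda) \simeq \Lambda[-2n]$: these show that the higher direct image $\mathrm{R} \pi_! \Lambda_{\Y}$ is concentrated in cohomological degree $2n$ and is locally constant of rank one along the base; trivialising it fibrewise gives $\mathrm{R} \pi_! \Lambda_{\Y} \simeq \Lambda_{\X}[-2n]$, and applying $\R(\X,-)$ delivers the desired quasi-isomorphism.

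The most delicate step is (iii): the triviality of the action of a connected group on $\ell$-adic cohomology is the only item that is not a formal manipulation of derived functors, and it is also the input on which several later constructions in the paper depend. I would carry it over as a black box from the reference to SGA~4, Expos\'e~XVIII, and likewise defer the K\"unneth and smooth base change formulas to the same source rather than reproving any of them here.
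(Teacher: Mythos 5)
Parts (i) and (iv) of your sketch are fine, and there is little in the paper to compare them with: the paper offers no proof of this proposition at all, it simply points to [BR1, Section 3], to [DL, Proposition 6.4] for (iii), and to [SGA4, Exp.\ XVIII, 2.9] for (iv). The genuine gap is in your (iii). The ``standard argument'' you appeal to --- that if $H$ acts trivially on a complex $C$ in the derived category then $\Lambda\,{\ol}_{\Lambda H}\,C\simeq C$ --- is false: take $C=\Lambda$, the trivial module, for a group $H$ with $\ell$ dividing $|H|$; every $h$ acts as the identity on the nose, yet $\Lambda\,{\ol}_{\Lambda H}\,\Lambda$ carries the group homology of $H$ in infinitely many degrees and is not even bounded. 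What [DL, Proposition 6.4] actually gives is that each $h\in H$ induces the identity on $\R(\X,\Lambda)$ in $\mathcal{D}^b(\Lambda\text{-mod})$, i.e.\ trivial action on the cohomology modules; to upgrade this to the stated quasi-isomorphism one needs $|H|$ invertible in $\Lambda$, in which case $\Lambda$ is projective over $\Lambda H$, the functor $\Lambda\,{\ol}_{\Lambda H}\,(-)$ is just multiplication by the idempotent $e=|H|^{-1}\sum_{h\in H}h$, and triviality on cohomology shows that $(1-e)\R(\X,\Lambda)$ is acyclic. This hypothesis cannot be argued away, because the geometric statement itself fails without it: for $\X=\G_m$ and $H=\mu_\ell\subset\G_m$ acting by translation, $\R(\G_m,\Lambda)$ is a perfect complex of $\Lambda H$-modules (the action is free), whereas $\Lambda\,{\ol}_{\Lambda H}\,\R(\G_m,\Lambda)$ splits as $\Lambda[-1]\oplus\Lambda[-2]$ with trivial action, and comparing $\mathrm{R}\mathrm{Hom}_{\Lambda H}(\Lambda,-)$ of the two sides (bounded for the first, unbounded for the second) shows they are not isomorphic. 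So (iii) must be read, as in [BR1], with $|H|$ prime to $\ell$ --- which is how the paper uses it, always with $H=V_\alpha\simeq\F_{q_\alpha}$ a $p$-group.

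The same invertibility is missing at the last step of your (ii): from $\R(\X,\Lambda)\simeq\R(H\backslash\X,\pi_*\Lambda)$ you need $\Lambda\,{\ol}_{\Lambda H}\,\pi_*\Lambda\simeq\Lambda$, and on stalks this reads $\Lambda\,{\ol}_{\Lambda H}\,\Lambda[H/H_x]\simeq\Lambda\,{\ol}_{\Lambda H_x}\,\Lambda$, which is $\Lambda$ only when $|H_x|$ is invertible in $\Lambda$ (free actions, or the $p$-group stabilizers that occur in the paper); the one-point variety with trivial $H$-action and $\ell$ dividing $|H|$ already contradicts (ii) as literally stated. You would also need to justify pulling $\Lambda\,{\ol}_{\Lambda H}$ inside $\R(H\backslash\X,-)$, either by a projection-formula and finiteness argument or, as in [BR1, Section 3], by representing $\R(\X,\Lambda)$ by Rickard's bounded complex of direct summands of permutation modules $\Lambda[H/H_x]$, whose $H$-coinvariants compute the cohomology of the quotient. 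With the prime-to-$\ell$ hypotheses made explicit, your (ii) and (iii) become correct; (i) and (iv) are correct as they stand.
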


\sk

If $N$ is a finite group acting on $\X$ on the right and on $\Y$ on the left, we can form the amalgamated product $\X\times_N \Y$, as the quotient of $\X \times \Y$ by the diagonal action of $N$. Assume that the actions of $H$ and $N$ commute. Then $\X\times_N \Y$ is an $H$-variety and we deduce from the above properties that

\vskip -8pt \leftcentersn{}{$ \R(\X\times_N \Y, \Lambda ) \, \simeq \, \R(\X , \Lambda ) \, {\ol}_{\Lambda N}\, \R(\Y,\Lambda )$}{amalg}

\noindent in the derived category $\mathcal{D}^b(\Lambda H$-$\mathrm{mod})$.

\bk

\noindent \textbf{2 - Algebraic groups}
 
\mk We keep the basic assumptions of the introduction: $\G$ is a connected reductive algebraic group, together with an isogeny $F$ such that some power is a Frobenius endomorphism. In other words, there exists a positive integer $\delta$ such that $F^\delta$ defines a split $\F_q$-structure on $\G$ for a certain power $q$ of the characteristic $p$. For any algebraic subgroup $\mathbf{H}$ of $\G$ $F$-stable, we will denote by $H$ the finite group of fixed points $\mathbf{H}^F$.  \sk

We fix a Borel subgroup $\B$ containing a maximal torus $\T$ of $\G$ such that both $\B$ and $\T$ are $F$-stable. They define a root sytem $\Phi$ with basis $\Delta$, and a set of positive (resp. negative) roots $\Phi^+$ (resp. $\Phi^-$). Note that the corresponding Weyl group $W$ is endowed with a action of $F$, compatible with the isomorphism $W \simeq N_\G(\T)/\T$. Therefore, the image by $F$ of a root is a positive multiple of some other root, which will be denoted by $\phi^{-1}(\alpha)$, defining thus a bijection $\phi : \Phi \longrightarrow \Phi$. Since $\B$ is also $F$-stable, this map preserves $\Delta$ and $\Phi^+$. We will also use the notation $[\Delta/\phi]$ for a set of representatives of the orbits of $\phi$ on $\Delta$. 

\sk 
Let $\U$ (resp. $\U^-$) be the unipotent radical of $\B$ (resp. the opposite Borel subgroup $\B^-$). For any root $\alpha$, we denote by $\U_\alpha$ the one-parameter subgroup and $u_\alpha : \F \longrightarrow \U_\alpha$ an isomorphism. Note that the groups $\U$ and $\U^-$ are $F$-stable whereas $\U_\alpha$ might not be. However, we may, and we will, choose the family $(u_\alpha)_{\alpha \in \Phi}$ such that the restriction to $\U_\alpha$ of the action of $F$ satisfies $ F(u_{\alpha}(\zeta))  = u_{\phi(\alpha)}( \zeta^{q_\alpha^\circ})$ where $q_\alpha^\circ$ is some power of $p$ defined by the relation $F(\phi(\alpha)) = q_\alpha^\circ \,\alpha$.  We define  $d_\alpha$ to be the length of the orbit of $\alpha$ under the action of $\phi$ and  we set $q_\alpha = q_\alpha^\circ q_{\phi(\alpha)}^\circ \cdots q_{\phi^{d_\alpha-1}(\alpha)}^\circ$. Then $\U_\alpha$ is stable by $F^{d_\alpha}$ and $\U_\alpha^{F^{d_\alpha}} \simeq \F_{q_\alpha}$. 

\sk

Let us consider the derived group $D(\U)$ of $\U$. For any total order on $\Phi^+$, the product map induces the following isomorphism of varieties:

\centers{$ D(\U) \, \simeq \hskip-3mm  \displaystyle \prod_{\alpha \in \Phi^+ \setminus \Delta} \hskip-2mm \U_\alpha$.}

\noindent Note that it is not a group isomorphism in general, since $D(\U)$ might not be abelian. However, the canonical map $\prod_{\alpha \in \Delta} \U_\alpha \longrightarrow \U / D(\U)$ induces an isomorphism of algebraic groups commuting with $F$. In order to give a description of the rational points of this group, we introduce the one-parameter group $\V_\alpha$ as the image of the following morphism:

\centers{$ v_\alpha : \zeta \in \F \longmapsto \displaystyle \prod_{i=0}^{d_\alpha-1} F^i(u_\alpha(\zeta)) \, = \,  \prod_{i=0}^{d_\alpha-1} u_{\phi^i(\alpha)} (\zeta^{q_\alpha^\circ \cdots q_{\phi^{i-1}(\alpha)}^\circ})  \ \in \, \prod_{\alpha \in \Delta} \U_\alpha$.}

\noindent Then $\V_\alpha$ is $F$-stable and we obtain a canonical isomorphism of algebraic groups:

\centers{$ U / D(\U)^F \, \simeq \, (\U/ D(\U))^F \, \simeq  \hskip-2mm \displaystyle \prod_{\alpha \in [\Delta/\phi]} \hskip-1.5mm V_\alpha
% \, \simeq \hskip-3mm \displaystyle \prod_{\alpha \in [\Delta/\phi]} \hskip-2mm \F_{q_\alpha}
$.} 

We give now a construction of the quotient $D(\U)^F \backslash \B$ which will be useful in section 3. Towards this aim, we define the $F$-group $\B_{\Delta}$ to be

\centers{$ \B_{\Delta} \, = \, \Big( \displaystyle \prod_{\alpha \in \Delta} \U_\alpha \Big) \rtimes \T \, \simeq \, D(\U) \backslash \B$}

\noindent so that the total order on $\Phi^+$ chosen to describe $D(\U)$ gives rise to an isomorphism of varieties $\B \simeq D(\U) \times \B_\Delta$ which commutes with $F$. If $b \in \B$, we denote by $(b_D,b_\Delta)$ its image under this isomorphism. Conversely, we can embed the variety $\B_\Delta$ into $\B$ by considering the unique section $\iota : \B_\Delta \longrightarrow \B$ such that $b = b_D \iota(b_\Delta)$. We should notice again that if $W$ is not abelian, $\iota$ is only a morphism of algebraic varieties. With these notations, we obtain the following realization of $D(\U)^F \backslash \B$:

\begin{prop} The map $\varphi :  b \in \B \longmapsto  (b^{-1}F(b),b_\Delta) \in \B \times \B_\Delta $ induces the following isomorphism of varieties:

\centers{ $D(\U)^F \backslash \B \, \simeq \, \big\{ (\bar b, h) \in \B \times \B_\Delta \ \big| \ \bar b_\Delta = \mathcal{L}_\Delta(h) \big\}$}

\noindent where $\mathcal{L}_\Delta$ denote the Lang map corresponding to the $F$-group structure on $\B_\Delta$. 

\end{prop}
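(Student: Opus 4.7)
The plan has three stages: check that $\varphi$ takes values in the claimed subvariety, verify that it is $D(\U)^F$-invariant with the correct fibers, and finally upgrade the induced bijection to an isomorphism of varieties using the Lang--Steinberg theorem.

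First I would observe that the projection $\pi : \B \twoheadrightarrow \B_\Delta$, $b \mapsto b_\Delta$, is an $F$-equivariant group homomorphism with kernel $D(\U)$ (this is built into the definition of $\B_\Delta$). Applying $\pi$ to the Lang cocycle of $b$ therefore yields $\pi(b^{-1}F(b)) = \pi(b)^{-1}F(\pi(b)) = \mathcal{L}_\Delta(b_\Delta)$, which is precisely the defining condition of the target variety. Next, for $d \in D(\U)^F$ one has $(db)^{-1}F(db) = b^{-1}F(b)$ since $F(d)=d$, and $(db)_\Delta = b_\Delta$ since $d \in \ker\pi$, so $\varphi$ factors through $D(\U)^F \backslash \B$.

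For injectivity of the induced morphism: if $\varphi(b) = \varphi(b')$, the equality of the second components gives $b'b^{-1} \in \ker\pi = D(\U)$, while the equality of the first components gives $b'b^{-1} \in \B^F$. Since $D(\U)$ is $F$-stable, $b'b^{-1}$ lies in $D(\U) \cap \B^F = D(\U)^F$. For surjectivity, given $(\bar b, h)$ in the target variety, I would look for $b$ of the form $c\,\iota(h)$ with $c \in D(\U)$. Writing $\iota(h)^{-1}F(\iota(h)) = d_1\,\iota(\mathcal{L}_\Delta(h))$ for some $d_1 \in D(\U)$ (such $d_1$ exists because $\pi \circ \iota = \mathrm{id}_{\B_\Delta}$ commutes with $F$), the equation $b^{-1}F(b) = \bar b$ rewrites, after conjugation by $\iota(h)$ and use of the normality of $D(\U)$ in $\B$, as a Lang equation $c^{-1}F(c) = y$ for an explicit $y \in D(\U)$. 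Since $D(\U)$ is connected, Lang--Steinberg provides the required $c$.

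The remaining point, and the main obstacle, is to promote the resulting bijective morphism to an isomorphism of varieties: in positive characteristic, bijective morphisms need not be isomorphisms. I would handle this by observing that $\varphi$ fits into a Cartesian square whose vertical side is the Lang map $\mathcal{L}\colon D(\U) \to D(\U)$; the latter is a finite \'etale Galois cover with group $D(\U)^F$, so $\varphi : \B \to Y$ inherits this structure, making the induced map $D(\U)^F \backslash \B \to Y$ an isomorphism of varieties.
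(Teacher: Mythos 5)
Your proof is correct, and the first two stages (image contained in the target, fibers equal to $D(\U)^F$-orbits) mirror the paper's computation closely. Where you diverge is in the final upgrade to an isomorphism of varieties. The paper does not construct a Cartesian square: it shows directly that $\varphi:\B\to\mathrm{Im}\,\varphi$ is \'etale by passing to quotients by the finite group $B=\B^F$ (both $\B\to B\backslash\B$ and $\mathrm{Im}\,\varphi\to B\backslash\mathrm{Im}\,\varphi$ are \'etale, and the induced map on quotients identifies, via the first projection, with the Lang isomorphism $Bb\mapsto b^{-1}F(b)$ of $B\backslash\B$ with $\B$), and then concludes from the fiber computation. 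You instead realize $\varphi$ as a base change of the Lang map of $D(\U)$. This works, but note that you have not actually written down the Cartesian square: the bottom horizontal arrow $Y\to D(\U)$ must send $(\bar b,h)$ to $\iota(h)\bigl(\bar b_D\,\bigl((\iota(h)^{-1}F(\iota(h)))_D\bigr)^{-1}\bigr)\iota(h)^{-1}$, which is precisely the element $y$ appearing in your surjectivity argument, viewed as a morphism in the variables $(\bar b,h)$. Once this map is made explicit and the square is checked to commute (which is the same computation you do for surjectivity), the argument is complete. The trade-off: your route produces the inverse morphism explicitly, which makes surjectivity transparent, but forces you to exhibit the somewhat awkward twisted map $Y\to D(\U)$; the paper's route avoids any auxiliary morphism by leaning on the free left $B$-action, at the cost of leaving surjectivity as an unstated consequence. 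Both ultimately rest on Lang's theorem for a connected group ($\B$ in the paper, $D(\U)$ in yours).
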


\begin{proof} Let $b$ be an element of $\B$; by definition, we can decompose it into $b=b_D \iota(b_\Delta)$ and hence compute  $b^{-1}F(b) = \big(b_D^{-1} F(b_D)\big){}^{\iota(b_\Delta)} \ \iota(b_\Delta)^{-1} F(\iota(b_\Delta))$. As a consequence, we get

\centers{$ \big(b^{-1}F(b) \big)_\Delta  \, = \, \big(\iota(b_\Delta)^{-1} F(\iota(b_\Delta)) \big)_\Delta \, = \, \mathcal{L}_\Delta(b_\Delta).$}

\noindent Therefore, the image of $\varphi$ is the set of pairs $(\bar b, h) \in \B \times \B_\Delta$ such that $\bar b_\Delta = \mathcal{L}_\Delta(h)$. 
\sk

As another application of the computation of $b^{-1}F(b)$, one can readily check that the fiber at a point $\varphi(b)$ is exactly the set $D(\U)^F b$.

\sk

Finally, we prove that $\varphi : \B \longrightarrow \mathrm{Im}\, \varphi$ is \'etale, which will prove the assertion of the proposition. Since the maps $\B \longrightarrow B\backslash \B$ and $\mathrm{Im}\, \varphi \longrightarrow B\backslash \mathrm{Im}\, \varphi$ are \'etale, it is sufficient to show that the induced map  $\varphi' :  B \backslash \B \longrightarrow B\backslash \mathrm{Im}\, \varphi$ is an isomorphism. But by the first projection $B \backslash \mathrm{Im} \, \varphi \longrightarrow \B$, $\varphi'$ identifies with the canonical isomorphism $Bb \in B\backslash \B \longmapsto b^{-1}F(b) \in \B$.
\end{proof}

%With these notations, using the fact that $b^{-1}F(b) = {}^{\iota(b_\Delta)} \big(b_D^{-1} F(b_D)\big)\, \iota(b_\Delta)^{-1} F(\iota(b_\Delta))$, one can easily check that

%\centers{$ \big(b^{-1}F(b) \big)_\Delta  \, = \, \big(\iota(b_\Delta)^{-1} F(\iota(b_\Delta)) \big)_\Delta \, = \, \mathcal{L}_\Delta(b_\Delta)$}

\bk

\noindent \textbf{3 - Deligne-Lusztig varieties}

\mk Following \cite[Section 11.2]{BR1}, we fix a set of representatives $\{\dot w\}$ of $W$ in $N_\G(\T)$ and we define, for $w \in W$, the Deligne-Lusztig varieties $\X(w)$ and $\Y(\dot w)$ by:

\sk

\centers{$ \begin{psmatrix}[colsep=2mm,rowsep=10mm] \Y(\dot w) & = \, \big\{ g\U \in \G / \U \ \big| \ g^{-1}F(g) \in \U \dot w \U \big\} \\
					\X(w) & = \, \big\{ g\B \in \G / \B \ \big| \ g^{-1}F(g) \in \B w \B \big\} 
\psset{arrows=->>,nodesep=3pt} 
\everypsbox{\scriptstyle} 
\ncline{1,1}{2,1}<{\pi_w}>{/ \, \T^{wF}}		
\end{psmatrix}$}

\sk

\noindent where $\pi_w$ denotes the restriction to $\Y(\dot w)$ of the canonical projection $\G/\U \longrightarrow \G/\B$. They are both quasi-projective varieties endowed with a left action of $G$ by left multiplication. Furthermore, $\T^{wF}$ acts on the right of $\Y(\dot w)$ and $\pi_w$ is isomorphic to the corresponding quotient map, so that it induces a $G$-equivariant isomorphism of varieties $\Y(\dot w) / \T^{wF} \simeq \X(w)$. 

\sk

We introduce now  the general framework for the modular representation theory of $G$: we choose a prime number $\ell$ different from $p$ and we consider an $\ell$-modular system $(K,\Lambda,k)$ consisting on a finite extension $K$ of the field of $\ell$-adic numbers $\mathbb{Q}_\ell$, the normal closure $\Lambda$ of the ring of $\ell$-adic integers in $K$ and the residue field $k$ of the local ring $\Lambda$. We assume moreover that the field $K$ is big enough for $G$, so that it contains the $e$-th roots of unity, where $e$ is the exponent of $G$. In that case, the algebra $KG$ is split semi-simple.
\sk 

By considering the object $\R(\Y(\dot w), \Lambda)$ of the category $\mathcal{D}^b(\Lambda G$-$\mathrm{mod}$-$\Lambda \T^{wF})$, we define a pair of biadjoint functors between the derived categories $\mathcal{D}^b(\Lambda G$-$\mathrm{mod})$ and $\mathcal{D}^b(\Lambda \T^{wF}$-$\mathrm{mod})$, called the \textbf{Deligne-Lusztig induction and restriction functors}:

\vskip -4pt \leftcenters{and}{$\begin{array}[b]{r@{\, \ = \ \,}l}  \mathcal{R}_{\dot w} & \R(\Y(\dot w),\Lambda) \, {\ol}_{\T^{wF}} - \ : \,  \mathcal{D}^b(\Lambda \T^{wF}\mathrm{\text{-}mod}) \longrightarrow \mathcal{D}^b(\Lambda G\mathrm{\text{-}mod}) \\[6pt] 
{}^*\mathcal{R}_{\dot w} & \mathrm{R}\mathrm{Hom}_{\Lambda G}^\bullet(\R(\Y(\dot w),\Lambda), -) \ : \,  \mathcal{D}^b(\Lambda G\mathrm{\text{-}mod}) \longrightarrow \mathcal{D}^b(\Lambda \T^{wF}\mathrm{\text{-}mod}) \end{array}$}

\noindent where $\mathrm{Hom}_{\Lambda G}^\bullet$ denotes the classical bifunctor of the category of complexes of $\Lambda G$-modules. Tensoring by $K$, they induce adjoint morphisms $\mathrm{R}_w$ and ${}^*\mathrm{R}_w$ between the corresponding Grothendieck groups, known as the original induction and restriction defined in \cite{DL}. Note that when $w=1$, these functors are the reflect of the Harish-Chandra induction and restriction, which are more simply defined at the level of the modules categories.

\bk

\noindent \textbf{4 - Gelfand-Graev modules}

\mk In this subsection, we present the basic definitions and results concerning the Gelfand-Graev representations, before stating the main theorem. We discuss then the strategy suggested by Bonnaf\'e and Rouquier in \cite{BR}, coming from some geometric observations in the case of a Coxeter element.

\sk

Let $\psi \, : \, U \longrightarrow \Lambda^\times$ be a linear character of $U$. We assume that $\psi$ is trivial on $D(\U)^F$; it is not a strong condition since the equality $D(\U)^F = D(U)$ holds in most of the cases (the only exceptions for quasi-simple groups being groups of type $B_2$ or $F_4$ over $\F_2$ or groups of type $G_2$ over $\F_3$, see \cite[Lemma 7]{How}). For any $\alpha$ in $[\Delta/\phi]$, we denote by $\psi_\alpha$ the restriction of $\psi$ through $\F_{q_\alpha} \simeq V_\alpha \hookrightarrow U/D(\U)^F$. 

\begin{de} A linear character $\psi \, : \, U \longrightarrow \Lambda^\times$ is said to be \textbf{$G$-regular} if

\begin{itemize} 
\item $\psi$ is trivial on $D(\U)^F$;
\item $\psi_\alpha$ is non-trivial for any $\alpha$ in  $[\Delta/\phi]$.

\end{itemize}

\noindent In that case, we define the \textbf{Gelfand-Graev module} of $G$ associated to $\psi$ to be

\centers{$ \Gamma_{\psi}  \, = \, \mathrm{Ind}_U^G \, \Lambda_\psi$}

\noindent where $\Lambda_\psi$ denotes the $\Lambda U$-module of $\Lambda$-rank $1$  on which $U$ acts through $\psi$.\end{de}

%When the ambiant group is clear, we will use the notation $\Gamma_\psi$ instead of $\Gamma_{\psi,G}$.

%It is a projective and multiplicity free module (see \cite{St}). 

\begin{rmk}\ One can perform the same construction without assuming that $\psi$ is regular. However, Steinberg has shown in \cite{St} that in the case of a regular character, the module obtained is always multiplicity free. For example,
if $\G$ is a torus, then the unipotent group is trivial, and therefore there is only one Gelfand-Graev module, corresponding to the trivial character: the regular representation $\Lambda G$. \end{rmk}

\mk

The construction of the Gelfand-Graev modules is to some extent orthogonal from the Deligne-Lusztig functors. In this sense, we might try to understand the interactions between the two notions. The following theorem and main result of this article is a partial achievement in this direction; it asserts that the Deligne-Lusztig restriction of a Gelfand-Graev module is  a shifted Gelfand-Graev module. By the previous remark, it can be stated as follows:

\begin{thm}\label{gg}Let $\psi : U \longrightarrow \Lambda^\times$ be a $G$-regular linear character and $w$ be an element of $W$. Then

\centers{$\sRw \Gamma_{\psi} \, \simeq \, \Lambda \T^{wF} [-\ell(w)]$}

\noindent in the derived category $\mathcal{D}^b(\Lambda \T^{wF}$-$\mathrm{mod})$.

\end{thm}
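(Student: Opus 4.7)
The strategy is to convert $\sRw\Gamma_\psi$ into a $\psi$-twisted cohomology computation on $\Y(\dot w)$, to stratify $\Y(\dot w)$ via the Deodhar/\bb\ decomposition promised in Section~2, and to argue that only the top-dimensional stratum carries the $\psi$-isotype. Writing $\Gamma_\psi \simeq \Lambda G \otimes_{\Lambda U} \Lambda_\psi$ and using that induction from $U$ to $G$ is simultaneously left and right adjoint to restriction (since $[G:U]$ is finite), derived Frobenius reciprocity gives
\begin{equation*}
\sRw\Gamma_\psi \;\simeq\; \mathrm{RHom}_{\Lambda U}\bigl(\R(\Y(\dot w),\Lambda),\Lambda_\psi\bigr)
\end{equation*}
in $\mathcal{D}^b(\Lambda\T^{wF}\text{-}\mathrm{mod})$. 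Since $\psi$ is trivial on $D(\U)^F$, Proposition~\ref{coho}(ii) further replaces the left-hand cohomology by that of $\db\Y(\dot w)$, acted on by the residual quotient $U/D(\U)^F \simeq \prod_{\alpha\in[\Delta/\phi]} V_\alpha$, so the question reduces to computing a $(\psi_\alpha)_\alpha$-twisted equivariant cohomology of this quotient.

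Fix a reduced expression of $w$. The \bb--Deodhar construction promised in Section~2 gives a stratification of the Schubert cell $\B w\B/\B$ into locally closed pieces; pulling back along $\pi_w$ produces a $G \times \T^{wF}$-stable stratification $\Y(\dot w) = \bigsqcup_\gamma \mathrm{Y}_\gamma$ with a distinguished maximal piece $\Yo$, and the associated long exact sequences of compactly supported cohomology for the induced filtration reduce the computation of $\mathrm{RHom}_{\Lambda U}(-,\Lambda_\psi)$ to the individual strata. The crucial claim is the vanishing
\begin{equation*}
\mathrm{RHom}_{\Lambda U}\bigl(\R(\mathrm{Y}_\gamma,\Lambda),\Lambda_\psi\bigr) \;=\; 0
\end{equation*}
for every non-maximal $\gamma$. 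Such a piece lies in a Schubert cell strictly smaller than $\B w_0\B/\B$, so at least one simple root $\alpha$ is ``unused'' in the combinatorial data defining $\gamma$; the corresponding $V_\alpha$-action on $\mathrm{Y}_\gamma$ should then extend to an action of the connected algebraic group $\mathbf{G}_a$, and Proposition~\ref{coho}(iii) forces the induced $V_\alpha$-action on cohomology to be trivial. Since $\psi_\alpha$ is non-trivial by regularity of $\psi$, the $\psi$-isotypic part vanishes.

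The maximal piece $\Yo$, by contrast, enjoys the properties $(\star)$: its image under $\pi_w$ lies in the big cell $\B w_0\B/\B$, and the Lang realization of Section~2 identifies $\db\Yo$ as an explicit $\T^{wF}$-torsor over a product of $\ell(w)$ copies of $\mathbf{G}_m$ on which the $V_\alpha$'s act by Artin--Schreier-type translations along individual factors. The K\"unneth formula (Proposition~\ref{coho}(i)) together with the amalgamated-product cohomology formula of Section~1 factorises the computation into one elementary piece per simple-root factor, each contributing $\Lambda[-1]$ after twisting by $\psi_\alpha$; reassembling these contributions and keeping track of the $\T^{wF}$-action recovers $\Lambda\T^{wF}[-\ell(w)]$, as required. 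The hard part is the vanishing in the previous paragraph: one must, for each non-maximal Deodhar stratum, locate explicitly the extra root direction and verify that the $V_\alpha$-action there genuinely extends to $\mathbf{G}_a$ in a stratification-compatible way. This is exactly where the refinement beyond the coarser Schubert stratification, and the specific Bia{\l}ynicki-Birula model of the Deodhar pieces used in Section~2, become indispensable.
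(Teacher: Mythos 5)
Your high-level strategy — convert $\sRw\Gamma_\psi$ to a $\psi$-isotype computation on $\Y(\dot w)$ via Frobenius reciprocity, stratify, kill all but the top stratum by exhibiting an extended $\mathbf{G}_a$-action, and compute the top stratum via Artin--Schreier/K\"unneth — is indeed the skeleton of the paper's proof, and the first reduction and the treatment of the top stratum are both essentially right. But you have collapsed two genuinely distinct decompositions into one, and the collapse hides the argument that actually carries the vanishing.

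The paper works in two stages. First, $\Y(\dot w)$ is stratified by the Bruhat position of the \emph{source} coset $g\U$, giving the pieces $\Y_x(\dot w)$ for $x \in W$; for $x \neq w_0$ the vanishing of $e_\psi\R(\Y_x(\dot w),\Lambda)$ is Proposition~\ref{ext}--Corollary~\ref{cor1}, and the extended $\mathbf{G}_a$-action there is a nontrivial Lang-map construction on $D(\U)^F\backslash\tilde{\mathrm{Y}}_x(\dot w)$ that uses $x^{-1}(\alpha)>0$. Second, \emph{inside} the top piece $\Yo$ — and only there — the paper applies Deodhar's decomposition of $\B w\B\cap\U^-$ (pulled back through $u\mapsto {}^{w_0}(u^{-1}F(u))$, which is only well-defined because in the big cell there is a canonical representative $u\in\U$) to get the further pieces $\Y_\gamma$, $\gamma\in\Gamma_1$. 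These $\Y_\gamma$ all live in the same Schubert cell $\B w_0\cdot\B$. Your sentence ``Such a piece lies in a Schubert cell strictly smaller than $\B w_0\B/\B$'' is therefore false for these strata, and the stated mechanism for their vanishing does not apply. What actually kills them is the combinatorics of $I(\gamma)\setminus J(\gamma)\neq\emptyset$: via Proposition~3.5 this produces a nonzero $n_\alpha(\gamma)$, hence an affine factor in the Artin--Schreier model $\X_{q_\alpha}(n_\alpha,m_\alpha)$ that absorbs the $\F_{q_\alpha}$-translation into a $\mathbf{G}_a$-action, and only then does Proposition~\ref{coho}(iii) give vanishing. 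Relatedly, your proposal to ``pull back the Deodhar stratification of $\B w\B/\B$ along $\pi_w$'' does not make sense as written: $\pi_w$ is the quotient by $\T^{wF}$, not the Lang map, and the Lang map $g\B\mapsto g^{-1}F(g)\B$ is not well-defined as a map to $\B w\cdot\B$ until you have fixed representatives, which is exactly what the Schubert stratification of the source provides. So the two-level structure is not a presentational choice; it is forced. Your write-up does gesture at the need for a ``refinement beyond the coarser Schubert stratification,'' but without separating the two layers the proof does not close.
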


Such a result was already known for some specific elements of the Weyl group, namely for the trivial element (see \cite[Proposition 8.1.6]{Car}) and for a Coxeter element (see \cite[Theorem 3.10]{BR}). 
As explained in the introduction, both of these cases suggested to Bonnaf\'e and Rouquier that a stronger result should hold.  Before recalling it, we need to introduce some more notations. For $x \in W$, we will denote by $\B x \cdot \B$ the unique $\B$-orbit of $\G / \B$ containing $x$, and will refer to it as the \textbf{Schubert cell} corresponding to $x$. Following \cite{BR}, we now define the pieces of the Deligne-Lusztig varieties:

\leftcenters{and}{$ \begin{array}[b]{rl} \Y_x(\dot w) & \hskip -1.7mm = \, \big\{ g\U \in \B x \cdot \U \ \big | \ g^{-1}F(g) \in \U \dot w \U \big\} \\[6pt]
%& \hskip-1.7mm \simeq \, \big\{ b \in \B\cap {}^x \B^- \ \big| \ b^{-1}F(b) \in {}^x(\U \dot w \U) \big\}
%\end{array}$}
%\noindent where the last isomorphism is induced by $b \in \B \cap {}^x \B^- \longmapsto bx \U \in \B x \cdot \U$. The same construction holds for the variety $\X(w)$ :
%\centers{$ \begin{array}{rl} 
\X_x(w)& \hskip -1.7mm = \, \big\{ g\B \in \B x \cdot \B \ \big | \ g^{-1}F(g) \in \B w \B \big\}.
%& \hskip-1.7mm \simeq \, \big\{ u \in \U\cap {}^x \U^- \ \big| \ u^{-1}F(u) \in {}^x(\B w \B) \big\}
\end{array}$}

\noindent By the Bruhat decomposition and its analog for $\G/ \U$, one can check that this gives a filterable decomposition of the varieties $\Y(\dot w)$ and $\X(w)$. Finally, if $\psi : U \longrightarrow \Lambda^\times$ is a linear character of $U$, we denote by $e_\psi$ the corresponding idempotent, defined by

\centers{$e_\psi \, = \, \f{\psi(1)}{| U |}\ \, \SUM{u\in U}{} \ \, \psi(u^{-1}) u.$}

\noindent Since $U$ is a $p$-group, its order is invertible in $\Lambda$ and $e_\psi$ is an central element of $\Lambda U$. 

\sk
With these notations, the result conjectured by Bonnaf\'e and Rouquier in \cite[Conjecture 2.7]{BR} can be stated as follows:

\begin{thm}\label{br}Let $\psi : U \longrightarrow \Lambda^\times$ be a $G$-regular linear character and $w$ be an element of $W$. Then

\centers{$e_\psi \R(\Yxw, \Lambda) \, \simeq \, \left\{ \hskip -1.3mm \begin{array}{l} \Lambda \T^{wF} [-\ell(w)] \ \ \text{if } x=w_0 \\[4pt] 0 \ \ \text{otherwise} \end{array} \right.$}

\noindent in the derived category $\mathcal{D}^b(\mathrm{mod}$-$\Lambda \T^{wF})$.

\end{thm}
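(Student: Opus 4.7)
The plan is to combine the Bruhat-type parametrization of $\Y_x(\dot w)$ with the Deodhar decomposition discussed in section 2 of the paper, in such a way that the left $U$-action becomes explicit enough to read off the $\psi$-isotypic component at each piece.

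First, I would parametrize $\Y_x(\dot w)$ in coordinates. Using the analogue of the Bruhat decomposition for $\G/\U$, any $g\U \in \B x \cdot \U$ can be written uniquely as $u \dot x t v\U$ with $u \in U$, $t \in \T$ and $v$ in the unipotent subgroup $\U \cap \dot x^{-1} \U^- \dot x$. Translating the Deligne-Lusztig condition $g^{-1}F(g) \in \U \dot w \U$ into these coordinates presents $\Y_x(\dot w)$ as a closed subvariety of $U \times \T \times (\U \cap \dot x^{-1}\U^-\dot x)$ on which the left $U$-action is simply left translation on the $u$-factor and $\T^{wF}$ acts on the right through $t$.

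Second, I would apply Deodhar's decomposition to refine $\Y_x(\dot w)$ into $U$-stable pieces $\Yg$ indexed by distinguished subexpressions $\gamma$ of a chosen reduced word for $w$. In the \bb\ point of view of \cite{Mo}, each $\Yg$ is a locally trivial fibration over a product $(\G_m)^{a(\gamma)} \times \F^{b(\gamma)}$, and the decomposition is compatible with taking the $D(\U)^F$-quotient, so that the abelian quotient $U/D(\U)^F \simeq \prod_{\alpha \in [\Delta/\phi]} V_\alpha$ acts on $D(\U)^F \backslash \Yg$ one factor at a time.

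Third, for $x \neq w_0$ I would prove vanishing. Since $x < w_0$, for every Deodhar piece $\Yg$ of $\Y_x(\dot w)$ at least one simple orbit $\alpha \in [\Delta/\phi]$ is \emph{frozen}, in the sense that the $V_\alpha$-action on $D(\U)^F \backslash \Yg$ extends to a $\G_a$-action coming from a root subgroup of $\G$ transverse to the piece. Proposition \ref{coho}(iii) then forces $V_\alpha$ to act trivially on $\R(\Yg, \Lambda)$, and since $\psi_\alpha$ is non-trivial by $G$-regularity the idempotent $e_\psi$ annihilates the cohomology of every piece; the filterable decomposition gives $e_\psi \R(\Y_x(\dot w), \Lambda) = 0$.

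Fourth, for $x = w_0$ I would compute directly. In the big cell the coordinates above are globally defined, and conjugation by $\dot w_0$ interchanges $\U$ and $\U^-$, which simplifies the Deligne-Lusztig condition. Solving for $t$ in terms of the remaining coordinates identifies the maximal Deodhar piece (which is open and dense) with $U \times (\G_m)^{\ell(w)}$, on which $U \times \T^{wF}$ acts freely---this is exactly the analogue of the properties $(\star)$ from the Coxeter case. Combining K\"unneth, Proposition \ref{coho}(ii) for the $U$-quotient and Proposition \ref{coho}(iv) for the residual affine fibration reduces the problem to computing the cohomology of $(\G_m)^{\ell(w)}$ with its twisted $\T^{wF}$-action, which yields $\Lambda \T^{wF}[-\ell(w)]$ exactly as in the Coxeter computation of \cite{BR}.

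The hard part will be the third step: exhibiting, uniformly for every non-maximal Deodhar piece and every $x < w_0$, the extending $\G_a$-action whose restriction to $V_\alpha$ coincides with the canonical $U$-action. This is a purely combinatorial-geometric property of Deodhar's decomposition, and it is precisely where the \bb\ viewpoint of \cite{Mo} should be indispensable: it provides an intrinsic geometric origin for the extending one-parameter subgroup that the naive Bruhat decomposition cannot see.
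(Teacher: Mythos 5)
Your proposal has two genuine gaps, and it also misidentifies where the difficulty lies. For $x\neq w_0$ you propose a piece-by-piece argument on Deodhar cells, but you never actually exhibit the ``frozen'' simple orbit on every non-maximal cell of $\Yxw$, nor do you address whether those cells are stable under the candidate $\mathbf{G}_a$-action. The paper does not invoke Deodhar's decomposition here at all: it passes from $\Yxw$ to the fibration-equivalent variety $\Yt\subset\B$ and extends, globally, the $V_\alpha$-action on $D(\U)^F\backslash\Yt$ to the one-parameter group $\V_\alpha$ for \emph{any} simple $\alpha$ with $x^{-1}(\alpha)>0$ (Proposition~\ref{ext}); the existence of such an $\alpha$ is exactly the hypothesis $x\neq w_0$, and regularity of $\psi$ then annihilates the complex in one stroke (Corollary~\ref{cor1}). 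Contrary to your closing remark, this is the short, easy half of the argument, not the hard one.

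The more serious gap is in step 4. After restricting to $x=w_0$, the variety $\Y_{w_0}(\dot w)$ still decomposes into Deodhar pieces $\Y_\gamma$ indexed by distinguished subexpressions $\gamma$ ending at $1$, and you must prove $e_\psi\,\R(\Y_\gamma,\Lambda)\simeq 0$ for every $\gamma\neq(1,\ldots,1)$; your sketch treats only the maximal cell and silently drops the rest. This is where the paper's actual work lives: the parametrization of $D(\U)^F\backslash\X_\gamma$ by the combinatorial invariants $n_\alpha(\gamma)$ and $m_\alpha(\gamma)$, the abelian covering $\Y_\gamma''$ obtained via Abhyankar's lemma, and the cohomology of the Artin--Schreier-type varieties $\Y_{q,s}(n,m)$ following \cite{BR}. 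The key mechanism is that $\gamma\neq(1,\ldots,1)$ forces some $n_\alpha(\gamma)>0$, which produces an affine-line factor carrying a free $V_\alpha$-action. Your identification of the maximal cell with $U\times(\G_m)^{\ell(w)}$ under a free $U\times\T^{wF}$-action is also not correct: $U\backslash\Y_\gamma$ is an abelian covering of $(\G_m)^{\ell(w)}$ built from Lang-type curves, and $\Lambda\T^{wF}[-\ell(w)]$ emerges from the Bonnaf\'e--Rouquier computation of those curves, not from the cohomology of the torus itself.
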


 Note that the family of varieties $(\Y_x(\dot w))_{x \in W}$ is a filterable decomposition of $\Y(\dot w)$ with maximal element $\Y_{w_0}(\dot w)$ and that
%\centers
{$ \sRw \Gamma_\psi \, \simeq\, \mathrm{R} \mathrm{Hom}_{\Lambda U}^\bullet (\R(\Y(\dot w),\Lambda), \Lambda_\psi) \, \simeq \, \big(e_\psi \R (\Y( \dot w), \Lambda) \big)^\vee$} in the derived category $\mathcal{D}^b(\Lambda \T^{wF}$-$\mathrm{mod})$, so that theorem \ref{gg} follows indeed from this result.

\sk

To conclude this section, we move back attention to the family of varieties $(\Y_x(\dot w))_{x \in W}$  and $(\X_x(w))_{x \in W}$. By the isomorphism $\B \cap {}^x \B^- \simeq \B x\cdot \U$ which sends $b$ to the coset $bx\U$, and its analog for $\G/\B$, we obtain a new description of these varieties:

\leftcenters{and}{$ \begin{array}[b]{rl} \Y_x(\dot w) & \hskip -1.7mm \simeq \, \big\{ b  \in \B \cap {}^x \B^- \ \big | \ b^{-1}F(b) \in x(\U \dot w \U)F(x)^{-1} \big\} \\[6pt]
\X_x(w)& \hskip -1.7mm \simeq \, \big\{ u \in \U \cap {}^x\U^- \ \big | \ u^{-1}F(u) \in x(\B w \B)F(x)^{-1} \big\}
\end{array}$}

\noindent the right action of $\T^{wF}$ on $\Y_x(\dot w)$ being now twisted by $x$. With this description, the restriction of $\pi_w$ sends an element $b$ of $\Y_x(\dot w)$ to its  left projection on $\U$, that is, according to the decomposition $\B = \U \rtimes \T$.

\section{Deodhar's decomposition \label{ddec}}

We recall in this section the principal result of \cite{Deo}, using a different approach due to Morel (see \cite[Section 3]{Mo}) which relies on a general decomposition theorem, namely the \bb\ decomposition, applied to Bott-Samelson varieties.  We also give an elementary result on the filtration property of this partition. Both are the fundamental tools  we will be using in the next section.

\bk

\noindent \textbf{1 - Decomposition of a double Schubert cell}

\mk

Let $w \in W$ be an element of the Weyl group of $\G$.  The Schubert variety $\mathrm{S}_w$ associated to $w$ is the closure in $\G/\B$ of the Schubert cell $\B w\cdot \B$. This variety is not smooth in general, but Demazure has constructed in \cite{Dem} a resolution of the singularities, called the Bott-Samelson resolution, which is a projective smooth variety over $\mathrm{S}_w$. The construction is as follows: we fix a reduced expression $w=s_1 \cdots s_r$ of $w$ and we define the Bott-Samelson variety to be

\centers{$\mathbf{BS} = \P_{s_1} \times_\B \cdots \times_\B \P_{s_r} /\B$}

\noindent where $\P_{s_i}= \B \cup \B s_i\B$ is the standard parabolic subgroup corresponding to the simple reflection $s_i$. The homomorphism $\pi : \mathbf{BS} \longrightarrow \mathrm{S}_w$ which sends the class  $[p_1,\ldots,p_r]$ in $\mathbf{BS}$ of an element  $(p_1,\ldots,p_r) \in  \P_{s_1} \times \cdots \times \P_{s_r} $ to the class of the product $p_1 \cdots p_r$ in $\G/\B$ is called the \textbf{Bott-Samelson resolution}. It is a proper surjective morphism of varieties and it induces an isomorphism between $\pi^{-1}(\B w\cdot \B)$ and $\B w\cdot \B$.

\sk

Now the torus $\T$ acts naturally on $\mathbf{BS}$ by left multiplication on the first component, or equivalently by conjugation on each component,  so that $\pi$ becomes a $\T$-equivariant morphism. There are finitely many fixed points for this action, represented by the classes of the elements of $\Gamma = \{1,s_1\} \times \cdots \times \{1,s_r\}$ in $\mathbf{BS}$; such an element will be called a \textbf{subexpression} of~$w$. 

\sk

For a subexpression $\gamma = (\gamma_1,\ldots, \gamma_r) \in \Gamma$ of $w$, we denote by $\gamma^i = \gamma_1 \cdots \gamma_i$ the $i$-th partial subword and we define the two following sets:

\leftcenters{and}{ $\begin{array}[b]{r@{\ \, = \ \, }l} 
I(\gamma) & \big\{i \in \{1, \ldots, r \} \ | \ \gamma_i = s_i \big\} \\[4pt]
J(\gamma) & \big\{i \in \{1,\ldots, r \} \ | \ \gamma^i s_i < \gamma^i \big \}.\end{array}$}

\noindent With these notations,  Deodhar's decomposition theorem (see \cite[Theorem 1.1 and Corollary 1.2]{Deo}) can be stated as follows:

\begin{thm} [Deodhar, 84]\label{deo}There exists a set $\{D_\gamma\}_{\gamma \in \Gamma}$ of disjoint smooth locally closed subvarieties of $\B w\cdot \B$ such that: \begin{itemize}

\item[$\mathrm{(i)}$]  $D_\gamma$ is non-empty if and only if $J(\gamma) \subset I(\gamma)$;

\item[$\mathrm{(ii)}$]  if  $D_\gamma$ is non-empty, then it is isomorphic to $(\G_a)^{|I(\gamma)|- |J(\gamma)|} \times (\G_m)^{r-|I(\gamma)|}$ as a variety;

\item[$\mathrm{(iii)}$] for all $v \in W$, the double Schubert cell has the following decomposition:

\centers{$ \displaystyle \B w \cdot \B \cap \B^- v \cdot \B = \coprod_{\gamma \in \Gamma_v} D_\gamma$}

\noindent where $\Gamma_v$ is the subset of $\Gamma$ consisting of all subexpressions $\gamma$ such that $\gamma^r = v$.
\end{itemize} 

\end{thm}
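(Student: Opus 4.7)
Following Morel's reformulation, the plan is to derive Deodhar's decomposition from the Bialynicki-Birula theorem applied to the Bott-Samelson variety $\mathbf{BS}$. I would first pick a generic cocharacter $\lambda : \G_m \longrightarrow \T$, meaning $\langle \alpha, \lambda\rangle > 0$ for every $\alpha \in \Phi^+$. Since $\mathbf{BS}$ is smooth and projective with $\T$-fixed points exactly the classes of the subexpressions $\gamma \in \Gamma$, Bialynicki-Birula produces a $\T$-stable filterable decomposition $\mathbf{BS} = \coprod_{\gamma \in \Gamma} \mathbf{BS}_\gamma$ into locally closed affine cells, each contracting onto its fixed point $\gamma$ under $\lambda$.

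The core step is to make these cells explicit. I would argue by induction on $r$, using the projection $\mathbf{BS} \longrightarrow \mathbf{BS}' = \P_{s_1} \times_\B \cdots \times_\B \P_{s_{r-1}}/\B$ whose fibres are copies of $\P_{s_r}/\B \simeq \mathbb{P}^1$. Over the fixed point $\gamma' = (\gamma_1,\ldots,\gamma_{r-1})$, the two $\T$-fixed points of the fibre are $\B$ and $s_r\B$, and which one is attractive is determined by the sign of $\langle \gamma^{r-1}(\alpha_r),\lambda\rangle$, equivalently by the Bruhat comparison of $\gamma^{r-1}s_r$ against $\gamma^{r-1}$. Iterating yields an explicit product parametrisation of $\mathbf{BS}_\gamma$ as an affine space in which each coordinate sits in a precisely identified $\mathbb{A}^1$-factor of some $\P_{s_i}/\B$, indexed according to whether $\gamma_i = s_i$ and whether $\gamma^{i-1}s_i < \gamma^{i-1}$.

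To translate into Deodhar's statement, define $D_\gamma := \pi(\mathbf{BS}_\gamma \cap \pi^{-1}(\B w\cdot \B))$. Requiring the partial product $\gamma^i$ to remain reduced imposes constraints on the BB coordinates: at a position $i \notin I(\gamma)$ the corresponding parameter must be invertible (otherwise the image falls into a proper sub-Schubert variety), turning an $\mathbb{A}^1$-factor into a $\G_m$-factor; at a position $i \in I(\gamma)\cap J(\gamma)$ the parameter is absorbed by an identity of the shape $s_i\, u_{\alpha_i}(t) = u_{-\alpha_i}(t')\, s_i \, h$ combined with a $\B$-normalisation, and hence disappears; a position in $J(\gamma)\setminus I(\gamma)$ cannot be accommodated at all, so $D_\gamma = \emptyset$ unless $J(\gamma) \subset I(\gamma)$. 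This yields (i) and (ii); part (iii) follows because $\pi$ sends the cell $\mathbf{BS}_\gamma$ into the Schubert cell through $\gamma^r$, and the opposite Schubert cell $\B^- v\cdot\B$ in $\G/\B$ is itself the BB cell attached to $v$ for the same choice of $\lambda$, so the induced decomposition coincides with the one stated.

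The principal obstacle is the coordinate bookkeeping in the second paragraph: one must track simultaneously the attractive or repulsive character of each $\P_{s_i}/\B$-fibre relative to $\gamma^{i-1}$ and the non-vanishing conditions needed to land inside the open cell $\B w\cdot\B$. Once this geometry is in place, the combinatorics of $I(\gamma)$ and $J(\gamma)$, and the product structure $\G_a^{|I(\gamma)|-|J(\gamma)|} \times \G_m^{r-|I(\gamma)|}$, emerge essentially by inspection.
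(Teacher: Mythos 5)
Your proposal follows the same route as the paper: apply the Bialynicki--Birula decomposition to the Bott--Samelson variety via a generic cocharacter, identify the fixed points with subexpressions, and push the cells forward along the Bott--Samelson resolution, reading off the $\G_a/\G_m$ dichotomy from which coordinates are forced to vanish (those in $J(\gamma)$) versus forced to be nonzero (those outside $I(\gamma)$). The only divergence is that the paper simply cites H\"arterich's explicit computation of the BB cells (in the charts $a_\gamma$), whereas you propose to re-derive that computation by induction on $r$ through the $\mathbb{P}^1$-fibration $\mathbf{BS} \to \mathbf{BS}'$; this is a legitimate and more self-contained variant, though as you acknowledge the coordinate bookkeeping is exactly where the work sits. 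One phrase to tighten: the constraint producing the $\G_m$-factors is not that ``the partial product $\gamma^i$ remain reduced'' but that the image land in the open cell $\B w\cdot\B$ rather than in a lower Schubert cell of $\mathrm{S}_w$, which in the chart $a_\gamma$ is precisely $x_i\neq 0$ for $i\notin I(\gamma)$; likewise, coordinates at positions in $J(\gamma)$ are not ``absorbed'' but are set to zero by the BB cell condition itself.
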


\mk

\begin{rmk}\ In the first assertion, the condition for a cell $D_\gamma$ to be non-empty, that is $J(\gamma) \subset I(\gamma)$,  can be replaced by:

\centersright{$\forall \, i=2, \ldots, r \qquad  \gamma^{i-1} s_i < \gamma^{i-1} \ \Longrightarrow \ \gamma_i = s_i$.}{}

\noindent A subexpression $\gamma \in \Gamma$ which satisfies this condition is called a \textbf{distinguished subexpression}. For example, if $\G = \mathrm{GL}_3(\F)$ and $w=w_0 = s t s$, then there are seven distinguished subexpressions, the only one being not distinguished is $(s,1,1)$.
\end{rmk}

\begin{proof}[Sketch of proof:] the Bott-Samelson variety is a smooth projective variety endowed with an action of the torus $\T$. Let us consider the restriction of this action to $\mathbf{G}_m$ through a strictly dominant cocharacter $\chi : \mathbf{G}_m \longrightarrow \T$. Since this action has a finite number of fixed points, namely the elements of $\Gamma$, there exists a Bialynicki-Birula decomposition of the variety $\mathbf{BS}$ into a disjoint union of affine spaces indexed by $\Gamma$ (see \cite[Theorem 4.3]{BB}):

\centers{$ \displaystyle \mathbf{BS} = \coprod_{\gamma \in \Gamma} C^\gamma$}

\noindent In \cite{Ha}, H\"arterich has explicitly computed the cells $C^\gamma$. To describe this computation, we need some more notations:  the simple roots corresponding to the simple reflections of the reduced expression $w = s_1 \cdots s_r$ will be denoted by $\beta_1,\ldots,\beta_r$ and we set $\tilde \beta_i = \gamma^i(-\beta_i)$. Note that with these notations, one has

\centers{$ J(\gamma) = \big\{ i \in \{1, \ldots, r \} \ | \ \tilde \beta_i \in \Phi^+ \big\}.$}

\sk

\noindent Let us consider the open immersion $a_\gamma : \mathbb{A}_r \longrightarrow \mathbf{BS}$ defined by 

\centers{ $ a_\gamma(x_1,\ldots,x_r) = [u_{\gamma_1(-\beta_1)}(x_1) \gamma_1, \ldots, u_{\gamma_r(-\beta_r)}(x_r) \gamma_r ] .$}

\noindent Then one can easily check that $\pi^{-1}(\B w \cdot \B) = \mathrm{Im}(a_{(s_1,\cdots,s_r)})$. Moreover, H\"arterich's computations (see \cite[Section 1]{Ha}) show that for any subexpression $\gamma \in \Gamma$, one has: 

\centers{$ C^\gamma \, = \, a_\gamma\big(\{ (x_1, \ldots,x_r) \in \mathbb{A}_r \ | \ x_i = 0 \ \text{ if }\ i \in J(\gamma) \} \big).$}

\noindent Taking the trace of this decomposition with $\pi^{-1}(\B w\cdot \B)$, one obtains a decomposition of the variety $\pi^{-1}(\B w\cdot \B)$. Furthermore, the restriction of $\pi$ to this variety induces an isomorphism with $\B w\cdot \B$, and thus gives a partition of $\B w\cdot \B$ into disjoint cells:

\centers{$ \pi^{-1}(\B w\cdot \B) \, = \, \displaystyle \coprod_{\gamma \in \Gamma}  \pi^{-1}(\B w\cdot \B) \cap C^\gamma \, \simeq \, \coprod_{\gamma \in \Gamma}  \B w\cdot \B \cap \pi(C^\gamma) \, = \, \B w\cdot \B.$}

\noindent If we define $D_\gamma$ to be the intersection $\B w\cdot \B \cap \pi(C^\gamma)$, then it is explicitly given by:

\centers{$ D_\gamma \simeq \pi^{-1}(D_\gamma) \, = 
\, a_\gamma\big(\{ (x_1, \ldots,x_r) \in \mathbb{A}_r \ | \ x_i = 0 \ \text{ if }\ i \in J(\gamma) \ \text{ and } \ x_i \neq 0 \ \text{ if } \ i \notin I(\gamma) \} \big).$}

\noindent This description, together with the inclusion $\pi(C^\gamma) \subset \B^- \gamma^r \cdot \B$, proves the three assertions of the theorem.\end{proof}

\begin{example}\ \label{ex}In the case where $\G = \mathrm{GL}_3(\F)$, and $w = w_0 = sts$, one can easily describe the double Schubert cell $\B w\cdot \B \cap \B^-\cdot \B$. It is isomorphic to $\B w\B \cap \U^-$ by the map $u \mapsto u\B$. Besides, by Gauss reduction, the set $\B w \B w^{-1} = \B\B^-$ consists of all matrices whose principal minors are non-zero. Hence, 

\centers{$ \B w\B \cap \U^- \, = \, \left\{ \left( \begin{array}{ccc} 1 & 0 & 0 \\ b& 1 & 0 \\ c & a & 1 \end{array} \right) \Big| \ c \neq 0 \ \text{ and } \ ab-c \neq 0 \right\}.$}

\noindent Considering the alternative $a = 0$ or $a \neq 0$, one has $\B w\B \cap \U^- \simeq (\G_m)^3 \cup {\G_a} \times {\G_m}$, which is exactly the decomposition given by the two distinguished expressions $(1,1,1)$ and $(s,1,s)$. \end{example}

\mk

\begin{notations}\ \label{not}For a subexpression $\gamma \in \Gamma$, we define the sequence

\centers{$\begin{array}{r@{\ \, = \, \ }l} \Phi(\gamma) & \big(\gamma^i(-\alpha_i) \ \big| \  i=1,\ldots, \ell \ \text{and} \ \gamma^i(\alpha_i) > 0\big)  \\[4pt] & \big( \tilde \beta_i \ \big| \  i=1,\ldots, \ell \ \text{and} \ \gamma^i(\alpha_i) > 0\big). \end{array}$}

\noindent Using H\"arterich's computation for the cell $C^\gamma$ and the definition of $\pi$, one can see that each element of $\pi(C^\gamma) \subset \B^- \gamma^r \cdot \B$ has a representative in $\U^-$ of the form

\centers{$\displaystyle \prod_{\beta \in \Phi(\gamma)} u_\beta (x_\beta) \qquad \text{with each} \ \ x_\beta \in {\G_a}$,}

\noindent the product being taken with respect to the order on $\Phi(\gamma)$. At the level of $D_\gamma$, some of the variables $x_\beta$ must be non-zero (those corresponding to $\tilde \beta_i$ with $\gamma_i = 1$) but the expression becomes unique.  Note that this set of representatives is not contained in the variety $\B w \B \cap (\U^- \cap {}^v  \U^-)$ in general. However, this is the case for $v =1$, and we obtain in this way a parametrization of the variety $\B w \B \cap \U^-$. \end{notations}

\bk

\noindent \textbf{2 - Filtration property}

\mk

We keep the previous notations: $v$ and $w$ are two elements of the Weyl group such that $v \leq w$ and $w=s_1 \cdots s_r$ is a reduced expression of $w$. Deodhar has shown (see \cite[Proposition 5.3.(iv)]{Deo})  that among all the distinguished subexpressions of $w$ ending by $v$,  there exists one and only one for which the sets $J$ and $I$ are equal. For example, if $v=1$, it is clear that this subexpression is $(1,\ldots,1)$. By theorem \ref{deo}.(ii) the corresponding Deodhar cell has the following properties: 

\begin{itemize}
\item it is the unique maximal cell in $\B w \cdot \B \cap \B^- v \cdot \B$, and it is of dimension $\ell(w)- \ell(v)$;
\item it is a product of $\mathbf{G}_m$'s;
\item it is dense in $\B w \cdot \B \cap \B^- v \cdot \B$ (since this variety is irreducible by \cite{Ri}).
\end{itemize}

\noindent   In particular, the border of the maximal cell is a union of cells of lower dimensions. Unfortunately, this is not always true for the other cells (see \cite{Du}), and the decomposition is not a stratification in general. However, it is possible to describe some relations between the different closures, showing that it is  at least filterable. This is a general property for projective smooth varieties (see \cite{BB2}) but we shall give here a simple method for constructing the filtration. In this aim, we can embed 
%the Bott-Samelson variety 
$\mathbf{BS}$ into a product of flag varieties as follows: we define the morphism $\iota : \mathbf{BS} \longrightarrow (\G/\B)^r$ by

\centers{$  \iota([p_1,p_2, \ldots,p_r]) =(p_1\B,p_1p_2\B, \ldots, p_1 p_2 \cdots p_r \B).$ }

\noindent Note that $\pi$ is the last component of this morphism. Let $\gamma \in \Gamma$ be a subexpression of $w$. As a direct consequence of the construction of $C^\gamma$, one has 

\centers{$\iota(C^\gamma) \ \subset \ \displaystyle \prod_{i=1}^r \B^-\gamma^i \cdot \B. $}

\noindent Since $\mathbf{BS}$ is projective, $\iota$ is a closed morphism, and hence it sends the closure of a cell $C^\gamma$ in $\mathbf{BS}$ to the closure of $\iota(C^\gamma)$. Therefore, it is natural to consider a partial order on the set $\Gamma$ coming from to the Bruhat order on $W$ since it describes the closure relation for Schubert cells. For $\delta \in \Gamma$, we define

\centers{$ \delta \preceq \gamma \ \ \iff \ \ \gamma^i \leq \delta^i\ $ for all $\ i = 1, \dots, r.$}

\leftcentersn{Then, by construction:}{ $\overline{C^\gamma} \ \subset \ \displaystyle \bigcup_{\delta \preceq \gamma} C^\delta$ \quad and \quad $\overline{D_\gamma} \ \subset \ \displaystyle \bigcup_{\delta \preceq \gamma} D_\delta$}{clos}

\noindent where $\overline{D_\gamma}$ denotes the closure of $D_\gamma$ in the Schubert cell $\B w\cdot \B$. Furthermore, considering subexpressions in $\Gamma_v$ only would lead to a similar description for the closure in $\B w \cdot \B \cap \B^- v \cdot \B$. Therefore,

\begin{lem}\label{filt}Let $w,v \in W$ such that $v \leq w$ and $w=s_1 \cdots s_r$ be a reduced expression of $w$. Then there exists a numbering of $\Gamma_v = \{\gamma_0, \gamma_1, \ldots, \gamma_n\}$ and a sequence $(F_i)_{i=0,\ldots,n}$ of closed subvarieties of $\B w \cdot \B \cap \B^- v \cdot \B$ such that:

\begin{itemize}
\item $\gamma_0$ corresponds to the maximal cell;
\item $F_n = D_{\gamma_n}$;
\item $\forall \, i =0,\ldots,n-1$, \ \ $F_{i} \setminus F_{i+1} \, = \, D_{\gamma_i}$.
\end{itemize}
\end{lem}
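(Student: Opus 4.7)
The plan is to construct the filtration $(F_i)$ directly from the partial order $\preceq$ on $\Gamma_v$ and the closure inclusion (\ref{clos}). My first step would be to identify the dense Deodhar cell with the maximum of $(\Gamma_v, \preceq)$. Since $D_{\gamma_0}$ is dense in the irreducible variety $\B w \cdot \B \cap \B^- v \cdot \B$, its closure there exhausts the intersection; applying (\ref{clos}) to $\gamma_0$ (restricted to $\Gamma_v$, as mentioned in the paragraph preceding the lemma) forces every $\delta \in \Gamma_v$ to satisfy $\delta \preceq \gamma_0$, so $\gamma_0$ is the unique maximum of $\preceq$.

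Next I would choose any linear extension of the opposite partial order $\succeq$, producing a numbering $\Gamma_v = \{\gamma_0, \gamma_1, \ldots, \gamma_n\}$ characterized by the property
\centers{if $\delta \preceq \gamma_j$ in $\Gamma_v$ and $\delta \neq \gamma_j$, then $\mathrm{index}(\delta) > j$.}
Such a numbering exists for any finite poset (by a standard topological sort), and since $\gamma_0$ is the maximum of $\preceq$ it is forced into position zero, matching the first bullet of the lemma.

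Setting $F_i = \bigsqcup_{j \geq i} D_{\gamma_j}$, the equalities $F_n = D_{\gamma_n}$ and $F_i \setminus F_{i+1} = D_{\gamma_i}$ follow at once from the disjointness of the Deodhar cells. The only remaining point is that each $F_i$ is closed in $\B w \cdot \B \cap \B^- v \cdot \B$, which follows by combining (\ref{clos}) with the defining property of the numbering: for any $j \geq i$, the closure of $D_{\gamma_j}$ inside the intersection lies in $\bigcup_{\delta \preceq \gamma_j,\, \delta \in \Gamma_v} D_\delta$, and by construction every such $\delta$ has index at least $j \geq i$, hence sits inside $F_i$. So $F_i$ is a finite union of closed subvarieties.

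The main obstacle is really a matter of bookkeeping: one must keep track of conventions so that the \emph{maximum} rather than a minimum of $\preceq$ corresponds to the dense cell, and check that the closure inclusion (\ref{clos}), stated inside $\B w\cdot\B$, transports correctly to a statement about closures inside $\B w \cdot \B \cap \B^- v \cdot \B$ indexed by $\Gamma_v$. Once these conventions are fixed, the rest is formal.
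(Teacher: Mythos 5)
Your proof is correct and follows essentially the same route as the paper: choose a numbering of $\Gamma_v$ compatible with the opposite of $\preceq$ (a topological sort), set $F_i = \bigsqcup_{j \geq i} D_{\gamma_j}$, and verify closedness from the closure inclusion~\ref{clos}. You add a useful detail the paper leaves implicit, namely using density of $D_{\gamma_0}$ together with~\ref{clos} to show $\gamma_0$ is $\preceq$-maximal in $\Gamma_v$ and hence necessarily lands in position zero (strictly speaking this density argument only pins down the non-empty cells, but empty cells can be placed anywhere without affecting the $F_i$, so the construction goes through).
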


\begin{proof} We can choose any numbering which refines the order $\preceq$: the $(F_i)$ are then determined by the relation they must verify, and they are closed as a consequence of formula \ref{clos}.  
\end{proof}

\section{Proof of the main theorem\label{proof}}

In this section, we present a proof of theorem \ref{br}, which, as explained in section \ref{prel}.4, implies the main result of this article.  It is divided into two parts: in the first subsection, we deal with the variety $\Yxw$ for an element $x \in W$ which is assumed to be different from $w_0$. We show that there cannot be any regular character in the cohomology of this variety. The key point is to consider a specific quotient of this variety for which one can extend the action of at least one of the finite groups $V_\alpha$ up to the corresponding one-parameter subgroup $\V_\alpha$. The second subsection is devoted to the remaining piece, the variety $\Y_{w_0}(\dot  w)$. The crucial ingredient for studying its cohomology is the Deodhar decomposition, which has been recalled in the previous section. We shall lift this decomposition to obtain a nice partition of $\Y_{w_0}(\dot  w)$ and then compute the cohomology of the pieces using the results of \cite{BR}.

\bk

\noindent \textbf{1 - The variety $\Yxw$ for $x \neq w_0$}\mk

Throughout this section, $x$ will denote any element of the Weyl group $W$ different from $w_0$. In order to study the cohomology groups of the Deligne-Lusztig variety $\Yxw$, we define 

\centers{$ \Yt \, = \, \big\{b \in \B \ \big| \ b^{-1}F(b) \in x(\U \dot w \U) F(x)^{-1} \big\}.$}

\noindent It is a $B$-variety, endowed with a right action of $\T^{wF}$ obtained by right multiplication by the conjugate $x \T^{wF} x^{-1}$. Besides, it is related to 
%the Deligne-Lusztig variety 
$\Yxw$ via the map $\pi : b \in \Yt \longmapsto bx\U \in \Yxw$. One can readily check that this morphism has the following properties:

\begin{itemize}
\item it is a surjective smooth $B \times (\T^{wF})^{\mathrm{op}}$-equivariant morphism of varieties;
\item all the fibers of $\pi$ are isomorphic to an affine space of dimension $\ell(w_0) - \ell(x)$.
\end{itemize}

\noindent It is therefore sufficient to study the cohomology of $\Yt$. More precisely, one has by proposition \ref{coho}.(iv)

\leftcentersn{}{$ \R(\Yxw,\Lambda) \, \simeq \, \R(\Yt, \Lambda)[2(\ell(w_0)-\ell(w))]$}{tild}

\noindent in the derived category $\mathcal{D}^b(\Lambda B$-$\mathrm{mod}$-$\Lambda \T^{wF})$. We now prove the following result:

\begin{prop}\label{ext}Let $\alpha$ be a positive simple root such that $x^{-1}(\alpha) >0$. 
%We define the one-parameter subgroup $V_\alpha$ as the image of the morphism $\lambda \in \F \longmapsto u_\alpha(\lambda) F(u_\alpha(\lambda)) \cdots F^{d_\alpha-1}(u_\alpha(\lambda))$. 
Then the action of $V_\alpha$ on the quotient $D(\U)^F \backslash \Yt$ extends to an action of  $\V_\alpha$.
\end{prop}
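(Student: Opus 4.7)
The plan is to realize $D(\U)^F \backslash \Yt$ explicitly via the isomorphism provided by the preceding proposition, and then to construct the extended $\V_\alpha$-action directly on that concrete model. Composing with $\varphi: b \mapsto (b^{-1}F(b), b_\Delta)$ identifies $D(\U)^F \backslash \Yt$ with the subvariety
\[
\Big\{(\bar b, h) \in x(\U \dot w \U)F(x)^{-1} \times \B_\Delta \ \Big|\ \bar b_\Delta = \mathcal{L}_\Delta(h)\Big\}
\]
of $D(\U)^F \backslash \B$. Since $V_\alpha \subset B^F \subset \B_\Delta^F$, the given $V_\alpha$-action reduces in this picture to translation of the second coordinate, $v \cdot (\bar b, h) = (\bar b, vh)$, the first being left untouched because $v$ is $F$-fixed.

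For an arbitrary $v \in \V_\alpha$ I would set
\[
v \cdot (\bar b, h) := (\bar b',\, vh), \qquad \bar b' := (h_T^{-1}\, \tau\, h_T)\, \bar b,
\]
where $\tau := v^{-1}F(v) \in \V_\alpha$ (produced by Lang--Steinberg on the connected $F$-stable group $\V_\alpha$) and $h_T$ is the torus component of $h$ in the decomposition $\B_\Delta = \prod_{\alpha \in \Delta}\U_\alpha \rtimes \T$. A short computation using the abelianness of $\prod_{\alpha \in \Delta}\U_\alpha$ to make $\tau$ commute with the unipotent part of $h$ yields $\mathcal{L}_\Delta(vh) = (h_T^{-1}\tau h_T)\, \mathcal{L}_\Delta(h) = \bar b'_\Delta$, so the pair $(\bar b', vh)$ does sit in $D(\U)^F \backslash \B$. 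Moreover $\tau = 1$ when $v \in V_\alpha$, so $\bar b' = \bar b$ there and the construction does extend the given action.

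The main obstacle is showing that $\bar b'$ still belongs to $x(\U \dot w \U) F(x)^{-1}$. Writing $\bar b = x u_1 \dot w u_2 F(x)^{-1}$ and pulling $h_T^{-1}\tau h_T$ across $x$, the question reduces to checking that the conjugate $x^{-1}(h_T^{-1}\tau h_T)x$ lies in $\U$. Torus conjugation preserves each individual root subgroup, so $h_T^{-1}\tau h_T$ stays inside the product of root subgroups attached to the $\phi$-orbit of $\alpha$, and the positivity hypothesis $x^{-1}(\alpha) > 0$, combined with the $F$-stability of $\V_\alpha$, is the ingredient that should force this conjugate back into $\U$. I expect this positivity step to be the most delicate point of the proof. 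Once it is settled, checking that the assignment $v \mapsto (v \cdot -)$ defines an algebraic action of $\V_\alpha$ extending the $V_\alpha$-one is routine, following from the standard cocycle identity for the map $v \mapsto v^{-1}F(v)$ together with the commutativity already exploited above.
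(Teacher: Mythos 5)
Your explicit model of the quotient via $\varphi$, your formula $\bar b' = (h_T^{-1}\tau h_T)\bar b$, and your verification of the Lang compatibility are all essentially identical to what the paper does: writing $\tilde\alpha : \B_\Delta \to \G_m$ for the extension of the root $\alpha$ through $\B_\Delta \to \T$, one has $h_T^{-1}\tau h_T = u_\alpha\bigl(\tilde\alpha(h^{-1})(\zeta^{q_\alpha}-\zeta)\bigr)$, which is the paper's formula. So the strategy is the same. The gap is exactly at the step you flagged as ``the most delicate point,'' and it comes from a wrong claim upstream.

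You assert $\tau := v^{-1}F(v) \in \V_\alpha$, justified by Lang--Steinberg on an $F$-stable $\V_\alpha$. But $\V_\alpha$ is the one-dimensional ``diagonal'' subgroup $\{v_\alpha(\zeta)\}$ of $\prod_{\beta\in\Delta}\U_\beta$, and a direct computation in this abelian group shows
\[
v_\alpha(\zeta)^{-1}\,F\bigl(v_\alpha(\zeta)\bigr) \;=\; u_\alpha(\zeta)^{-1}\,u_\alpha(\zeta^{q_\alpha}) \;=\; u_\alpha\bigl(\zeta^{q_\alpha}-\zeta\bigr),
\]
because the factors $F^i(u_\alpha(\zeta))$ for $1\le i\le d_\alpha-1$ cancel telescopically. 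So $\tau$ does not lie in $\V_\alpha$ (except when $d_\alpha=1$ or $\tau=1$); it lies in the single root subgroup $\U_\alpha$, which is a \emph{stronger} and essential fact. Your weaker observation that $h_T^{-1}\tau h_T$ ``stays inside the product of root subgroups attached to the $\phi$-orbit of $\alpha$'' cannot close the argument: to push that product through $x^{-1}(\cdot)x$ into $\U$ you would need $x^{-1}\bigl(\phi^i(\alpha)\bigr) > 0$ for every $i$ in the orbit, and the hypothesis gives this only for $i=0$. There is no ``$F$-stability of $\V_\alpha$'' ingredient that rescues this. Once you record that $\tau\in\U_\alpha$, the last step is immediate rather than delicate: $h_T^{-1}\tau h_T\in\U_\alpha$ because torus conjugation preserves $\U_\alpha$, and $\U_\alpha\subset {}^x\U$ precisely because $x^{-1}(\alpha)>0$, whence $\bar b' \in {}^x\U\,\bar b \subset x(\U\dot w\U)F(x)^{-1}$.
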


\begin{proof} 
%As in the previous section, we must first realize the quotient $D(\U)^F \backslash \B$ by extracting $D(\U)$ from $\B$. In this aim, we define the $F$-group $\B_{\Delta}$ to be

%\centers{$ \B_{\Delta} \, = \, \Big( \displaystyle \prod_{\alpha \in \Delta} \U_\alpha \Big) \rtimes \T \, \simeq \, D(\U) \backslash \B$}

%\noindent and we fix an isomorphism of $F$-varieties $\B \simeq D(\U) \times \B_\Delta$ (which amounts to fix an order on $\Delta$). If $b \in \B$, we denote by $(b_D,b_\Delta)$ its image under this isomorphism. %Conversely, we can embed the variety $\B_\Delta$ into $\B$ by considering the unique section $\iota : \B_\Delta \longrightarrow \B$ such that $b = b_D \iota(b_\Delta)$. Note that if $W$ is not abelian, $\iota$ is not a group homomorphism. 
%Then, by considering the image of the morphism $ b \in \B \longmapsto  (b^{-1}F(b),b_\Delta) \in \B \times \B_\Delta $, one can easily check (see ?? for details) that 

%\noindent Using the fact that $b^{-1}F(b) = {}^{\iota(b_\Delta)} \big(b_D^{-1} F(b_D)\big)\, \iota(b_\Delta)^{-1} F(\iota(b_\Delta))$, one can easily check that

%\centers{$ \big(b^{-1}F(b) \big)_\Delta  \, = \, \big(\iota(b_\Delta)^{-1} F(\iota(b_\Delta)) \big)_\Delta \, = \, \mathcal{L}_\Delta(b_\Delta)$}

%\noindent where $\mathcal{L}_\Delta$ is the Lang map corresponding to the $F$-group structure on $\B_\Delta$. 

Considering the restriction to $\Yt$ of the map $ b \in \B \longmapsto  (b^{-1}F(b),b_\Delta) \in \B \times \B_\Delta $ studied in section \ref{prel}.2 leads to the following parametrization of the quotient:

\centers{$D(\U)^F \backslash \Yt \simeq \big\{ (\bar b,h) \in \B \times \B_\Delta  \ \big| \ \bar b _\Delta = \mathcal{L}_\Delta(h) \ \mathrm{and} \ \bar b \in x(\U \dot w \U) F(x)^{-1}\big\}.$}

\noindent We should notice that with this description, $U$ (and then $D(\U)^F \backslash U$) acts only on the second coordinate. More precisely, any element $u$ of $U$ acts by $ u \cdot (\bar b, h) \, = \, (\bar b, u_\Delta h)$. 

\sk

Through the quotient map $\B_\Delta \longrightarrow \T$, one can extend trivially any character of $\T$ to the group $\B_\Delta$. For the simple root $\alpha$, we will denote by $\tilde \alpha : \B_\Delta \longrightarrow \mathbf{G}_m$ the corresponding extension and we define an action of $\V_\alpha$ on the quotient variety $D(\U)^F \backslash \Yt$ by:

\centers{$ \forall \, \zeta \in \F \qquad v_\alpha(\zeta) \cdot (\bar b ,h) \, = \, \big(u_\alpha\big(\tilde \alpha(h^{-1}) ( \zeta^{q_{\alpha}}-\zeta)\big)\bar b, v_\alpha (\zeta) h\big).$}

\noindent Let us show that this is a well-defined action on $\Yt$: consider $(\bar b,h) \in \Yt$ and $(\bar b',h') = v_\alpha(\zeta) \cdot (\bar b, h)$ their image by $\zeta$. Using the fact that $\mathcal{L}_\Delta (v_\alpha(\zeta)) = u_\alpha (\zeta^{q_{\alpha}}-\zeta)$, we compute in the group $\B_\Delta$:

\centers{$ \begin{array}{r@{\ \, = \ \, }l} \mathcal{L}_\Delta(h')  & {h'}^{-1} F(h') \, = \,   h^{-1} \mathcal{L}_\Delta(v_\alpha(\zeta)) F(h) \\[4pt]
%& h^{-1} u_\alpha(\zeta^{q_{\alpha}}-\zeta) F(h) \\[4pt]
& h^{-1} u_\alpha(\zeta^{q_{\alpha}}-\zeta)h \,h^{-1} F(h) \\[4pt]
\mathcal{L}_\Delta(h') & u_\alpha\big(\tilde \alpha (h^{-1}) (\zeta^{q_{\alpha}}-\zeta) \big) \mathcal{L}_\Delta(h) 
  \end{array}$}
  
\noindent so that $\mathcal{L}_\Delta(h') = {\bar b'}_\Delta$ by definition of $\bar b'$. Moreover, the one-parameter subgroup $\U_\alpha$ is contained in ${}^x \U$ since $x^{-1} (\alpha) > 0$, and hence $\bar b' \in {}^x\U \bar b \subset x(\U \dot w \U)F(x)^{-1}$, which proves that $(\bar b',h') \in \Yt$.

\sk

To conclude, we remark that restricting this action to $V_\alpha$ amounts to restrict the parameter $\zeta$ to $\F_{q_\alpha}$. In that case, we clearly recover the natural action of $V_\alpha$ coming from the action of $D(\U)^F \backslash U$ we described before.
 \end{proof}

As an application, we show that, as expected in \cite[Conjecture 2.7]{BR}, the regular characters do not occur in the cohomology of the variety $\Yxw$. This is the first step for determining the isotypic part of these representations in the cohomology of the variety $\Y(\dot w)$, and hence proving theorem \ref{gg}.

\begin{cor}\label{cor1}Let $\psi : U \longrightarrow \Lambda^\times$ be a $G$-regular linear character, and $x$ an element of the Weyl group $W$ different from $w_0$. Then 

\centers{$ e_\psi \R(\Yxw,\Lambda) \simeq 0$}

\noindent in the derived category $\mathcal{D}^b(\mathrm{mod}$-$\Lambda \T^{wF})$.

\end{cor}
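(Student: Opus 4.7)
The plan is to apply Proposition \ref{ext} with a well-chosen simple root and then combine it with the regularity hypothesis on $\psi$. First I observe that since $x \neq w_0$, there exists a simple root $\alpha$ with $x^{-1}(\alpha)>0$: otherwise $x^{-1}$ would send every simple root to a negative root, forcing $x^{-1}=w_0$ and hence $x=w_0$. For such an $\alpha$, Proposition \ref{ext} extends the $V_\alpha$-action on $D(\U)^F\backslash\Yt$ to the connected one-parameter subgroup $\V_\alpha$.

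Using relation \ref{tild}, it is enough to prove that $e_\psi\,\R(\Yt,\Lambda)\simeq 0$. Since $\psi$ is trivial on $D(\U)^F$ by $G$-regularity, the functor $\Lambda_\psi\,{\ol}_{\Lambda U}\,(-)$ factors through the quotient by $\Lambda D(\U)^F$, and Proposition \ref{coho}(ii) yields

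\centers{$ e_\psi\,\R(\Yt,\Lambda) \, \simeq \, \Lambda_\psi \, {\ol}_{\Lambda(U/D(\U)^F)} \, \R(D(\U)^F\backslash\Yt,\Lambda).$}

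\noindent The isomorphism $U/D(\U)^F \simeq \prod_{\beta\in[\Delta/\phi]} V_\beta$ from Section \ref{prel}.2 and the corresponding factorization $\Lambda_\psi\simeq\bigotimes_\beta \Lambda_{\psi_\beta}$ then allow me to isolate the $V_\alpha$-factor and rewrite the whole expression as $\Lambda_{\psi_\alpha}\,{\ol}_{\Lambda V_\alpha}\,M$ for a suitable complex $M$ of $\Lambda V_\alpha$-modules.

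At this stage the conclusion will be immediate. By Proposition \ref{coho}(iii) applied to the connected extension $\V_\alpha$, the complex $M$ is already quasi-isomorphic to $\Lambda\,{\ol}_{\Lambda V_\alpha}\,M$, so it suffices to verify that $\Lambda_{\psi_\alpha}\,{\ol}_{\Lambda V_\alpha}\,\Lambda = 0$. Since $|V_\alpha|$ is invertible in $\Lambda$, the algebra $\Lambda V_\alpha$ is semisimple and the derived tensor product coincides with the ordinary one; the non-triviality of $\psi_\alpha$ ensured by $G$-regularity then forces the vanishing, as the idempotent $e_{\psi_\alpha}$ annihilates the trivial module.

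The only delicate point in this strategy is to ensure that every quasi-isomorphism above is compatible with the twisted right $\T^{wF}$-action on $\Yt$, so that the vanishing truly takes place in $\mathcal{D}^b(\mathrm{mod}$-$\Lambda \T^{wF})$. Fortunately this equivariance is built into Propositions \ref{ext} and \ref{coho}, so the verification should amount to careful bookkeeping rather than require any new idea.
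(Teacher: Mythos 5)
Your argument is correct and takes essentially the same route as the paper: reduce to $\Yt$ via formula \ref{tild}, observe that $x\neq w_0$ provides a simple root $\alpha$ with $x^{-1}(\alpha)>0$, invoke Proposition \ref{ext} to extend the $V_\alpha$-action to the connected group $\V_\alpha$, and use Proposition \ref{coho}(iii) together with the non-triviality of $\psi_\alpha$ to kill $e_\psi\,\R(D(\U)^F\backslash\Yt,\Lambda)$, then pass back to $\Yt$ using the triviality of $\psi$ on $D(\U)^F$ and Proposition \ref{coho}(ii). The only cosmetic difference is that you factor $\Lambda_\psi$ over the product decomposition of $U/D(\U)^F$ before isolating the $V_\alpha$-factor, whereas the paper applies $e_\psi$ directly and uses its centrality; the underlying mechanism is identical.
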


\begin{proof} By formula \ref{tild}, we can replace $\Yxw$ by $\Yt$. Since $x$ is different from $w_0$, there exists a positive simple root $\alpha$ such that $x^{-1}(\alpha) > 0$; by proposition \ref{ext}, the action of $V_\alpha$ on  $D(\U)^F\backslash \Yt$  extends to an action of the connected group $\V_\alpha$.
%, so that the induced action on $\R (D(\U)^F\backslash \Yt)$ is trivial (see ?? prop. ??).  
On the other hand, $\psi$ is a regular character, and hence its restriction to $V_\alpha$ is non-trivial. 
Therefore, we obtain by proposition \ref{coho}.(iii):

%the $\psi$-isotypic component of this complex is quasi-isomorphic to zero and we get
\vskip-6pt
\centers{$ e_\psi \R(D(\U)^F \backslash \Yt, \Lambda)  \, \simeq \, e_\psi \big(\Lambda {\ol}_{\Lambda V_\alpha} \, \R(D(\U)^F \backslash \Yt, \Lambda) \big)\, \simeq 0 $}

\noindent  Finally, since $\psi$ is trivial on $D(\U)^F$, we get
\vskip-3pt
\centers{$ e_\psi \R(\Yt, \Lambda)  \, \simeq \, \Lambda {\ol}_{\Lambda D(\U)^F} \, e_\psi \R(\Yt, \Lambda)  \, \simeq \, e_\psi \R(D(\U)^F \backslash \Yt, \Lambda) \, \simeq 0 $}

\noindent using the fact that $e_\psi$ is central in $\Lambda U$ and the proposition \ref{coho}.(ii).\end{proof}

\bk

\noindent \textbf{2 - The variety $\Yo$}\mk

In this section we are concerned with the cohomology of the Deligne-Lusztig variety $\Yo$ corresponding to the maximal Schubert cell. Our aim is to determine the contribution of the regular characters in these cohomology groups. For this purpose, we define a partition of the variety $\Xo$, coming from  Deodhar's decomposition of the double Schubert cell $\B w \cdot \B \cap \B^- \cdot \B$ and we show that these specific characters occur only in the cohomology of the maximal cell. We deduce the result at the level of $\Y$ using the same method as in \cite{BR}.\sk

We keep the basic assumptions and the notations of the section \ref{ddec}: $w=s_1 \cdots s_r$  is a reduced expression of $w$, $\gamma \in \Gamma_1$ is a distinguished subexpression of $w$ ending by $1$ and $D_\gamma$ denotes the corresponding Deodhar cell in $\B w \cdot \B \cap \B^- \cdot \B$. We will be interested in the pull-back $\Omega_\gamma$ of $D_\gamma$ in $\U^-$, for which we have a nice parametrization (see notations \ref{not}).  In these terms, Deodhar's decomposition can be written as

\centers{$ \B w \B \cap \U^- \, = \, \displaystyle \coprod_{\gamma \in \Gamma_1} \Omega_\gamma$}

%More precisely, if we define $\Omega_\gamma$ to be the inverse image by the isomorphism $u \in \U^- \longmapsto \B^- \cdot \B$ then every element of $\Omega_\gamma$ can be written in 

\noindent Now since $F(w_0) = w_0$, the maximal piece of the Deligne-Lusztig variety $\X(w)$ has the following expression (see section \ref{prel}.4):

\centers{$ \begin{array}{rl} \Xo \, = & \hskip -2mm \big\{g\B \in \B w_0 \cdot \B \ \big| \ g^{-1}F(g) \in \B w \B \big\} \\[4pt]
\simeq & \hskip -2mm \big\{u \in \U\ \big| \ {}^{w_0} (u^{-1}F(u)) \in \B w \B\cap \U^- \big\} \end{array}$}

\noindent so that if we define the piece $\X_\gamma$ to be:

\centers{$\X_\gamma \, = \, \big\{u \in \U \ \big| \  {}^{w_0} (u^{-1}F(u))  \in \Omega_\gamma\big\}$}

\leftcenters{then we get}{$ \Xo \, \simeq \,  \displaystyle \coprod_{\gamma \in \Gamma_1} \X_\gamma.$}

\noindent Note that each component of this partition is stabilized by the action of $U$, and that this isomorphism is $U$-equivariant. The same decomposition holds also for $\Yo$ if we define $\Y_\gamma$ to be the pullback $\pi_w^{-1}(\X_\gamma)$ where $\pi_w: \Y(\dot w) \longrightarrow \X(w)$ is the quotient map by the right action of $\T^{wF}$ defined in section \ref{prel}.3.  One can easily check that this defines a family of locally closed smooth subvarieties of $\Yo$ satisfying the filtration property of lemma \ref{filt}.

\mk

\begin{example}\ Let us go back to our example of $\mathrm{GL}_3(\F)$, endowed with the standard $\F_q$-structure (see example \ref{ex}). Given an element $u$ of $\U$ written as

\centers{$u \, =  \, \left( \begin{array}{ccc} 1 & a & c\\ 0 & 1 & b \\ 0 & 0& 1 \end{array} \right)$}

\leftcenters{one has}{$ {}^{w_0} (u^{-1}F(u)) \, = \, \left( \begin{array}{ccc} 1 & 0 & 0\\ b^q-b & 1 & 0 \\ c^q-c-a(b^q-b) & a^q -a & 1 \end{array} \right).$}

\noindent Together with the description of  $\B w_0 \B \cap \U^-$ given in the example \ref{ex}, we obtain

\centers{$ \X_{w_0}(w_0) \, \simeq \, \left\{ (a,b,c) \in (\G_a)^3 \, \Big| \, \begin{array}{l} c^q-c -a(b^q-b) \neq 0 \\[3pt] c^q-c -a^q(b^q-b) \neq 0 \end{array} \right\}.$}

\noindent Since we are concerned with regular characters only, it is more convenient to consider the quotient variety $\db \X_{w_0}(w_0)$. But taking the quotient by $D(\U)^F \simeq \F_q$ amounts to take the expression $C=c^q-c-a(b^q-b)$ as a variable, so that 

\centers{$ \db \X_{w_0}(w_0) \, \simeq \, \big\{ (a,b,C) \in (\G_a)^2\times \G_m \, \big| \, C-(a^q-a)(b^q-b) \neq 0 \big\}.$}

\noindent It remains to apply the partition of $\B w_0 \B \cap \U^-$ given in the example \ref{ex} to deduce a decomposition of this quotient into: \begin{itemize}

\item a closed subvariety $\X_C$ defined by the equation $a^q-a =0$. From the above description, it is clearly isomorphic to $\F_q \times \G_a \times \G_m$ with the product action of $\db U \simeq \F_q \times \F_q$ on the first two coordinates;

\item an open subvariety $\X_O$ defined by the equation $a^q-a \neq 0$. By the change of variables $C' = C/(a^q-a)$, it becomes isomorphic to

\centers{$\begin{array}{r@{\ \, \simeq \ \, }l} \X_O & \mathcal{L}^{-1}(\G_m) \times \big\{ (b,C') \in \G_a \times \G_m \ \big| \ C' \neq b^q-b \big\} \\[4pt] & \mathcal{L}^{-1}(\G_m) \times \big( (\G_a \times \G_m) \setminus \mathcal{L}^{-1}(\G_m) \big)\end{array}$}

\noindent where we have embedded $\mathcal{L}^{-1}(\G_m)$ into $\G_a \times \G_m$ by the map $b \longmapsto (b,b^q-b)$.

\end{itemize}

\noindent  It is now a straightforward matter to compute the cohomology groups of the varieties and it turns out that the cohomology of $\X_C$ does not carry any regular character whereas there is one occurring in the cohomology of the maximal cell $\X_O$. More precisely, for a regular character $\psi$, we get
%\centers
{$ e_\psi \H^\bullet (\db \X_{w_0}(w_0)) \, \simeq \, e_\psi \H^\bullet(\X_O) \, \simeq \, \Lambda_\psi[-3]$}
%\noindent 
as expected in theorem \ref{br}. \end{example}

\mk

Keeping in mind the previous example, we give now a description of the quotient $D(\U)^F \backslash \X_\gamma$ in terms of some combinatorial data, namely the sets $I(\gamma)$ and $J(\gamma)$ introduced in section \ref{ddec}. In oder to state this result, we need to introduce some more notations; for $m,n \in \mathbb{N}$ and $q$ a power of $p$, we define the following variety:

\centers{$ \X_q(n,m) \, = \, \big\{(\zeta, \mu_1, \ldots, \mu_n, \lambda_1, \ldots, \lambda_m) \in (\G_a)^{n+1}  \times (\G_m)^m \ \big| \ \zeta^{q} - \zeta = \mathop{\sum}\limits_{i} \mu_i + \mathop{\sum}\limits_{j} \lambda_j \big\}$} 

\noindent which is endowed with an action of $\F_q$ by translation on the first variable. 

\begin{prop} Let $\gamma$ be a distinguished subexpression of $w$. For $\alpha \in \Delta$ a simple positive root and $\mathcal{O}_\alpha$ the corresponding orbit in $\Delta / \phi$, we consider the following integers:
\begin{itemize} 

\item $n_\alpha(\gamma) = \big| \{i = 1, \ldots, r \ | \ w_0(\tilde \beta_i) \in  \mathcal{O}_\alpha \ \, \text{and} \, \ i \in I(\gamma) \setminus J(\gamma) \} \big|; $

\item $m_\alpha(\gamma) = \big| \{i = 1, \ldots, r \ | \ w_0(\tilde \beta_i) \in  \mathcal{O}_\alpha \ \, \text{and} \, \ i \notin I(\gamma) \} \big| $;

\item $\bar n(\gamma) = |I(\gamma)| -|J(\gamma)|  - \sum n_\alpha(\gamma)$;

\item $\bar m(\gamma) = r-|I(\gamma)| - \sum m_\alpha(\gamma)$.

\end{itemize}

\sk

\noindent Then there exists a $D(\U)^F \backslash U$-equivariant equivalence of \'etale sites

\centers{$ D(\U)^F \backslash \X_\gamma \, \cong (\G_a)^{\bar n(\gamma)} \times (\G_m)^{\bar m(\gamma)} \times \hskip -2mm \displaystyle \prod_{\alpha \in [\Delta/\phi]} \X_{q_\alpha}(n_\alpha(\gamma), m_\alpha(\gamma))$}

\noindent where $D(\U)^F \backslash U \simeq \prod_{\alpha \in [\Delta/\phi]} V_\alpha$ acts on the product  $\prod_{\alpha \in [\Delta/\phi]} \X_{q_\alpha}(n_\alpha(\gamma), m_\alpha(\gamma))$ via the identifications $V_\alpha \simeq \F_{q_\alpha}$.  Moreover, if $(\G,F)$ is split, this is an isomorphism of varieties.
\end{prop}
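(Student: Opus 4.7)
The plan is to combine the parametrization of $D(\U)^F \backslash \U$ coming from Proposition~1.3 with Deodhar's description of $\Omega_\gamma$ from Notations~3.5, and then solve the resulting system of Lang equations one $\phi$-orbit at a time. Restricting the map $\varphi$ of Proposition~1.3 to the subvariety $\U$ of $\B$ identifies $D(\U)^F \backslash \X_\gamma$ with the variety
\[\bigl\{(\bar u, h) \in \U \times \textstyle\prod_{\alpha \in \Delta} \U_\alpha \ \bigl| \ \bar u_\Delta = \mathcal{L}_\Delta(h) \ \text{and} \ {}^{w_0}\bar u \in \Omega_\gamma\bigr\},\]
on which $D(\U)^F \backslash U \simeq \prod_{\alpha \in [\Delta/\phi]} V_\alpha$ acts by left translation on the second coordinate. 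By Notations~3.5, the condition ${}^{w_0}\bar u \in \Omega_\gamma$ parametrizes $\bar u$ by coordinates $x_{\tilde\beta_i}$ indexed by $i \in \{1,\ldots,r\} \setminus J(\gamma)$, with $x_{\tilde\beta_i} \in \G_a$ when $i \in I(\gamma) \setminus J(\gamma)$ and $x_{\tilde\beta_i} \in \G_m$ when $i \notin I(\gamma)$; up to Chevalley signs, these $x_{\tilde\beta_i}$'s are precisely the coordinates of $\bar u$ at the positive roots $w_0(\tilde\beta_i)$.

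Next, I partition the indices $i$ according to the $\phi$-orbit of $w_0(\tilde\beta_i)$. Any index $i$ with $w_0(\tilde\beta_i) \in \Phi^+ \setminus \Delta$ contributes only to the derived part $\bar u_D$ and imposes no condition on $h$, so the corresponding coordinate is a free $\G_a$ or $\G_m$ factor; counting yields exactly the $(\G_a)^{\bar n(\gamma)} \times (\G_m)^{\bar m(\gamma)}$ piece of the claim.

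It remains to analyse, for each $\phi$-orbit $\mathcal{O}_\alpha$ with $\alpha \in [\Delta/\phi]$, the coordinates with $w_0(\tilde\beta_i) \in \mathcal{O}_\alpha$. A direct computation, using $F(u_\beta(y)) = u_{\phi(\beta)}(y^{q_\beta^\circ})$ and the commutativity of $\prod_{\beta \in \Delta} \U_\beta$ modulo $D(\U)$, shows that the $\phi^k(\alpha)$-coordinate of $\mathcal{L}_\Delta(h)$ equals $y_{\phi^{k-1}(\alpha)}^{q_{\phi^{k-1}(\alpha)}^\circ} - y_{\phi^k(\alpha)}$. Equating this with the corresponding component of $\bar u_\Delta$ (the sum of those $x_{\tilde\beta_i}$ with $w_0(\tilde\beta_i) = \phi^k(\alpha)$) and eliminating $y_{\phi(\alpha)}, \ldots, y_{\phi^{d_\alpha-1}(\alpha)}$ around the orbit via $y_{\phi^{d_\alpha}(\alpha)} = y_\alpha$ yields a single relation
\[\zeta^{q_\alpha} - \zeta \ = \ \sum_i \mu_i \ + \ \sum_j \lambda_j\]
on $\zeta = y_\alpha$, where the $\mu_i$ (resp.\ $\lambda_j$) are Frobenius-twisted copies of the $x_{\tilde\beta_i}$ for $i \in I(\gamma) \setminus J(\gamma)$ (resp.\ $i \notin I(\gamma)$) with $w_0(\tilde\beta_i) \in \mathcal{O}_\alpha$. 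This is precisely the defining equation of $\X_{q_\alpha}(n_\alpha(\gamma), m_\alpha(\gamma))$, and the translation action of $V_\alpha \simeq \F_{q_\alpha}$ on $\zeta$ is inherited from its action on $h$.

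The main subtlety, and the reason the conclusion is stated as an equivalence of étale sites rather than a strict isomorphism, is that replacing each $x_{\tilde\beta_i}$ by its Frobenius twist $\mu_i$ or $\lambda_j$ is a purely inseparable substitution, hence a universal homeomorphism inducing an equivalence of étale topoi but not an isomorphism of varieties in general. In the split case one has $d_\alpha = 1$ and $q_\alpha^\circ = q_\alpha$ for every $\alpha$, no twist occurs, and the construction produces a genuine isomorphism. The remaining verifications — tracking Chevalley signs from $w_0$-conjugation, observing that reordering products inside $\U$ only alters $\bar u_D$ modulo $D(\U)$, and confirming the combinatorial identities $|I(\gamma)| - |J(\gamma)| = \bar n(\gamma) + \sum_\alpha n_\alpha(\gamma)$ and $r - |I(\gamma)| = \bar m(\gamma) + \sum_\alpha m_\alpha(\gamma)$ — are routine bookkeeping.
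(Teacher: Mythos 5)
Your proposal reproduces the paper's argument almost step by step: realize $D(\U)^F\backslash \X_\gamma$ via the section $\varphi$ of Proposition~1.3, parametrize ${}^{w_0}\Omega_\gamma$ by the Deodhar coordinates of Notations~3.5, sort them by whether $w_0(\tilde\beta_i)$ is simple and by its $\phi$-orbit, eliminate $\zeta_{\phi(\alpha)},\ldots,\zeta_{\phi^{d-1}(\alpha)}$ around each orbit of the Lang equations, and observe that the residual Frobenius twists $x_\beta\mapsto x_\beta^{q_\beta}$ are universal homeomorphisms (trivial when $(\G,F)$ is split). This is exactly the paper's proof, including the explanation of why the conclusion is only an equivalence of étale sites in the non-split case.
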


\begin{proof} Recall that we have chosen in section \ref{prel}.2 an isomorphism of groups between $D(\U) \backslash \U$ and the abelian group $\prod_{\alpha \in \Delta} \, \U_\alpha \subset \B_\Delta$. As in the proof of proposition \ref{ext}, we can hence realize the quotient variety as:

\centers{$ D(\U)^F \backslash \Xo \, \simeq \, \big\{(\bar u,h) \in \U \times \mathop{\prod}\limits_{\alpha \in \Delta} \U_\alpha \ \big| \ \bar u_\Delta  = \mathcal{L}_{\Delta}(h) \ \, \mathrm{and} \ {}^{w_0} \bar u \in \B w \B 
 \big\}. $}
 
 \noindent The restriction of this isomorphism to $D(\U)^F \backslash \X_\gamma$ gives the following description:
 
\centers{$ \begin{array}{r@{\ \, \simeq \ \,}l} D(\U)^F \backslash \X_\gamma & \big\{(\bar u,h) \in \U \times \mathop{\prod}\limits_{\alpha \in \Delta} \U_\alpha \ \big| \ \bar u_\Delta  = \mathcal{L}_{\Delta}(h) \ \, \mathrm{and} \ {}^{w_0} \bar u \in \Omega_\gamma  \big\} \\[10pt]
&  \big\{(\bar u,h) \in {}^{w_0} \Omega_\gamma \times \mathop{\prod}\limits_{\alpha \in \Delta} \U_\alpha \ \big| \ \bar u_\Delta  = \mathcal{L}_{\Delta}(h) \big\}. \end{array}  $}
 
 \noindent According to the parametrization of the cell $D_\gamma$ (see notations \ref{not}), every element $\bar u$ of ${}^{w_0} \Omega_\gamma$ can be uniquely written as
 
 \centers{$ \bar u \, = \, \displaystyle \prod_{\beta \in \Phi(\gamma)} u_{w_0(\beta)}(x_\beta) \ \in \U$}
 
 \noindent with the variables $x_\beta$ living in ${\G_a}$ or ${\G_m}$ whether $\beta$ is of the form  $\tilde \beta_i$ with $i$ belonging to $I(\gamma)$ or not. Now calculating $\bar u_\Delta$ (or equivalently $\bar u D(\U)$) amounts to keep only the positive simple roots occurring in this expression, that is the negative roots $\beta$ for which $w_0(\beta) \in \Delta$. On the other hand, for any element $h =  \big(u_{\alpha}(\zeta_\alpha)\big)_{\alpha \in\Delta}$ of $\prod_{\alpha \in \Delta} \U_\alpha$, one has
 
 \centers{$ \mathcal{L}_\Delta(h) \, = \, h^{-1} F(h) \, = \, \big(u_{\phi( \alpha)}(\zeta_{\alpha}^{q_\alpha^\circ}-\zeta_{\phi(\alpha)}) \big)_{\alpha \in \Delta}. $}

\noindent Therefore, the quotient variety $D(\U)^F \backslash \X_\gamma$ has a system of coordinates given by the two sets of variables $(x_\beta)$ and $(\zeta_\alpha)$ satisfying the relations

\centersright{$\forall \, \alpha \in \Delta \qquad \quad \zeta_{\alpha}^{q_\alpha^\circ}-\zeta_{\phi(\alpha)} \, = \hskip-2mm \SUM{w_0(\beta) = \phi(\alpha)}{}  \hskip-1.5mm x_\beta$.}{}

%\noindent From this set of identities, one can deduce that the value of the scalar $\zeta_\alpha$ is  uniquely determined by any $\zeta_{\delta}$ for $\delta$ a simple root in the orbit $\mathcal{O}_\alpha$ of $\alpha$ and the variables $(x_\beta)$. Moreover, one can check

\noindent Note that we can group the relations and the variables according to the class of the simple root $\alpha$ in $\Delta/\phi$ which is involved in the equations. One deduces that $D(\U)^F \backslash \X_\gamma$ decomposes into a product of varieties indexed by $[\Delta/\phi]$. Without any loss of generality, we can hence assume that $\phi$ acts transitively on $\Delta$.  In order to simplify the  notations, we fix a simple root $\alpha$ and we will denote $\zeta_{\phi^i(\alpha)}$ by $\zeta_i$ and $q_{\phi^i(\alpha)}^\circ$ by $q_i^\circ$. The relations defining $\X_\gamma$ can be then rewritten as 

\centers{$ \forall \, i = 1,\ldots, d \qquad \quad \zeta_{i} \, = \,  \zeta_{i-1}^{q_{i-1}^\circ} -  \hskip-2mm \SUM{w_0(\beta) = \phi^i(\alpha)}{}  \hskip-1.5mm x_\beta$}

\noindent It shows that the value of  $\zeta_{i}$ is  uniquely determined by $\zeta_{0}$ and the variables $(x_\beta)$. Moreover, by substitution, the last equation becomes 

\centers{$ \zeta_0^{q_\alpha} - \zeta_0 \, =  \hskip-0.5mm \SUM{w_0(\beta) = \alpha}{} x_\beta \ + \hskip-2.5mm \SUM{w_0(\beta) = \phi^{d-1}(\alpha)}{} \hskip-2mm x_\beta^{q_{d-1}^\circ}  \ + \ \cdots \ +\ \hskip-2mm \SUM{w_0(\beta) = \phi(\alpha)}{} \hskip-1.5mm x_\beta^{q_{d-1}^\circ \cdots q_2^\circ q_1^\circ}. $}

\noindent We should not forget that some of the variables $x_\beta$ are not involved in this equation, namely the ones for which $w_0(\beta)$ is not a simple root; they correspond to the factor $ (\G_a)^{\bar n(\gamma)} \times (\G_m)^{\bar m(\gamma)}$. Indeed, if we set $q_\beta = q_{d-1}^\circ \cdots q_i^\circ$ provided that $w_0(\beta) = \phi^i(\alpha)$ for $i \in \{1,\ldots,d\}$, we obtain the following description of the quotient variety $D(\U)^F \backslash \X_\gamma$:

\centers{$D(\U)^F \backslash \X_\gamma \, \simeq \,  (\G_a)^{\bar n(\gamma)} \times (\G_m)^{\bar m(\gamma)} \times \big\{(\zeta,(x_\beta)_{\beta \in -\Delta}) \ \big| \ \zeta^{q_\alpha} - \zeta = \sum x_\beta^{q_\beta} \big\}$.}

\noindent 
Finally, up to a new labelling of the variables $(x_\beta)_{\beta \in -\Delta}$ whether they belong to $\G_a$ or $\G_m$, the map $(x_\beta) \longmapsto (x_\beta^{q_\beta})$ induces the expected equivalence  of \'etale sites. 
%\centers{$D(\U)^F \backslash \X_\gamma  \cong  (\G_a)^{n(\gamma)} \times (\G_m)^{m(\gamma)} \times \X_{q_\alpha}(n_\alpha(\gamma),m_\alpha(\gamma))$.}
\end{proof}

%From now on, we assume that $\G$ is semi-simple and simply connected, and that $w$ is not contained in a proper $F$-stable standard parabolic subgroup of $W$. In that case, the Deligne-Lusztig varieties $\X(w)$ and $\Y(\dot w)$ are both irreducible (see prop ?? and ??).  

We want to lift the above description of $\X_\gamma$ up to the variety $\Y_\gamma$. Unfortunately, one cannot deduce directly from the previous proposition a parametrization of $\Y_\gamma$ since the quotient map by $\T^{wF}$ might not split. Following \cite{BR}, we shall nevertheless construct an abelian covering of $D(\U)^F \backslash \Y_\gamma$ which will have the expected shape.
\mk

\begin{notations} For the sake of coherence with the notations of the last proposition, we introduce the positive integers $m(\gamma) = r-|I(\gamma)|$ and $n(\gamma) = |I(\gamma) |- |J(\gamma)|$ so that the Deodhar cell is isomorphic to $(\G_a)^{n(\gamma)} \times (\G_m)^{m(\gamma)}$.   The product variety given in the same proposition will be simply denoted by $\X_\gamma'$.\end{notations}

\mk

Let us consider the pullback in $\Y_\gamma$ of a connected component of $U\backslash \Y_\gamma$ and denote it by $\Yg$. By construction, $\Yg$ is stable by $U$ and the quotient $\ub \Yg$ is connected (whereas $\Yg$ might not be). Besides, since $\ub \Y_\gamma / \T^{wF} \simeq \ub \X_\gamma$ is connected, the group $\T^{wF}$ permutes transitively the connected components of $\ub \Y_\gamma$. Therefore, if we define the group $H$ to be the stabilizer of $\ub \Yg$ in $\T^{wF}$,  then the multiplication induces the following $(\T^{wF})^{\mathrm{op}}$-equivariant isomorphism of varieties:

\centers{$ (\ub \Yg) \times_H \T^{wF} \, \simeq \, \ub \Y_\gamma$.}

\noindent On the other hand, the actions of $U$ and $\T^{wF}$ commute, so that we can also check that

\leftcentersn{}{
%$ \Yg \times_H \T^{wF} \, \simeq \, \Y_\gamma$ \qquad and \qquad 
$ (\db \Yg) \times_H \T^{wF} \, \simeq \, \db \Y_\gamma$.}{yo}

We now define the analog $\Y_\gamma'$ of $\X_\gamma'$ for the variety $\db \Yg$, which fits into the following commutative diagram, where all the squares are cartesian:

%We first consider the analog $\Y_\gamma'$ of $\X_\gamma'$ for the variety  $D(\U)^F \backslash \Y_\gamma$, which fits into the following diagram, where all the squares are cartesian:

%\sk

%\centers{$\xymatrix@C=1.5cm{\Y_\gamma' \ar[r]^{} \ar@{->>}[d] & D(\U)^F \backslash \Yg \ar@{->>}[d]  \ar@{->>}[r]  &  U \backslash \Yg  \ar@{->>}[d] \\ \X_\gamma' \ar[r] & D(\U)^F \backslash \X_\gamma  \ar@{->>}[r] & U \backslash \X_\gamma} $}

\sk 

\centers{$ \begin{psmatrix}[colsep=2cm] \Y_\gamma'  &  D(\U)^F \backslash \Yg &  U \backslash \Yg  \\ \X_\gamma'  &  D(\U)^F \backslash \X_\gamma &  U \backslash \X_\gamma  
\psset{arrows=->>,nodesep=3pt} 
\everypsbox{\scriptstyle} 
\ncline[arrows=<->]{1,1}{1,2}^{\hskip-4mm \mathrm{equ}}  \ncline{1,2}{1,3}  
\ncline[arrows=<->]{2,1}{2,2}^{\hskip-4mm \mathrm{equ}}  \ncline{2,2}{2,3}
\ncline{1,1}{2,1}<{\pi_w'}  \ncline{1,2}{2,2}<{\pi_w^\circ} \ncline{1,3}{2,3}<{\pi_w^\circ}  
 \end{psmatrix}$}

\sk

\noindent In this diagram and the following ones, the notation $\mathrm{equ}$ stands for an equivalence of \'etale sites. Note that the morphims $\pi_w'$ and $\pi_w^\circ$ are isomorphic to quotient maps by the right action of $H \subset \T^{wF}$, which is a $p'$-group. In particular, the map $\pi_w^\circ : \ub \Yg \longrightarrow (\G_a)^{n(\gamma)}  \times (\G_m)^{m(\gamma)}$ is a Galois covering which is tamely ramified. By Abhyankar's lemma (see \cite[Expos\'e XIII, 5.3]{SGA}) there exists a Galois covering $\varpi : (\G_a)^{n(\gamma)}  \times (\G_m)^{m(\gamma)} \longrightarrow \ub \Yg$ with group $N$ such that the composition $\pi_w^\circ \circ \varpi : (\G_a)^{n(\gamma)}  \times (\G_m)^{m(\gamma)} \longrightarrow (\G_a)^{n(\gamma)}  \times (\G_m)^{m(\gamma)}$ sends $(\mu_1, \ldots, \mu_{n(\gamma)},\lambda_1, \ldots, \lambda_{m(\gamma)})$ to $(\mu_1, \ldots, \mu_{n(\gamma)},\lambda_1^d, \ldots, \lambda_{m(\gamma)}^d)$ for some positive integer $d$ relatively prime to $p$. We summarize the different maps involved in the following diagram:

%\noindent By proposition ??, the map $\Y_\gamma' \longrightarrow  D(\U)^F \backslash \Y_\gamma$ is an equivalence of etale sites.

%\noindent Since the variety $U \backslash \X_\gamma$ is connected, the group $\T^{wF}$ acts transitively on the set of connected components of $U \backslash \X_\gamma$. Therefore, if we consider one particular component $Y_\gamma^\circ$ and if we denote by $H$ its stabilizer in $\T^{wF}$, the multiplication induces a $(\T^{wF})^{\mathrm{op}}$-equivariant isomorphism of varieties 
%: \centers
%{$  \Y_\gamma^\circ \times_H \T^{wF} \, \simeq \, U \backslash   \Y_\gamma.$} Now the map $\Y_\gamma^\circ \longrightarrow (\G_a)^{n(\gamma)} \times (\G_m)^{m(\gamma)}$ is a Galois covering of group $H$.

%\sk

\centers{$ \begin{psmatrix} [colsep=5.3mm] [mnodesize=0pt]  (\G_a)^{n(\gamma)} \times (\G_m)^{m(\gamma)}   \psspan{3} \\ & & U \backslash \Y_\gamma^\circ   \\ & & U \backslash \X_\gamma  & 
 \hskip -3.8mm \simeq  \ (\G_a)^{n(\gamma)} \times (\G_m)^{m(\gamma)}
\psset{arrows=->>,nodesep=3pt} 
\everypsbox{\scriptstyle} 
\ncline{1,1}{2,3}<{\varpi}>{/N} \ncline{2,3}{3,3}<{\pi_w^\circ}>{/H} \ncarc[arcangle=20]{1,1}{3,4}>{/ (\mathbf{\bbmu}_d)^{m(\gamma)}}
\end{psmatrix}$}

\noindent where $\mathbf{\bbmu}_d$ denotes the group of the $d$-th roots of unity in $\F$. In this setting, $N$ is a subgroup of $(\mathbf{\bbmu}_d)^{m(\gamma)}$ and we have a canonical group isomorphism $(\mathbf{\bbmu}_d)^{m(\gamma)} / N \simeq H$. If we form the fiber product of $\Y_\gamma'$ and $ (\G_a)^{n(\gamma)} \times (\G_m)^{m(\gamma)} $ above $\ub \Yg$, we obtain the following diagram whose squares are cartesian:

%\centers{$ \xymatrix@C=-20mm{  \big((\G_a)^{n(\gamma)} \times (\G_m)^{m(\gamma)} \big) \times_H \T^{wF} \ar@{->>}[d]  \ar@/^15pt/[ddr] \\ U \backslash \Y_\gamma  \ar@{->>}[d]  \\  \hskip 35mm U \backslash \X_\gamma  \, \simeq \, (\G_a)^{n(\gamma)} \times (\G_m)^{m(\gamma)}& \begin{array}{l}  {}^{}\end{array}    }$}

\sk

\leftcentersn{}{$ \hskip-3cm \begin{psmatrix}
[colsep=5.3mm]
[mnodesize=2cm]  \Y_\gamma''    & &  &  [mnodesize=0cm] 
 (\G_a)^{n(\gamma)} \times (\G_m)^{m(\gamma)}\psspan{3} 
 &  \\[0pt]  [mnodesize=2cm]  \Y_\gamma'    & &    [mnodesize=2.4cm] D(\U)^F \backslash \Y_\gamma^\circ & & &  U \backslash \Y_\gamma^\circ  
 \\[0pt] [mnodesize=2cm]  \X_\gamma'    & &   [mnodesize=2.4cm] D(\U)^F \backslash \X_\gamma & & & U \backslash \X_\gamma  & 
 [mnodesize=0pt] \hskip -3.8mm \simeq \ (\G_a)^{n(\gamma)} \times (\G_m)^{m(\gamma)}
\psset{arrows=->>,nodesep=3pt} 
\everypsbox{\scriptstyle} 
\ncline{1,4}{2,6}<{\varpi}>{/ N}  \ncarc[arcangle=25]{1,4}{3,7}>{/ (\mathbf{\bbmu}_d)^{m(\gamma)}}
\ncline[arrows=<->]{2,1}{2,3}^{\hskip-4mm \mathrm{equ}}  \ncline{2,3}{2,6}  
\ncline[arrows=<->]{3,1}{3,3}^{\hskip-4mm \mathrm{equ}}  \ncline{3,3}{3,6}
\ncline{1,1}{1,4} \ncline{1,1}{2,1}>{/N}
\ncline{2,1}{3,1}<{\pi_w'}  \ncline{2,3}{3,3}<{\pi_w^\circ} \ncline{2,6}{3,6}<{\pi_w^\circ}>{/H}  
\end{psmatrix}$}{diag}

\sk

%\centers{$ \xymatrix{  & & \big((\G_a)^{n(\gamma)} \times (\G_m)^{m(\gamma)} \big) \times_H \T^{wF} \ar@{->>}[d]  \ar@/^15pt/[dddr] \\ \Y_\gamma' \ar[r]^{} \ar@{->>}[d] & D(\U)^F \backslash \Y_\gamma \ar@{->>}[d]  \ar@{->>}[r]  & U \backslash \Y_\gamma  \ar@{->>}[d]  \\ \X_\gamma' \ar[r] & D(\U)^F \backslash \X_\gamma  \ar@{->>}[r] & U \backslash \X_\gamma  \\ & & &  (\G_a)^{n(\gamma)} \times (\G_m)^{m(\gamma)}    }$}

\noindent so that by definition of $\X_\gamma'$ and the properties of the Galois covering $\pi_w^\circ \circ \varpi$ we get a $\db U \times \big((\mathbf{\bbmu}_d)^{m(\gamma)}\big)^{\mathrm{op}}$-equivariant isomorphism of varieties:

\leftcentersn{}{$\Y_\gamma'' \, \simeq \, (\G_a)^{\bar n(\gamma)} \times (\G_m)^{\bar m(\gamma)} \displaystyle \prod_{\alpha \in [\Delta/\phi]} \Y_{q_\alpha,d}(n_\alpha(\gamma),m_\alpha(\gamma))$}{ypp}

\noindent where the factors on the right-hand side are defined by

\leftcenters{}{$  \Y_{q,s}(n,m) \, = \, \big\{(\zeta, \mu_1, \ldots, \mu_n, \lambda_1, \ldots, \lambda_m) \in (\G_a)^{n+1}  \times (\G_m)^m \ \big| \ \zeta^{q} - \zeta = \mathop{\sum}\limits_{i} \mu_i + \mathop{\sum}\limits_{j} \lambda_j^s \big\}$}

\noindent and endowed with a natural action $\F_q $ by translation on the first variable together with the action of $(\mathbf{\bbmu}_s)^m $ obtained by multiplication on $(\G_m)^m$.  Up to an equivalence of \'etale sites, we have performed in this way a construction of an abelian covering of the variety $\db \Y_\gamma^\circ$ which decomposes into a product of varieties. Using the results of \cite{BR}, we shall first compute the cohomology of each of these factors:

\begin{lem}\label{brl}Let $\psi$ be a non trivial character of $\F_q$. Then 

\centers{$ e_\psi \R (\Y_{q,s}(n,m),\Lambda) \, \simeq  \  \left\{ \hskip-1.3mm \begin{array}{l} \Lambda_\psi (\mathbf{\bbmu}_s)^m [-m] \ \ \text{if } n = 0\text{,}\\[4pt]
0 \ \ \text{otherwise} \end{array}\right.$}

\noindent in the derived category $\mathcal{D}^b(\Lambda \F_q$-$\mathrm{mod}$-$\Lambda (\mathbf{\bbmu}_s)^m)$.
\end{lem}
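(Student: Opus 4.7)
The strategy is to reduce the computation to elementary facts about Artin--Schreier and Kummer local systems. First I would observe that the projection
\[ \pi \colon \Y_{q,s}(n,m) \twoheadrightarrow (\G_a)^n \times (\G_m)^m \]
forgetting $\zeta$ is a finite \'etale $\F_q$-torsor obtained by pulling back the Lang isogeny $\mathcal{L} \colon \zeta \mapsto \zeta^q - \zeta$ along $f(\mu,\lambda) = \sum_i \mu_i + \sum_j \lambda_j^s$. Accordingly $\pi_!\Lambda$ decomposes as $\bigoplus_\theta f^*\mathcal{L}_\theta$ indexed by the characters $\theta$ of $\F_q$, where $\mathcal{L}_\theta$ denotes the Artin--Schreier sheaf on $\G_a$. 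Taking the $\psi$-isotypic part yields
\[ e_\psi \R(\Y_{q,s}(n,m), \Lambda) \;\simeq\; \R\bigl((\G_a)^n \times (\G_m)^m,\, f^*\mathcal{L}_\psi\bigr). \]

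Next, since $\psi$ is additive, $f^*\mathcal{L}_\psi$ is an external tensor product of $n$ copies of $\mathcal{L}_\psi$ on each $\G_a$-factor and $m$ copies of $h^*\mathcal{L}_\psi$ on each $\G_m$-factor, where $h \colon \G_m \to \G_a$ is $\lambda \mapsto \lambda^s$. By the K\"unneth formula (proposition~\ref{coho}.(i)), the cohomology above splits as the derived tensor product of the individual complexes $\R(\G_a, \mathcal{L}_\psi)$ and $\R(\G_m, h^*\mathcal{L}_\psi)$, equivariantly for the $(\mathbf{\bbmu}_s)^m$-action (which only moves the $\G_m$-factors). If $n \geq 1$ the whole complex vanishes: the translation action of $\F_q$ on $\G_a$ extends to an action of the connected group $\G_a$, so proposition~\ref{coho}.(iii) forces $\R(\G_a, \mathcal{L}_\psi) \simeq 0$. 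This handles the second case of the lemma.

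It remains to treat $n = 0$, where I must identify each factor $\R(\G_m, h^*\mathcal{L}_\psi)$ as a $\Lambda \mathbf{\bbmu}_s$-module. Here $h = (\cdot)^s \colon \G_m \to \G_m$ is a finite \'etale $\mathbf{\bbmu}_s$-Galois cover (Kummer, using that $s$ is coprime to $p$), and $h_*\Lambda \simeq \bigoplus_\chi \mathcal{K}_\chi$ decomposes over the characters $\chi$ of $\mathbf{\bbmu}_s$, with $\mathcal{K}_\chi$ the corresponding Kummer local system. The projection formula gives
\[ \R(\G_m, h^*\mathcal{L}_\psi) \;\simeq\; \R(\G_m, \mathcal{L}_\psi \otimes h_*\Lambda) \;\simeq\; \bigoplus_{\chi} \R(\G_m, \mathcal{L}_\psi \otimes \mathcal{K}_\chi), \]
with $\mathbf{\bbmu}_s$ acting on the $\chi$-summand through $\chi$. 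For $\chi = 1$, the excision triangle for $\G_a = \G_m \sqcup \{0\}$ together with the vanishing of $\R(\G_a, \mathcal{L}_\psi)$ yields $\R(\G_m, \mathcal{L}_\psi) \simeq \Lambda[-1]$. For non-trivial $\chi$, the tensor $\mathcal{L}_\psi \otimes \mathcal{K}_\chi$ is a non-trivial rank-one local system on the affine smooth curve $\G_m$, whose compactly supported cohomology is therefore concentrated in degree~$1$ and one-dimensional by the Grothendieck--Ogg--Shafarevich Euler characteristic formula---this is the classical Gauss-sum calculation, already exploited in \cite{BR} for the Coxeter case.

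Summing over $\chi$ gives $\R(\G_m, h^*\mathcal{L}_\psi) \simeq \Lambda\mathbf{\bbmu}_s[-1]$ (the regular representation, shifted by one); the derived tensor product over the $m$ factors then produces $\Lambda(\mathbf{\bbmu}_s)^m[-m]$, with $\F_q$ acting through $\psi$, which is the desired $\Lambda_\psi(\mathbf{\bbmu}_s)^m[-m]$. The only real obstacle is the computation of $\R(\G_m, \mathcal{L}_\psi \otimes \mathcal{K}_\chi)$ for non-trivial $\chi$: everything else is either formal (projection formula, K\"unneth) or follows from the vanishing of Artin--Schreier cohomology on $\G_a$, whereas the Kummer-twisted piece requires the genuine geometric input of concentration in middle degree and the rank-one Euler-characteristic calculation on $\G_m$.
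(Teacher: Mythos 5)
Your argument is correct for the case $n \geq 1$, and there the method is essentially the same as the paper's: one exhibits a $\G_a$-direct-factor on which $\F_q$ acts by translation (the paper does this by eliminating $\mu_1$ and keeping $\zeta$ as a free coordinate; you do it sheaf-theoretically by factoring $f^*\mathcal{L}_\psi$ as an external tensor product), and then proposition~\ref{coho}.(iii) kills the $\psi$-isotypic part.

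For the case $n = 0$, however, you take a genuinely different route. The paper writes $\Y_{q,s}(0,m)$ as the amalgamated product $\Y_{q,s} \times_{\F_q} \cdots \times_{\F_q} \Y_{q,s}$ of $m$ copies of the one-variable curve, converts this to a derived tensor product via formula~\ref{amalg}, and then simply quotes \cite[Lemma 3.6]{BR} for $e_\psi\R(\Y_{q,s},\Lambda) \simeq \Lambda_\psi\mathbf{\bbmu}_s[-1]$. You instead reprove the one-variable input from scratch: you push forward along the Kummer map $h=(\cdot)^s$, decompose $h_*\Lambda$ into Kummer local systems, and handle each summand $\mathcal{L}_\psi \otimes \mathcal{K}_\chi$ by concentration in degree one plus Grothendieck--Ogg--Shafarevich. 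This is a legitimate and more self-contained approach, essentially reproving the BR lemma rather than citing it.

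There is, though, a genuine gap in your version of the $n=0$ computation: the decomposition $h_*\Lambda \simeq \bigoplus_\chi \mathcal{K}_\chi$ over the characters of $\mathbf{\bbmu}_s$ in the category of $\Lambda$-sheaves requires $s = |\mathbf{\bbmu}_s|$ to be invertible in $\Lambda$, i.e.\ $\ell \nmid s$. The lemma carries no such hypothesis, and in its application $s = d$ is produced by Abhyankar's lemma, which only yields $d$ coprime to $p$, not to $\ell$. When $\ell \mid s$ the regular $\Lambda\mathbf{\bbmu}_s$-local system $h_*\Lambda$ is indecomposable modulo $\mathfrak{m}$ and does not split into rank-one summands, so your sum over $\chi$ does not exist and the Euler-characteristic argument, applied summand by summand, does not get off the ground. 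To fill this one should argue as in \cite{BR}: show that $e_\psi\R(\Y_{q,s},\Lambda)$ is concentrated in degree one and is a projective $\Lambda\mathbf{\bbmu}_s$-module (because $\mathbf{\bbmu}_s$ acts freely on $\Y_{q,s}$) whose rank is forced to be $1$ by the Euler characteristic over the residue field, and is therefore isomorphic to $\Lambda_\psi\mathbf{\bbmu}_s$, without ever decomposing $h_*\Lambda$.
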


\begin{proof} If $n \neq 0$, the equation defining $\Y_{q,s}(n,m)$ can be rewritten as 

\centers{$ \mu_1 \, = \, \zeta^q - \zeta - \displaystyle \mathop{\sum}\limits_{i=1}^m \lambda_i - \mathop{\sum}\limits_{j=2}^n \mu_j$}

\noindent so that $\Y_{q,s}(n,m) \simeq (\G_a)^n \times (\G_m)^m$, with an action of $\F_q$ on the first coordinate. Since the cohomology of the affine line is given by $\R(\G_a,\Lambda) \simeq \Lambda[-2]$ we get the result by the K\"unneth formula (see proposition \ref{coho}.(i)).

\sk

The variety $\Y_{q,s}(0,m)$ can be completely described by the curves $\Y_{q,s} = \Y_{q,s}(0,1)$ studied by Bonnaf\'e and Rouquier in \cite{BR} (see also \cite{Lau}): the map

\centers{$ \begin{array}{rcl} \Y_{q,s} \times \Y_{q,s} \times \cdots \times \Y_{q,s} & \longrightarrow & \Y_{q,s}(0,m) \\[4pt] \big((\zeta_1,\lambda_1), \ldots ,(\zeta_m,\lambda_m)\big) & \longmapsto & (\zeta_1 + \cdots + \zeta_m,\lambda_1, \ldots, \lambda_m) \end{array}$}

\noindent induces indeed the following $\F_q\times (\mathbf{\bbmu}_s)^m$-equivariant isomorphism of varieties:

\centers{$ \Y_{q,s} \times_{\F_q} \times \Y_{q,s} \times_{\F_q} \cdots \times_{\F_q} \Y_{q,s} \ \simeq \ \Y_{q,s}(0,m)$.}

\noindent To conclude, it remains to translate this isomorphism in the category $\mathcal{D}^ b(\Lambda \F_q$-$\mathrm{mod}$-$\Lambda (\mathbf{\bbmu}_s)^m)$; we can then deduce the result from the case $m= 1$ which was solved in \cite[Lemma 3.6]{BR}:

\vskip-3pt \centers{$ \begin{array}{r@{\, \ \simeq \, \ }l} e_\psi\R(\Y_{q,s}(0,m), \Lambda) & e_\psi\R(\Y_{q,s},\Lambda) {\ol}_{\Lambda \F_q} \cdots \ {\ol}_{\Lambda \F_q}e_\psi\R(\Y_{q,s}, \Lambda) \\[4pt]
& \Lambda_\psi \mathbf{\bbmu}_s[-1] {\ol}_{\Lambda \F_q} \cdots \ {\ol}_{\Lambda \F_q}\Lambda_\psi \mathbf{\bbmu}_s[-1] \ \simeq \ \Lambda_\psi (\mathbf{\bbmu}_s)^m[-m].\end{array}$}

\noindent the first quasi-isomorphism coming also from the fact that $e_\psi$ is an idempotent.
\end{proof}

We have now at our disposal all the ingredients we need to compute the isotypic part of a regular character in the cohomology of each variety $\Y_\gamma$:

\begin{prop} Let $\gamma$ be a distinguished subexpression of $w$ ending by 1, and $\psi : U \longrightarrow \Lambda^\times$ a $G$-regular linear character. Then

\centers{$ e_\psi \R (\Y_\gamma,\Lambda) \, \simeq  \  \left\{ \hskip-1.3mm \begin{array}{l} \Lambda \T^{wF} [-\ell(w)] \ \  \text{if } \gamma=(1,1,\ldots,1) \text{,} \\[4pt]
0 \ \ \text{otherwise} \end{array}\right.$}

\noindent in the derived category $\mathcal{D}^b(\mathrm{mod}$-$\Lambda \T^{wF})$.

\end{prop}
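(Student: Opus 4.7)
The plan is as follows. Since $\psi$ is trivial on $D(\U)^F$ and $e_\psi$ is central in $\Lambda U$, Proposition \ref{coho}.(ii) gives $e_\psi \R(\Y_\gamma,\Lambda) \simeq e_\psi \R(\db \Y_\gamma,\Lambda)$; the amalgamation formula (\ref{amalg}) applied to the isomorphism $\db \Y_\gamma \simeq (\db \Yg) \times_H \T^{wF}$ coming from (\ref{yo}) then yields $e_\psi \R(\Y_\gamma,\Lambda) \simeq \big(e_\psi \R(\db \Yg, \Lambda)\big) \ol_{\Lambda H} \Lambda \T^{wF}$. The \'etale equivalence $\db \Yg \cong \Y_\gamma'$ combined with the $N$-Galois covering $\Y_\gamma'' \to \Y_\gamma'$ of diagram (\ref{diag}) produces $\R(\Y_\gamma',\Lambda) \simeq \Lambda \ol_{\Lambda N} \R(\Y_\gamma'',\Lambda)$, so the whole computation is reduced to analysing $e_\psi \R(\Y_\gamma'',\Lambda)$ as a $\Lambda N$-$\Lambda H$-bimodule.

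By the K\"unneth formula applied to (\ref{ypp}), this splits as a tensor product where the $(\G_a)^{\bar n(\gamma)} \times (\G_m)^{\bar m(\gamma)}$ factor carries trivial $V_\alpha$-actions while each $\R(\Y_{q_\alpha,d}(n_\alpha(\gamma), m_\alpha(\gamma)),\Lambda)$ carries only the action of the single corresponding $V_\alpha$. Writing $e_\psi = \prod_\alpha e_{\psi_\alpha}$, Lemma \ref{brl} shows that $e_{\psi_\alpha}$ annihilates its factor as soon as $n_\alpha(\gamma) > 0$ (regularity forces $\psi_\alpha$ to be non-trivial), so $e_\psi \R(\Y_\gamma'',\Lambda) = 0$ whenever some $n_\alpha(\gamma)$ is strictly positive. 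The crucial combinatorial observation is then that this is the case for every distinguished $\gamma \in \Gamma_1$ with $\gamma \neq (1,\ldots,1)$: letting $i_0$ be the largest index with $\gamma_{i_0} = s_{i_0}$, the conditions $\gamma^r = 1$ and $\gamma_j = 1$ for $j > i_0$ force $\gamma^{i_0} = 1$, so $\tilde\beta_{i_0} = -\beta_{i_0}$ and $w_0(\tilde\beta_{i_0})$ is a positive simple root $\alpha^*$; moreover $i_0 \in I(\gamma)$ by construction, and $\gamma^{i_0} s_{i_0} = s_{i_0} > 1 = \gamma^{i_0}$ gives $i_0 \notin J(\gamma)$, so that $n_{\alpha^*}(\gamma) \geq 1$ and the vanishing claim follows.

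It remains to treat $\gamma = (1,\ldots,1)$. Here $I(\gamma) = J(\gamma) = \emptyset$, hence all $n_\alpha$ vanish; moreover $\tilde\beta_i = -\beta_i$ for every $i$, so $w_0(\tilde\beta_i)$ is always a positive simple root, forcing $\bar n(\gamma) = \bar m(\gamma) = 0$ and $\sum_\alpha m_\alpha(\gamma) = r = \ell(w)$. Lemma \ref{brl} then produces $e_\psi \R(\Y_\gamma'',\Lambda) \simeq \Lambda_\psi (\mathbf{\bbmu}_d)^r[-r]$, on which $N \subset (\mathbf{\bbmu}_d)^r$ acts by translation; taking $N$-coinvariants through $(\mathbf{\bbmu}_d)^r/N \simeq H$ gives $e_\psi \R(\Y_\gamma',\Lambda) \simeq \Lambda_\psi H [-r]$, and the final amalgamation with $\Lambda\T^{wF}$ over $\Lambda H$ yields the induced module, which, forgetting the $U$-action carried by the $\Lambda_\psi$-twist, is exactly $\Lambda \T^{wF}[-\ell(w)]$ as an object of $\mathcal{D}^b(\mathrm{mod}\text{-}\Lambda \T^{wF})$. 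The main delicate point in executing this plan is tracking $(D(\U)^F\backslash U)$-equivariance through the non-canonical Galois cover $\Y_\gamma'' \to \Y_\gamma'$, so that the K\"unneth decomposition respects the idempotent $e_\psi$ in the expected way; once this compatibility is granted, the combinatorial claim and Lemma \ref{brl} carry the rest of the argument.
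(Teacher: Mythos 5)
Your proof is correct and follows the same route as the paper: reduce by $D(\U)^F$, pass through the amalgamated product with $\T^{wF}$, use the étale equivalence and the Galois cover $\Y_\gamma''\to\Y_\gamma'$ of diagram (\ref{diag}), apply K\"unneth to the splitting (\ref{ypp}), and invoke Lemma \ref{brl}. The one small variation is your choice of $i_0=\max I(\gamma)$ rather than the paper's $\max\bigl(I(\gamma)\setminus J(\gamma)\bigr)$ in the combinatorial vanishing step; these are in fact the same index (you show the former lies in $I(\gamma)\setminus J(\gamma)$, so it is automatically the maximum of that set), but your phrasing makes the deduction $\gamma^{i_0}=1$ immediate from $\gamma^r=1$, which is slightly cleaner than the paper's presentation.
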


\begin{proof} Since $\psi$ is trivial on $D(\U)^F$, we can argue as in the proof of the corollary \ref{cor1} to show that it is sufficient to prove the result for the quotient variety $\db \Y_\gamma$. Furthermore,  this variety can be expressed by formula \ref{yo} as an amalgamated product with $\T^{wF}$ above $H$ so that by formula \ref{amalg} one obtains

\vskip -7pt\centers{$  \R(\db \Y_\gamma, \Lambda) \, \simeq \, \R(\db \Y_\gamma^\circ, \Lambda) \ {\ol}_{\Lambda H} \, \T^{wF}.$}

\noindent We continue the reductions: from the commutative diagram \ref{diag}, we have an equivalence of \'etale sites between $\db \Yg$ and $\Y_\gamma'$ and an isomorphism between $\Y_\gamma'$ and $\Y_\gamma'' /N$, both being $U\times H^{\mathrm{op}}$-equivariant.  Together, they induce the following quasi-isomorphism:

\vskip -5pt \centers{$ \R(\db \Yg, \Lambda) \, \simeq \, \R(\Y_\gamma'', \Lambda) \ {\ol}_{\Lambda N} \, \Lambda$.}

\noindent Since $H \simeq (\mathbf{\bbmu}_d)^m/N$, we deduce by an adjunction formula that 

\centers{$\begin{array}{r@{\ \, \simeq \ \, }l}  \R(\db \Y_\gamma, \Lambda) &  \big(\R(\Y_\gamma'', \Lambda) \ {\ol}_{\Lambda N} \, \Lambda \big) \ {\ol}_{\Lambda H} \, \Lambda \T^{wF} \\[4pt]
& \big(\R(\Y_\gamma'', \Lambda) \ {\ol}_{\Lambda (\mathbf{\bbmu}_d)^m} \, \Lambda H \big) \ {\ol}_{\Lambda H} \, \Lambda \T^{wF} \\[4pt]
 \R(\db \Y_\gamma, \Lambda)  &  \R(\Y_\gamma'', \Lambda) \ {\ol}_{\Lambda (\mathbf{\bbmu}_d)^m} \, \Lambda \T^{wF} \end{array} $}

\noindent Denote by $\psi_\alpha$ the restriction of $\psi$ to the group $V_\alpha$. The decomposition of the variety $\Y_\gamma''$ (see formula \ref{ypp}) translates, by the K\"unneth formula, into the following quasi-isomorphisms:

\vskip -1pt
\centers{ $\begin{array}{r@{\, \ \simeq \, \ }l} e_\psi \R(\Y_\gamma'', \Lambda) & \R\big( (\G_a)^{\bar n(\gamma)} \times (\G_m)^{\bar m(\gamma)}, \Lambda\big) \ \ol \ \displaystyle e_\psi \R\Big(\hskip -3mm \prod_{\alpha \in [\Delta/\phi]} \hskip-2mm \Y_{q_\alpha,d}(n_\alpha(\gamma),m_\alpha(\gamma)), \Lambda\Big) \\[10pt]
& \R\big( (\G_a)^{\bar n(\gamma)} \times (\G_m)^{\bar m(\gamma)}, \Lambda\big) \ \ol \ \Big( \hskip-3mm \displaystyle \bigotimes_{\alpha \in [\Delta/\phi]}^{\mathrm{L}} \hskip-2mm e_{\psi_\alpha} \R(\Y_{q_\alpha,d}(n_\alpha(\gamma),m_\alpha(\gamma)), \Lambda) \Big) \end{array}$}

\noindent Since $\psi$ is a regular character, every restriction $\psi_\alpha$ is a non trivial character of $\F_{q_\alpha}$. It follows from the lemma \ref{brl} that the complex $e_\psi \R(\db \Y_\gamma, \Lambda)$ is quasi-isomorphic to zero as soon as one of the $n_\alpha(\gamma)$ is different from zero. But for $\gamma \neq (1,1,\ldots,1)$, the set $I(\gamma)\setminus J(\gamma)$ is non-empty, and if $i_0$ denotes its largest element, then $\gamma^{i_0} = 1$ and $\gamma^{i_0-1}$ is a simple reflection $s_\alpha$ for some positive simple root $\alpha \in \Delta$. Therefore, $n_{-w_0(\alpha)}(\gamma) \neq 0$ and $e_\psi \R(\Y_\gamma'', \Lambda) \simeq 0$. 

\sk

If $\gamma = (1,1, \ldots, 1)$, then the invariants $\bar m(\gamma), \bar n(\gamma)$ and $n_\alpha(\gamma)$ are all equal to zero, and by lemma \ref{brl}, one has

\vskip -6pt\centers{$ \begin{array}{r@{\, \ \simeq \, \ }l} e_\psi \R(\db \Y_\gamma, \Lambda) & \Big( \hskip-3mm \displaystyle \bigotimes_{\alpha \in [\Delta/\phi]}^{\mathrm{L}} \hskip-2mm e_{\psi_\alpha} \R(\Y_{q_\alpha,d}(0,m_\alpha(\gamma)),\Lambda) \Big) {\ol} _{\Lambda (\mathbf{\bbmu}_d)^{m(\gamma)}} \Lambda \T^{wF}  \\[12pt] &
 \Big( \hskip-3mm \displaystyle \bigotimes_{\alpha \in [\Delta/\phi]}^{\mathrm{L}} \hskip-2mm \Lambda_{\psi_\alpha} (\mathbf{\bbmu}_d)^{m_\alpha(\gamma)} [-m_\alpha(\gamma)]\big) {\ol} _{\Lambda (\mathbf{\bbmu}_d)^{m(\gamma)}} \Lambda \T^{wF} \\[12pt] &  \Lambda_\psi (\mathbf{\bbmu}_d)^{m(\gamma)}[-m(\gamma)] {\ol} _{\Lambda (\mathbf{\bbmu}_d)^{m(\gamma)}} \Lambda \T^{wF} \\[10pt]
 e_\psi \R(\db \Y_\gamma, \Lambda) & \Lambda_\psi \T^{wF}[-m(\gamma)] 
\end{array}$}

\noindent which give the conclusion since $m(\gamma) = \ell(w)$ for this particular subexpression.
 \end{proof}

By combining the last proposition and the filtration property (see lemma \ref{filt}), we can finally state the second part of the result expected in \cite[Conjecture 2.7]{BR}, which finishes the proof of theorem \ref{br}.

\begin{cor}\label{cor2}Let $\psi : U \longrightarrow \Lambda^\times$ be a $G$-regular linear character. Then

\centers{$ e_\psi \R (\Yo,\Lambda) \, \simeq  \, \Lambda_\psi \T^{wF} [-\ell(w)] $}

\noindent in the derived category $\mathcal{D}^b(\Lambda U$-$\mathrm{mod}$-$\Lambda \T^{wF})$.
\end{cor}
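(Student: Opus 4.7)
The plan is to assemble the cohomology of $\Yo$ from its pieces $\Y_\gamma$ by applying the open-closed distinguished triangles attached to a suitable filtration and using the preceding proposition to annihilate all but one of them.

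Concretely, I would first apply Lemma \ref{filt} to $\B w \cdot \B \cap \B^- \cdot \B$ with $v = 1$. This produces a numbering $\Gamma_1 = \{\gamma_0, \gamma_1, \ldots, \gamma_n\}$, in which $\gamma_0 = (1,1,\ldots,1)$ corresponds to the maximal cell, together with a chain of closed subvarieties $F_0 \supset F_1 \supset \cdots \supset F_n$ such that $F_i \setminus F_{i+1} = \Omega_{\gamma_i}$ for all $i$. Pulling this filtration back along $u \in \U \mapsto {}^{w_0}(u^{-1}F(u))$ yields a chain of closed $U$-stable subvarieties of $\Xo$, and a further pull-back along $\pi_w : \Yo \twoheadrightarrow \Xo$ gives a filtration
$$\Yo \, = \, \tilde F_0 \, \supset \, \tilde F_1 \, \supset \, \cdots \, \supset \, \tilde F_n \, = \, \Y_{\gamma_n}$$
by closed subvarieties stable under the bi-action of $U$ and $\T^{wF}$, with successive open strata $\tilde F_i \setminus \tilde F_{i+1} = \Y_{\gamma_i}$.

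For each $i = 0, \ldots, n-1$, the open-closed decomposition will yield a distinguished triangle in the derived category $\mathcal{D}^b(\Lambda U\text{-}\mathrm{mod}\text{-}\Lambda \T^{wF})$:
$$\R(\Y_{\gamma_i}, \Lambda) \, \longrightarrow \, \R(\tilde F_i, \Lambda) \, \longrightarrow \, \R(\tilde F_{i+1}, \Lambda) \, \longrightarrow \, .$$
Since $|U|$ is invertible in $\Lambda$, the central idempotent $e_\psi$ of $\Lambda U$ gives an exact functor on $\Lambda U$-modules and therefore preserves distinguished triangles. By the preceding proposition, $e_\psi \R(\Y_{\gamma_i}, \Lambda) \simeq 0$ for every $i \geq 1$; a short reverse induction starting from $e_\psi \R(\tilde F_n, \Lambda) = e_\psi \R(\Y_{\gamma_n}, \Lambda) \simeq 0$ then gives $e_\psi \R(\tilde F_i, \Lambda) \simeq 0$ for all $i \geq 1$. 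The remaining triangle at $i = 0$ collapses to an isomorphism $e_\psi \R(\Yo, \Lambda) \simeq e_\psi \R(\Y_{\gamma_0}, \Lambda)$, which is $\Lambda_\psi \T^{wF}[-\ell(w)]$ again by the preceding proposition.

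The proof is essentially formal once the cohomology of each individual piece is known, so no genuine obstacle remains. The only point needing a small bookkeeping check is that the filtration and its associated triangles really live in the bimodule derived category $\mathcal{D}^b(\Lambda U\text{-}\mathrm{mod}\text{-}\Lambda \T^{wF})$. This follows from two observations: the map $u \mapsto {}^{w_0}(u^{-1}F(u))$ is invariant under the left action of $U$ on $\U$ (its fibers are precisely $U$-orbits), so the pull-backs of the $F_i$ are automatically $U$-stable; and $\pi_w$ is the quotient by the right action of $\T^{wF}$ and is $G$-equivariant, which transports the equivariance to the $\tilde F_i$ and to all the triangles above.
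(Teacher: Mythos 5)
Your proposal is correct and takes essentially the same route as the paper: the paper itself records that the family $(\Y_\gamma)$ satisfies the filtration property of Lemma \ref{filt} and then derives the corollary by ``combining the last proposition and the filtration property,'' which is precisely the open-closed triangle argument you write out. The details you supply — exactness of $e_\psi(-)$, $U$-stability of the strata from the left-invariance of the Lang map, and $\T^{wF}$-stability from $\pi_w$ being the quotient map — are the intended, routine checks the paper leaves implicit.
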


\begin{rmk}\ With the results of \cite{BR}, we can actually compute the cohomology of each variety $\db \Y_\gamma$, and not only the $\psi$-isotypic part. However, except in some very special cases, it does not lead to a complete description of the complex for the variety $\db \Yo$ since it involves highly non-trivial triangles in the category $\mathcal{D}^b(\Lambda U$-$\mathrm{mod}$-$\Lambda \T^{wF})$.\end{rmk}

\end{document}